\newcommand{\NN}{\mathbb N} 
\newcommand{\RR}{\mathbb R}
\newcommand{\ml}{\mathcal{L}}
\newcommand{\mb}{\mathcal{B}}
\newcommand{\mi}{\mathcal{I}}
\newcommand{\md}{\mathcal{D}}
\newcommand{\ms}{\mathcal{S}}
\newcommand{\mf}{\mathcal{F}}
\definecolor{cadmiumgreen}{rgb}{0.0, 0.42, 0.24}
\newtheorem{theorem}{Theorem}[section]
\newtheorem{corollary}[theorem]{Corollary}
\newtheorem{lemma}[theorem]{Lemma}
\newtheorem{proposition}[theorem]{Proposition}
\numberwithin{equation}{section}
\begin{document}
\title{Mass threshold  for infinite-time blowup in a chemotaxis model with splitted population} 
\author{Philippe Lauren\c{c}ot}
\address{Institut de Math\'ematiques de Toulouse, UMR~5219, Universit\'e de Toulouse, CNRS, F--31062 Toulouse Cedex 9, France}
\email{laurenco@math.univ-toulouse.fr}
\author{Christian Stinner}
\address{Technische Universit\"at Darmstadt, Fachbereich Mathematik, Schlossgartenstr.~7, D--64289 Darmstadt, Germany}
\email{stinner@mathematik.tu-darmstadt.de}
\keywords{Chemotaxis system, species with two subpopulations, global solutions, critical mass, infinite-time blowup}
\subjclass[2010]{35B40, 35B44, 35M33, 35K10, 35Q92, 92C17}
\date{\today}
%
\begin{abstract}
We study the chemotaxis model 
\begin{align*}
\partial_t u & = \mathrm{div}(\nabla u - u \nabla w) + \theta v -u &\mbox{in } (0,\infty) \times \Omega, \\
\partial_t v & = u - \theta v & \mbox{in } (0,\infty) \times \Omega,\\
\partial_t w & = D \Delta w - \alpha w + v & \mbox{in } (0,\infty) \times \Omega, 
\end{align*}
with no-flux boundary conditions in a bounded and smooth domain $\Omega \subset \RR^2$, where $u$ and $v$ represent the densities of subpopulations of moving 
and static individuals of some species, respectively, and $w$ the concentration of a chemoattractant. We prove that, in an appropriate functional setting, 
all solutions exist globally in time. Moreover, we establish the existence of a critical mass $M_c>0$ of the whole population $u+v$ such that,  
for $M \in (0, M_c)$, any solution is bounded, while, for almost all $M > M_c$, there exist solutions blowing up in infinite time. 
The building block of the analysis is  the construction of a Liapunov functional. As far as we know, this is the first result of this kind when the 
mass conservation includes the two subpopulations and not only the moving one.
\end{abstract}

\maketitle

%
%
\pagestyle{myheadings}
\markboth{\sc{Ph.~Lauren\c cot \& C.~Stinner}}{Mass threshold in a chemotaxis model}

\section{Introduction}

We investigate the dynamics of a chemotaxis model describing the space and time evolution of a species including moving and static individuals, as well as that of a chemoattractant produced by the latter. More precisely, on the one hand, the motion of moving individuals is due to diffusion with a bias towards regions of high concentrations of the chemoattractant. On the other hand, the chemoattractant is produced only by the static individuals, while its spatial fluctuations result from standard diffusion. Finally, the total population in the species is assumed to be constant throughout time evolution, with a linear exchange between the two subpopulations. Denoting the densities of moving and static individuals by $u$ and $v$, respectively, and the concentration of chemoattractant by $w$, the mathematical model reads, after a suitable rescaling of the parameters,
\begin{subequations}\label{aPL}
\begin{align}
\partial_t u & = \mathrm{div}(\nabla u - u \nabla w) + \theta v -u &\mbox{in } (0,\infty) \times \Omega, \label{a1PL} \\
\partial_t v & = u - \theta v & \mbox{in } (0,\infty) \times \Omega, \label{a2PL} \\
\partial_t w & = D \Delta w - \alpha w + v & \mbox{in } (0,\infty) \times \Omega, \label{a3PL}
\end{align}
supplemented with no-flux boundary conditions
\begin{equation}
\nabla u\cdot \mathbf{n} = \nabla w\cdot \mathbf{n} =0 \qquad\mbox{on } (0,\infty) \times \partial \Omega \label{bPL}
\end{equation}
\end{subequations}
and initial conditions
\begin{equation}\label{iPL}
  (u,v,w)(0) = (u_0,v_0,w_0) \qquad\mbox{in } \Omega.
\end{equation}
We assume that $\Omega \subset \mathbb{R}^2$ is a bounded domain with smooth boundary and the constants $D$, $\alpha$, and $\theta$ are positive.

The system \eqref{aPL} can be seen as a particular case or a variant of chemotaxis models derived in \cite{De1977, PMW1999, SSU2016, STP2013, WP1998}, with different interpretations of the species and its two subpopulations. In \cite{De1977}, it is some building material (such as soil) which is, either carried by insects, or deposited on the ground. Proliferation of cancer cells is considered in \cite{SSU2016}, separating migrating cells from proliferating cells, while the spreading  of mountain pine beetles is studied in \cite{PMW1999, STP2013, WP1998}, dividing the population into flying and nesting beetles. 

As far as mathematical analysis is concerned, the model introduced in \cite{De1977}, which also is a simplified variant of the models in \cite{PMW1999, STP2013, WP1998}, is the subject of a number of analytical results dealing with the global existence of solutions and the asymptotic behavior of bounded solutions, see, e.g., \cite{HT2016, Li2020, NNOU2020, TLM2019}. In all these results, instead of the splitting term $\theta v -u$ in \eqref{a1PL}, the corresponding models contain in the taxis equation a dissipative term $f(u)$ depending only on $u$ generalizing the prototype $f(u) = 1 - \mu u$. In case of $f \equiv 0$, a critical mass phenomenon for global solutions is proved in \cite{La2019, TW2017}. For the model developed in \cite{SSU2016}, which includes a splitting term similar to $\theta v -u$  in the taxis equation, the global existence of solutions is proved in \cite{SSU2016}, while further results concerning their large time behavior are lacking.

To the best of our knowledge, our results concerning the behavior of solutions to \eqref{aPL}--\eqref{iPL} seem to be the first going beyond global existence 
for a chemotaxis model involving a species divided into moving and static individuals and containing in the taxis equation a splitting term depending on both subpopulations. 

Our first result states the global existence and well-posedness for \eqref{aPL}--\eqref{iPL} in an appropriate functional setting. To this end, for $r\in (1,\infty)$, we set
\begin{align}\label{in1}
& W^m_{r, \mb}(\Omega) := \left\{ z \in W^m_r (\Omega) \: : \nabla z \cdot \mathbf{n} =0 \mbox{ on } \partial\Omega \right\} \quad\text{if}\quad 1 + \frac{1}{r} < m \le 2, \nonumber \\
& W^m_{r, \mb}(\Omega) := W^m_r (\Omega) \quad\text{if}\quad -1 + \frac{1}{r} < m < 1 + \frac{1}{r}, \\
& W^m_{r, \mb}(\Omega) := W^{-m}_{r/(r-1)} (\Omega)^\prime \quad\text{if}\quad -2 + \frac{1}{r} < m \le -1 + \frac{1}{r}, \nonumber 
\end{align}
and
\begin{equation}\label{in2}
W^m_{r, \mb, +}(\Omega) := \left\{ z \in W^m_{r, \mb} (\Omega) \: : z \ge 0 \mbox{ in } \Omega \right\}, 
\end{equation}
where $W_r^m(\Omega)$, $m\in [0,\infty)$, $r\in [1,\infty)$, denote the usual Sobolev spaces, see \cite[Section~5]{Am1993}. 

\begin{theorem}\label{theo1.1}
  Let $M>0$ and $(u_0,v_0,w_0) \in W_{3,+}^1(\Omega;\mathbb{R}^3)$ satisfying 
  	\begin{equation}
  	\|u_0+v_0\|_{L_1(\Omega)} = M. \label{mass0}
  	\end{equation}
  	Then the system \eqref{aPL}--\eqref{iPL} has a unique nonnegative weak solution $(u,v,w)$ in $W^1_3$ defined on $[0,\infty)$ satisfying
  \begin{align*}
    & u \in C \left([0,\infty);W^{1}_{3,+}(\Omega) \right) \cap C^1 \left( [0,\infty); W^1_{3/2} (\Omega ; \RR^2)^\prime \right), \\
    & v \in C^1([0,\infty);W^1_{3,+}(\Omega)), \\
    & w \in C \left([0,\infty);W^{1}_{3,+}(\Omega) \right) \cap C^1 \left( [0,\infty); W^1_{3/2} (\Omega ; \RR^2)^\prime \right),
  \end{align*}
  and
  \begin{equation}\label{mass}
    \| (u+v)(t) \|_{L_1(\Omega)} = M, \qquad t \ge 0.
  \end{equation}
  Moreover,
  \begin{align*}
  & u \in C \left((0,\infty);W^{2}_{3, \mb}(\Omega) \right) \cap C^1 \left( (0,\infty); L_3 (\Omega) \right) \, , \\
  & w \in C \left((0,\infty);W^{2}_{3, \mb}(\Omega) \right) \cap C^1 \left( (0,\infty); L_3 (\Omega) \right) \, .
  \end{align*}
\end{theorem}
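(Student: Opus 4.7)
The plan is to exploit the triangular parabolic--ODE structure of \eqref{aPL}. Integrating \eqref{a2PL} explicitly,
\begin{equation*}
v(t) = v_0 e^{-\theta t} + \int_0^t e^{-\theta(t-s)} u(s) \, ds,
\end{equation*}
reduces the problem to a coupled parabolic system for $(u,w)$ in which $v$ enters as a nonlocal-in-time, globally Lipschitz functional of $u$. I would then invoke Amann's framework \cite{Am1993} for quasilinear parabolic systems in $W^1_r$ with $r=3$ to obtain a unique maximal weak solution $(u,v,w)$ on some interval $[0, T_{\max})$ enjoying the stated continuity properties, together with a standard extension criterion: if $T_{\max} < \infty$, then $\|(u,v,w)(t)\|_{W^1_3}$ becomes unbounded as $t \uparrow T_{\max}$.

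\textbf{Step 2: Qualitative properties and prevention of finite-time blowup.} Nonnegativity of $u$ follows from the parabolic maximum principle applied to \eqref{a1PL} with nonnegative source $\theta v$; then $v \ge 0$ follows from the Duhamel representation above, and $w \ge 0$ from \eqref{a3PL}. Adding \eqref{a1PL} and \eqref{a2PL} and integrating using the no-flux boundary condition yields $\frac{d}{dt} \int_\Omega (u+v)\,dx = 0$, giving \eqref{mass}. The heart of the argument is to show $T_{\max} = \infty$, for which I would construct a Liapunov functional combining an entropy term $\int_\Omega u \log u \, dx$, a cross term $-\int_\Omega (u+v) w \, dx$, and quadratic terms in $w$ and $\nabla w$, with coefficients tuned so that its time derivative is controlled by a nonpositive dissipation. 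The essential feature to exploit is the two-step production $u \to v \to w$: the time convolution makes $w$ a smoothed version of the history of $u$, which should provide just enough regularization to rule out \emph{finite-time} concentration at any mass, even though the same mechanism allows infinite-time blowup for large $M$. Combining this Liapunov estimate with the 2D Trudinger--Moser inequality and mass conservation \eqref{mass} should yield an $L\log L$ bound on $u$ on every finite interval, which bootstraps via Neumann heat semigroup estimates and Moser-type iteration to $L_p$ bounds for all $p$, and then to the $W^1_3$ bounds needed to contradict the extension criterion.

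\textbf{Step 3: Improved regularity.} Once finite-time blow-up is excluded and $L_\infty$ bounds are secured on each bounded time interval, maximal $L_3$-regularity applied to the decoupled linear parabolic equations for $u$ and $w$ (treating $v$ as a known $W^1_3$ datum and $\nabla w$ as a coefficient in the $u$-equation) upgrades the solution to $u, w \in C((0,\infty); W^2_{3, \mb}) \cap C^1((0,\infty); L_3)$, matching the final claim. The main obstacle throughout is the calibration of the Liapunov functional in Step 2: because the same model admits infinite-time blowup above a critical mass, the functional must extract enough dissipation to prevent finite-time concentration without forcing boundedness in the supercritical regime; carefully exploiting the regularizing lag provided by the $u \to v \to w$ chain is therefore the delicate part of the argument.
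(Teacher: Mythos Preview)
Your Step~2 contains a genuine gap: the Trudinger--Moser route you propose for the $L\log L$ bound \emph{does not close for supercritical mass}. The standard use of Trudinger--Moser in this context is to bound $-M\ln\|e^w\|_{L_1}$ from below by $-\frac{M}{8\pi}\|\nabla w\|_{L_2}^2 - C$, which, combined with the quadratic term $\frac{(1+\theta)D}{2}\|\nabla w\|_{L_2}^2$ in the Liapunov functional, yields a coercive lower bound only when $M < 4\pi(1+\theta)D$. For $M$ above this threshold the coefficient of $\|\nabla w\|_{L_2}^2$ is negative and no $L\log L$ control on $u$ follows, on finite time intervals or otherwise. This is not a technicality: the paper uses exactly this Trudinger--Moser argument in Section~\ref{sec4}, and it is restricted there to the subcritical regime for precisely this reason. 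Since Theorem~\ref{theo1.1} asserts global existence for \emph{all} $M>0$, your mechanism cannot supply the a~priori bound.

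What the paper does instead (Lemma~\ref{lem3.3}) is to work with a modified energy $Y$ that omits the cross term $-\int_\Omega (u+v)w$ and instead tracks $\|L(u)\|_{L_1}+\|L_\theta(v)\|_{L_1}$ together with $\|\partial_t w\|_{L_2}^2$, $\|\nabla w\|_{L_2}^2$, $\|w\|_{L_2}^2$, and $\|v\|_{L_2}^2$. Differentiating and using \eqref{a3PL} to replace $-D\int_\Omega u\,\Delta w$ by $\int_\Omega u(v-\alpha w-\partial_t w)$, the sign of $\alpha w$ allows one to drop it and reduce the bad term to $2\int_\Omega uv$, which is controlled via Gagliardo--Nirenberg on $\sqrt{u}$ and absorbed into the dissipation $\|\nabla\sqrt{u}\|_{L_2}^2$. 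This yields a Gronwall inequality with bounds growing in $T$ but valid for every mass, which is exactly what is needed to exclude finite-time blowup without accidentally proving boundedness in the supercritical regime. The bootstrap to $L_p$, $L_\infty$, and $W^1_3$ then proceeds as you outline.

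Two smaller points. Your nonnegativity argument in Step~2 is circular as written: you need $v\ge 0$ to apply the maximum principle to \eqref{a1PL}, but your Duhamel formula for $v$ requires $u\ge 0$ first. The paper resolves this by testing \eqref{a1PL} and \eqref{a2PL} simultaneously with the negative parts of $u$ and $v$ and closing a joint differential inequality. And in Step~1, substituting the Duhamel formula for $v$ into the $(u,w)$ system produces a problem that is nonlocal in time, which does not fit directly into Amann's framework for quasilinear parabolic systems; the paper instead keeps $v$ as an ODE component and applies Amann's theory for partially diffusive systems \cite{Am1991a}.
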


We next establish a critical mass phenomenon for \eqref{aPL}--\eqref{iPL}. More precisely, we show the existence of a critical mass $M_c >0$,
where $M_c = 4\pi(1+\theta)D$ in the general case and $M_c = 8\pi(1+\theta)D$ in the radial setting in a ball, such that all solutions are bounded 
if the initial mass $M$ satisfies $M < M_c$, while solutions blowing up in infinite time exist for almost all $M > M_c$.

We begin with the statement of the boundedness result for $M$ being subcritical.

\begin{theorem}\label{theo1.2}
 Let $M>0$ and consider $(u_0,v_0,w_0) \in \mi_{M}$, where
 \begin{equation}\label{in}
 \mi_{M} := \left\{ (u,v,w) \in W^{1}_{3,+}(\Omega) \times W^1_{3,+}(\Omega) \times W^2_{2, +}(\Omega) \: : \: \| u+v \|_{L_1 (\Omega)} = M \right\}.
 \end{equation}
 By $(u,v,w)$ we denote the solution to \eqref{aPL}--\eqref{iPL}
	given by Theorem~\ref{theo1.1}.
	\begin{enumerate}
	  \item[(a)] If $M = \| u_0+v_0 \|_{L_1(\Omega)} \in (0,4\pi(1+\theta)D)$, then
		  \begin{equation}\label{e1.2.1}
			  \sup\limits_{t \ge 0} \left\{ \| u(t)\|_{L_\infty (\Omega)} + \| v(t)\|_{L_\infty (\Omega)} + \| w(t) \|_{L_\infty (\Omega)} \right\} < \infty.
			\end{equation}
	  \item[(b)] If $\Omega = B_R(0)$ is the ball of radius $R>0$ centered at $x=0$, $(u_0,v_0,w_0)$ are radially symmetric, and 
		  $M = \| u_0+v_0 \|_{L_1(\Omega)} \in (0,8\pi(1+\theta)D)$, then \eqref{e1.2.1} is also satisfied.
	\end{enumerate}
\end{theorem}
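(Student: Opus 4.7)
The plan is to mimic the entropy-energy strategy of the classical Keller-Segel system, adapted to the present model with splitting, and to combine it with the Moser-Trudinger inequality. The numerics of the critical mass is already indicative: at formal equilibrium of \eqref{a2PL} one has $v = u/\theta$, hence $\int_\Omega v = M/(1+\theta)$, so the Keller-Segel threshold $\int v < 4\pi D$ associated with \eqref{a3PL} translates into $M < 4\pi(1+\theta) D$; the radial bound $M_c = 8\pi(1+\theta) D$ reflects the improved Moser-Trudinger constant for radial functions on a ball.

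The pivotal step, announced in the abstract as the building block of the analysis, is to construct a Lyapunov functional $\mathcal{L}(u,v,w)$ for \eqref{aPL}. A natural candidate combines a Keller-Segel-type entropy-energy with a relative entropy between $u$ and $v$:
\begin{equation*}
\mathcal{L}(u,v,w) = \int_\Omega u\log u + \int_\Omega v\log(\theta v) - \kappa \int_\Omega v w + \frac{\kappa}{2} \int_\Omega \bigl( D|\nabla w|^2 + \alpha w^2 \bigr),
\end{equation*}
for some positive $\kappa$ to be determined. Two features are essential. First, since the chemoattractant in \eqref{a3PL} is produced by $v$ (not by $u$), the coupling reads $-\kappa\int vw$ rather than the classical $-\int uw$. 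Second, writing the $v$-entropy as $\int v\log(\theta v)$ forces the exchange terms $\theta v - u$ in \eqref{a1PL} and $u - \theta v$ in \eqref{a2PL} to combine into the nonnegative contribution $\int(u - \theta v)\log(u/(\theta v))$, which is the relative-entropy dissipation of the linear ODE \eqref{a2PL}. A careful computation using \eqref{aPL}, the Neumann condition, and repeated integration by parts should then identify a value of $\kappa$ for which $\frac{d}{dt}\mathcal{L} \leq 0$, the dissipation containing at least $\int u|\nabla\log u - \nabla w|^2$, $\int(\partial_t w)^2$, and the splitting relative entropy. Achieving this clean cancellation in spite of the non-diffusive character of \eqref{a2PL} is the main technical hurdle.

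Once $\mathcal{L}$ is in hand, the second step is to parlay $\mathcal{L}(u(t),v(t),w(t)) \leq \mathcal{L}(u_0,v_0,w_0)$ into uniform bounds on $\int u\log u$, $\int v\log v$, and $\|\nabla w\|_{L_2(\Omega)}$. The dangerous coupling $-\kappa\int vw$ is controlled via the Moser-Trudinger inequality in its logarithmic Neumann form: for every $\varepsilon>0$ there exists $K_\varepsilon$ with
\begin{equation*}
\int_\Omega e^f \leq K_\varepsilon \exp\left( \left(\frac{1}{8\pi} + \varepsilon\right) \|\nabla f\|_{L_2(\Omega)}^2 + \frac{1}{|\Omega|}\int_\Omega f \right), \qquad f \in W^1_2(\Omega),
\end{equation*}
with the constant $1/(8\pi)$ improved to $1/(16\pi)$ for radially symmetric $f$ on a ball. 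Applied to $f = \lambda w$ for $\lambda$ suitably optimized, and combined with the Young-type estimate $xy \leq x\log x - x + e^y$ on the product $vw$ and the mass bound $\int v \leq M/(1+\theta)$, this allows $\kappa\int vw$ to be absorbed into $\int v\log v$ plus a fraction of $\kappa D \|\nabla w\|_{L_2}^2/2$ as long as $M < 4\pi(1+\theta) D$ (respectively $M < 8\pi(1+\theta) D$ in the radial setting). This is exactly the subcriticality hypothesis, and one concludes that $\int u\log u$, $\int v\log v$, and $\|w\|_{W^1_2(\Omega)}$ remain bounded uniformly on $[0,\infty)$.

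The last step is a standard regularity bootstrap. Viewing \eqref{a3PL} as a linear heat equation with source $v$ and using the $L_1$-control on $v$ together with the $W^1_2$-control on $w$, parabolic smoothing produces uniform-in-time bounds on $\|\nabla w(t)\|_{L_p(\Omega)}$ for every finite $p$. Inserting these into \eqref{a1PL} and running an Alikakos-Moser iteration delivers $u \in L_\infty((0,\infty)\times\Omega)$. The $L_\infty$-bound on $v$ follows from the Duhamel representation $v(t,\cdot) = e^{-\theta t} v_0 + \int_0^t e^{-\theta(t-s)} u(s,\cdot)\,ds$ of \eqref{a2PL}, and a final application of parabolic regularity in \eqref{a3PL} upgrades $w$ to $L_\infty$. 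The radial case (b) is handled identically, the sharper Moser-Trudinger constant being exactly what allows the same argument to be pushed up to $M_c = 8\pi(1+\theta) D$.
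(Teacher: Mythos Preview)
Your plan has the right overall architecture (Liapunov functional + Moser--Trudinger + bootstrap), but two concrete steps fail as written.

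\medskip

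\textbf{The Liapunov functional.} Your candidate with coupling $-\kappa\int vw$ cannot be made monotone for any $\kappa$. Differentiating $\int u\log u$ along \eqref{a1PL} produces the cross term $-\int u\,\Delta w$; to combine this with the Fisher term into the dissipation $\int u|\nabla(\log u - w)|^2$ that you anticipate, you must include $-\int uw$ in the functional, not $-\int vw$. Once you do that, $\frac{d}{dt}\bigl(-\int uw\bigr)$ generates a term $-\int u\,\partial_t w$ which, via \eqref{a2PL}--\eqref{a3PL}, forces two further additions: a second coupling $-\int vw$ \emph{and} a positive term $\tfrac12\|\partial_t w\|_{L_2}^2 = \tfrac12\|D\Delta w - \alpha w + v\|_{L_2}^2$. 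The upshot is that the correct functional is
\[
\mathcal{L}(u,v,w)=\int_\Omega\bigl(L(u)+L_\theta(v)-(u+v)w\bigr)\,\mathrm{d}x
+\frac{1+\theta}{2}\bigl(D\|\nabla w\|_{L_2}^2+\alpha\|w\|_{L_2}^2\bigr)
+\frac12\|D\Delta w-\alpha w+v\|_{L_2}^2,
\]
with the coefficient $(1+\theta)/2$ on the Dirichlet energy determined by the computation, not a free parameter. The presence of the $\|\partial_t w\|_{L_2}^2$ term is essential and is what makes the analysis here more delicate than in the classical Keller--Segel case.

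\medskip

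\textbf{The Moser--Trudinger step.} You invoke the bound $\int_\Omega v \le M/(1+\theta)$, but this holds only at equilibrium; along the flow one merely has $\int_\Omega v \le M$. With coupling $-\kappa\int vw$ and this weaker bound, the Moser--Trudinger comparison yields only $M<4\pi D$, missing the factor $(1+\theta)$. The correct argument uses instead that the coupling in $\mathcal{L}$ is $-\int(u+v)w$ together with the \emph{exact} conservation $\|u+v\|_{L_1}=M$: Jensen's inequality applied separately to $\int(u\log u - uw)$ and $\int(v\log(\theta v)-vw)$ gives a lower bound $-M\ln\|e^w\|_{L_1}+\ldots$, and comparing this against the coefficient $\tfrac{(1+\theta)D}{2}$ on $\|\nabla w\|_{L_2}^2$ via Moser--Trudinger produces exactly the threshold $M<4\pi(1+\theta)D$.

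\medskip

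Once these two points are corrected, your outline of the bootstrap (uniform $W_2^1$ bound on $w$ and $L\log L$ on $u,v$, then $L_p$ iteration on $u$, then Duhamel for $v$, then parabolic regularity for $w$) matches the paper's route; the radial case~(b) is indeed handled identically with the sharper constant $1/(16\pi)$.
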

 
The corresponding unboundedness result for $M$ being supercritical is the following.
 
\begin{theorem}\label{theo1.3}
Let $M>0$. 
\begin{enumerate}
 	\item[(a)] If $M\in (4\pi(1+\theta)D,\infty)\setminus (4\pi(1+\theta)D\mathbb{N})$, then there are solutions $(u,v,w)$ to \eqref{aPL}-\eqref{iPL} 
 	with initial conditions in $\mathcal{I}_M$ with an unbounded first component; that is,
\begin{equation}
\lim_{t\to\infty} \|u(t)\|_{L_\infty(\Omega)} = \infty. \label{gu}
\end{equation} 

\item[(b)] Assume that $\Omega=B_R(0)$ for some $R>0$. If $M\in (8\pi(1+\theta)D,\infty)$, then there are solutions $(u,v,w)$ to \eqref{aPL}-\eqref{iPL} with radially symmetric initial conditions in $\mathcal{I}_M$ with an unbounded first component, i.e., satisfying \eqref{gu}. 
\end{enumerate}
\end{theorem}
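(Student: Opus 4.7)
Since Theorem~\ref{theo1.1} guarantees global-in-time existence, the task is to exhibit solutions whose first component $\|u(t)\|_{L_\infty(\Omega)}$ becomes unbounded as $t\to\infty$. The plan is the classical infinite-time blowup strategy for critical-mass chemotaxis systems (in the spirit of \cite{La2019, TW2017}): exploit the Liapunov functional $\mf$ announced in the abstract, whose infimum on the mass-constrained manifold $\mi_M$ equals $-\infty$ whenever $M$ is supercritical, and combine its monotonicity with a compactness argument to contradict boundedness.

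The natural candidate is a functional of the form
$$
\mf(u,v,w) = \int_\Omega u\log u\, dx + \theta\int_\Omega v\log v\, dx - \int_\Omega uw\, dx + \frac{D}{2}\int_\Omega |\nabla w|^2\, dx + \frac{\alpha}{2}\int_\Omega w^2\, dx,
$$
perhaps with lower-order corrections adapted to the splitting term $\theta v-u$ so that $\frac{d}{dt}\mf(u(t),v(t),w(t))\le 0$ along smooth solutions. Granting this, the first step is to verify that, for $M>4\pi(1+\theta)D$, a concentrating family $(u_\varepsilon,v_\varepsilon,w_\varepsilon)\in\mi_M$ of smooth approximations of a Dirac mass placed at a boundary point (for part~(a)) or at the center (for part~(b)) satisfies $\mf(u_\varepsilon,v_\varepsilon,w_\varepsilon)\to-\infty$ as $\varepsilon\to 0$. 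Heuristically, concentrating the mass $M$ in a region of diameter $\varepsilon$ generates an entropy of order $M|\log\varepsilon|$ but a more negative interaction $-\int uw$, with the latter dominating precisely in the supercritical regime, by the very Moser-Trudinger inequality that underpins Theorem~\ref{theo1.2}.

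Next, argue by contradiction: assume that some element of the concentrating family generates a solution with $\sup_{t\ge 0}\|u(t)\|_{L_\infty(\Omega)}<\infty$. Parabolic regularity upgrades this to relative compactness of the orbit in $W^1_{3,+}(\Omega)^3$; since $\mf$ is a strict Liapunov functional, the nonempty $\omega$-limit set consists of stationary solutions of \eqref{aPL}. At steady state, $v=u/\theta$ and the taxis equation forces $u=ce^w$ for some $c>0$, whence $w$ solves the Liouville-type problem
$$
-D\Delta w + \alpha w = \frac{c}{\theta}\, e^w \quad\text{in } \Omega, \qquad \nabla w\cdot\mathbf{n}=0 \quad\text{on } \partial\Omega,
$$
with the constraint $(1+\theta)\int_\Omega \tfrac{c}{\theta}\, e^w\, dx = M$. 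The Brezis-Merle and Li-Shafrir quantization theorems, adapted to this problem on a two-dimensional domain with Neumann data, show that a noncompact family of stationary solutions can only arise by concentration at interior points (quantum $8\pi D$) or at boundary points (quantum $4\pi D$). Since $M\notin 4\pi(1+\theta)D\NN$, this obstruction is excluded, so the set of stationary solutions of mass $M$ is compact in $C^2(\overline{\Omega})$; hence $\mf$ is bounded from below on it by some finite $\mf_\star(M)$. Choosing $\varepsilon$ so small that $\mf(u_\varepsilon,v_\varepsilon,w_\varepsilon)<\mf_\star(M)$ produces the desired contradiction, since monotonicity of $\mf$ along the orbit forces the limit to be $\ge \mf_\star(M)$. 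This yields \eqref{gu} for at least one solution in the family.

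For part~(b), radial symmetry is propagated by the flow, the Moser-Trudinger constant improves to $8\pi(1+\theta)D$, and radial stationary solutions in $B_R(0)$ can only concentrate at the origin, contributing the interior quantum $8\pi D$ per concentration point. The resulting quantization $M\in 8\pi(1+\theta)D\NN$ is rigid enough that a dedicated ODE analysis of the radial elliptic problem yields compactness of the stationary set at any supercritical mass, which is why no arithmetic exclusion is required. The main obstacle of the whole proof is precisely this step: extending the Brezis-Merle-Li-Shafrir quantization, and the attendant compactness of stationary solutions at supercritical non-quantized mass, to the effective elliptic problem satisfied by $w$ with its linear absorption term and no-flux boundary condition --- once this is in hand, the Liapunov mechanism outlined above delivers the unboundedness in the sharp regime claimed by the theorem.
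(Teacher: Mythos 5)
Your overall strategy is the right one and matches the paper's: show that $\mathcal{L}$ is unbounded below on $\mathcal{I}_M$ for supercritical $M$ (Proposition~\ref{prop5.2}), bounded below on the set $\mathcal{S}_M$ of steady states when $M$ avoids the quantized values (Proposition~\ref{prop5.1}), and deduce infinite-time blowup from the stabilization of bounded trajectories to steady states (Proposition~\ref{prop2.2}). Also, the concentrating family $(u_\varepsilon,v_\varepsilon,w_\varepsilon)$ you propose is exactly the mechanism used in the paper, which employs the logarithmic bubbles $\xi_\eta(x)=2\ln(\eta/(\eta^2+\pi|x|^2))$.

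There is, however, a genuine gap in the functional you write down. Your candidate
$\mf(u,v,w) = \int_\Omega u\log u + \theta\int_\Omega v\log v - \int_\Omega uw + \tfrac{D}{2}\int |\nabla w|^2 + \tfrac{\alpha}{2}\int w^2$
is not the Liapunov functional for this system, and the discrepancies are \emph{not} lower-order corrections. The correct functional \eqref{dPL} has: (i) the interaction term $-(u+v)w$, not $-uw$, because the conserved mass is $\|u+v\|_{L_1}$ and the chemical is produced by $v$; (ii) the quadratic form in $w$ carries the factor $(1+\theta)/2$, not $1/2$; (iii) the $v$-entropy is $L_\theta(v)=v\ln(\theta v)-v+1/\theta$, which differs from $\theta v\ln v$ both in coefficients and in the argument of the logarithm; and most importantly (iv) there is an extra top-order term $\tfrac12\|D\Delta w-\alpha w+v\|_{L_2(\Omega)}^2 = \tfrac12\|\partial_t w\|_{L_2(\Omega)}^2$, which is essential to produce the dissipation identity \eqref{e4.2.1}. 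Without (iv) and the other corrections the $\mathrm{d}/\mathrm{d}t$ computation in Lemma~\ref{lem4.2} does not close, and the lower bound on $\mathcal{L}$ over $\mathcal{S}_M$ does not reduce to the auxiliary functional $\mathcal{F}(W)$ in \eqref{e5.1.5} which is what allows the paper to invoke the Horstmann--Wang bounds. The paper explicitly emphasizes the construction of this functional as a main contribution; proposing it up to ``lower-order corrections'' leaves open precisely the point on which everything else rests.

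Two smaller points. First, for the lower bound on $\mathcal{L}$ over steady states you reach for Brezis--Merle and Li--Shafrir quantization directly; the paper instead reduces to the functional $\mathcal{F}$ and quotes the ready-made bounds in \cite[Lemma~3.5]{HW2001} and \cite[Corollary~3.7, Remark~3.8]{HW2001}, so no fresh blow-up analysis is needed. This is a legitimate alternative route in spirit, but you would have to carry it out for the specific elliptic problem \eqref{e5.1.6} with its absorption $\alpha W$ and Neumann condition, which you do not do. Second, in part (b) you assert that a ``dedicated ODE analysis'' yields compactness at all supercritical masses, but again the paper simply cites the relevant radial estimate from \cite{HW2001}; your explanation of why the arithmetic exclusion disappears is plausible heuristically but is not an argument.
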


While for Keller-Segel systems there are many results on critical mass phenomena distinguishing between boundedness and finite-time blowup, to the best of our knowledge 
such phenomena separating boundedness from infinite-time blowup seem to be scarcer. Still, the latter phenomenon has been established in \cite{CS2015} for a 
chemotaxis model with volume filling effect as well as in \cite{La2019, TW2017} for models related to \eqref{aPL}, but with $0$ instead of $\theta v - u$ in the right hand side of
\eqref{a1PL}.

We prove the results presented above by mainly extending the strategy from \cite{La2019} to \eqref{aPL}--\eqref{iPL}, which in turn has its roots in \cite{HW2001, Ho2001, Ho2002}. We start by constructing a Liapunov functional for \eqref{aPL}--\eqref{iPL} in Section~\ref{sec2}, see \eqref{dPL}, which is of general interest far beyond the results of this work and actually the building block of our analysis.
In Section~\ref{s3} we prove the global existence of solutions to \eqref{aPL}--\eqref{iPL} in Theorem~\ref{theo1.1} by mainly relying on Amann's theory for partially diffusive parabolic systems in an appropriate functional setting, 
in conjunction with a series of \textit{a priori} estimates, some of them involving parts of the Liapunov functional. In Section~\ref{sec4} we prove that global solutions to \eqref{aPL}--\eqref{iPL} are bounded, provided $M=\|u_0+v_0\|_{L_1(\Omega)}$ is suitably small, see Theorem~\ref{theo1.2}. Here we first prove with the help of the Trudinger-Moser inequality that the Liapunov functional constructed in Section~\ref{sec2} is bounded from below for subcritical $M$ and use this property as a starting point for the derivation of further estimates. Finally, in Section~\ref{sec5} we prove that, for $M$ sufficiently large, unbounded solutions exist, as stated in Theorem~\ref{theo1.3}.
Here we use the strategy pioneered in \cite{Ho2002, HW2001} and further developed in \cite{La2019, SS2001}. On the one hand, we establish that any bounded solution approaches the set of stationary solutions when $t\to\infty$. On the other hand, we show that the Liapunov functional is bounded from below on the set of steady states with fixed mass $M >M_c$, but not bounded from below on the set of initial data with mass $M$. Hence, solutions emanating from initial data for which the Liapunov functional takes sufficiently negative values cannot be global and bounded and therefore 
have to blow up in infinite time. As compared to \cite{La2019}, the Liapunov functional constructed here features additional terms involving $v$, so that some arguments, in particular the proof of blowup, are more involved.

\section{A Liapunov functional}\label{sec2}

One of the main contributions of this work is the construction of a Liapunov function for \eqref{aPL}--\eqref{iPL}. To this end, we define for $\theta >0$
\begin{equation}\label{eL}
L(r) := r \ln r -r+1, \qquad  L_\theta (r) := \frac{L(\theta r)}{\theta} = r \ln (\theta r) -r+ \frac{1}{\theta}, \qquad r \ge 0,
\end{equation}
and observe that both functions are nonnegative. We next set
\begin{equation}
\begin{split}
\mathcal{L}(u,v,w) & := \int_\Omega \left( L(u) + L_\theta (v)- (u+v)w \right)\ \mathrm{d}x \\
& \qquad + \frac{1+\theta}{2} \left( D \|\nabla w\|_{L_2 (\Omega)}^2 + \alpha \|w\|_{L_2 (\Omega)}^2 \right) + \frac{1}{2} \|D\Delta w - \alpha w + v\|_{L_2 (\Omega)}^2\ ,
\end{split}\label{dPL}
\end{equation}
and establish that $\mathcal{L}$ is a Liapunov functional for \eqref{aPL}. We emphasize that, in constrast to \cite{La2019}, the Liapunov functional depends not only on $u$ and $w$, but also on $v$ through the term $L_\theta (v)- vw$. This is obviously related to the fact that the conserved quantity throughout time evolution is $\|u+v\|_{L_1(\Omega)}$ instead of $\|u\|_{L_1(\Omega)}$. Thus, some arguments in the forthcoming sections are more involved as compared to \cite{La2019}.

\begin{lemma}\label{lem4.2}
Let $M>0$. Consider $(u_0,v_0,w_0)\in \mathcal{I}_M$ and denote the corresponding solution to \eqref{aPL}-\eqref{iPL} given by Theorem~\ref{theo1.1} by $(u,v,w)$. Then
\begin{equation}\label{e4.2.1}
\frac{\mathrm{d}}{\mathrm{d}t} \mathcal{L}(u,v,w) + \mathcal{D}(u,v,w) =0, \qquad t >0,
\end{equation}
where $\md$ is nonnegative and defined in \eqref{ePL} below.
\end{lemma}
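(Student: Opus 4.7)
The plan is to compute $\frac{\mathrm{d}}{\mathrm{d}t}\mathcal{L}(u,v,w)$ by differentiating each of the five blocks in \eqref{dPL} along the flow \eqref{aPL}, using the time regularity for $t>0$ afforded by Theorem~\ref{theo1.1} (in particular $u,w\in C((0,\infty);W^2_{3,\mb}(\Omega))\cap C^1((0,\infty);L_3(\Omega))$ and $v\in C^1([0,\infty);W^1_{3}(\Omega))$). After substituting the equations for $\partial_t u$, $\partial_t v$, $\partial_t w$ and integrating by parts against the no-flux boundary conditions \eqref{bPL}, I expect the outcome to reduce to a sum of manifestly nonpositive contributions through three separate cancellations, each one defining a summand of $\mathcal{D}$.

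Starting from $L'(r)=\ln r$ and $L_\theta'(r)=\ln(\theta r)$, and using the fact that the splitting terms drop out of $\partial_t u+\partial_t v=\mathrm{div}(\nabla u-u\nabla w)$, differentiating the first integral in \eqref{dPL} produces three spatial terms that assemble into the perfect square
\begin{equation*}
-\int_\Omega \frac{|\nabla u|^2}{u}\,\mathrm{d}x +2\int_\Omega \nabla u\cdot \nabla w\,\mathrm{d}x -\int_\Omega u|\nabla w|^2\,\mathrm{d}x = -\int_\Omega u|\nabla(\ln u-w)|^2\,\mathrm{d}x,
\end{equation*}
together with the reaction contribution $\int_\Omega (\theta v-u)[\ln u-\ln(\theta v)]\,\mathrm{d}x=-\int_\Omega (u-\theta v)\ln(u/(\theta v))\,\mathrm{d}x\le 0$, nonpositive by monotonicity of the logarithm, plus a remainder $-\int_\Omega (u+v)\phi\,\mathrm{d}x$, where I set $\phi:=\partial_t w=D\Delta w-\alpha w+v$.

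For the quadratic blocks the integrations by parts rest on $\nabla w\cdot \mathbf{n}=0$ and its time derivative $\nabla \phi\cdot \mathbf{n}=0$ on $\partial\Omega$. The third block differentiates to $(1+\theta)\int_\Omega(-D\Delta w+\alpha w)\phi\,\mathrm{d}x=(1+\theta)\int_\Omega(v-\phi)\phi\,\mathrm{d}x$, and, since $\partial_t\phi=D\Delta\phi-\alpha\phi+u-\theta v$, the last block differentiates to $-D\|\nabla\phi\|_{L_2(\Omega)}^2-\alpha\|\phi\|_{L_2(\Omega)}^2+\int_\Omega(u-\theta v)\phi\,\mathrm{d}x$. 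Summed with the remainder $-\int_\Omega(u+v)\phi\,\mathrm{d}x$ from the previous step, all $\phi$-linear terms cancel identically because the coefficient $-(u+v)+(1+\theta)v+(u-\theta v)$ vanishes, leaving only $-(1+\theta+\alpha)\|\phi\|_{L_2(\Omega)}^2-D\|\nabla\phi\|_{L_2(\Omega)}^2$. Collecting everything yields \eqref{e4.2.1} with
\begin{equation*}
\mathcal{D}(u,v,w) = \int_\Omega u|\nabla(\ln u-w)|^2\,\mathrm{d}x + \int_\Omega (u-\theta v)\ln\frac{u}{\theta v}\,\mathrm{d}x + (1+\theta+\alpha)\|\partial_t w\|_{L_2(\Omega)}^2 + D\|\nabla \partial_t w\|_{L_2(\Omega)}^2,
\end{equation*}
which is a sum of four manifestly nonnegative terms.

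The main technical obstacle will be the justification of the chain rule $\frac{\mathrm{d}}{\mathrm{d}t}\int_\Omega L(u)\,\mathrm{d}x=\int_\Omega (\ln u)\partial_t u\,\mathrm{d}x$ and its analogue for $L_\theta(v)$, since $\ln$ is singular where $u$ or $v$ vanishes. The standard remedy is to replace $L$ and $L_\theta$ by regularizations whose derivatives are $\ln(r+\varepsilon)$ and $\ln(\theta r+\varepsilon)$, to carry out the computation above on the regularized functional (every integration by parts being then plainly licit thanks to the regularity from Theorem~\ref{theo1.1}), and then to pass to the limit $\varepsilon\to 0$ using monotone and dominated convergence, controlled by the mass identity \eqref{mass} together with the $W^1_3$ bounds on $u$, $v$, $w$.
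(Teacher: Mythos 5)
Your proof is correct and follows essentially the same route as the paper: differentiate $\mathcal{L}$ along the flow, substitute the equations, integrate by parts against the no-flux boundary conditions, and identify the result with $-\mathcal{D}$, with $\mathcal{D}$ as in \eqref{ePL} (note $(u-\theta v)\ln\tfrac{u}{\theta v}=(\theta v-u)(\ln(\theta v)-\ln u)$ and $\partial_t w = D\Delta w-\alpha w+v$, so your $\mathcal{D}$ coincides with the paper's). Your bookkeeping is a bit cleaner than the paper's term-by-term accounting—using $\partial_t(u+v)=\mathrm{div}(\nabla u-u\nabla w)$ to handle the $-(u+v)w$ block and working with $\phi:=\partial_t w$ via $\partial_t\phi=D\Delta\phi-\alpha\phi+u-\theta v$ makes the cancellation of the $\phi$-linear contributions transparent—and your closing regularization remark addresses a genuine subtlety (the singularity of $\ln$ where $u$ or $v$ vanish) that the paper's proof leaves implicit.
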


\begin{proof}
It follows from \eqref{a1PL}, \eqref{a2PL}, and \eqref{bPL} that
\begin{align*}
\frac{\mathrm{d}}{\mathrm{d}t} \int_\Omega \left( u \ln{u} - u - uw \right)\ \mathrm{d}x & = \int_\Omega (\ln u - w) \left[ \mathrm{div}(\nabla u - u \nabla w) + \theta v -u \right]\ \mathrm{d}x \\
& \qquad - \int_\Omega (\partial_t v + \theta v) \partial_t w\ \mathrm{d}x \\
& = - \int_\Omega u |\nabla(\ln{u}-w)|^2\ \mathrm{d}x + \int_\Omega (\theta v -u) \ln{u}\ \mathrm{d}x \\ 
& \qquad + \int_\Omega w \partial_t v\ \mathrm{d}x - \int_\Omega (\partial_t v + \theta v) \partial_t w\ \mathrm{d}x.
\end{align*}
Now, by \eqref{a3PL},
\begin{align*}
\int_\Omega w \partial_t v\ \mathrm{d}x & = \int_\Omega w \left( \partial_t^2 w - D \Delta \partial_t w + \alpha \partial_t w \right)\ \mathrm{d}x \\
& = \int_\Omega \left( \partial_t (w \partial_t w) - (\partial_t w)^2 + D \nabla w\cdot \nabla\partial_t w + \alpha w \partial_t w \right)\ \mathrm{d}x \\
& = \frac{\mathrm{d}}{\mathrm{d}t} \int_\Omega \left(w \partial_t w + \frac{D}{2} |\nabla w|^2 + \frac{\alpha}{2} |w|^2 \right)\ \mathrm{d}x - \|\partial_t w\|_{L_2 (\Omega)}^2 \\
& = \frac{\mathrm{d}}{\mathrm{d}t} \int_\Omega w (D \Delta w-\alpha w + v) \ \mathrm{d}x+  \frac{\mathrm{d}}{\mathrm{d}t} \left( \frac{D}{2} \|\nabla w\|_{L_2 (\Omega)}^2 + \frac{\alpha}{2} \|w\|_{L_2 (\Omega)}^2 \right) - \|\partial_t w\|_{L_2 (\Omega)}^2 \\
& = \frac{\mathrm{d}}{\mathrm{d}t} \left( \int_\Omega v w \ \mathrm{d}x - D \|\nabla w\|_{L_2 (\Omega)}^2 -\alpha \|w\|_{L_2 (\Omega)}^2 + \frac{D}{2} \|\nabla w\|_{L_2 (\Omega)}^2 + \frac{\alpha}{2} \|w\|_{L_2 (\Omega)}^2 \right) \\
& \quad - \|\partial_t w\|_{L_2 (\Omega)}^2 \\
& = - \frac{\mathrm{d}}{\mathrm{d}t} \left( \frac{D}{2} \|\nabla w\|_{L_2 (\Omega)}^2 + \frac{\alpha}{2} \|w\|_{L_2 (\Omega)}^2 - \int_\Omega v w \ \mathrm{d}x \right) - \|\partial_t w\|_{L_2 (\Omega)}^2.
\end{align*}

Using again \eqref{a3PL},
\begin{align*}
- \int_\Omega \partial_t v \partial_t w\ \mathrm{d}x & = - \int_\Omega \partial_t w \left( \partial_t^2 w - D \Delta \partial_t w + \alpha \partial_t w \right)\ \mathrm{d}x \\
& = - \frac{1}{2} \frac{\mathrm{d}}{\mathrm{d}t} \|\partial_t w\|_{L_2 (\Omega)}^2 - D \|\nabla\partial_t w\|_{L_2 (\Omega)}^2 - \alpha \|\partial_t w\|_{L_2 (\Omega)}^2
\end{align*}
and
\begin{align*}
- \theta \int_\Omega v \partial_t w\ \mathrm{d}x & = - \theta \int_\Omega \partial_t w \left( \partial_t w - D \Delta w + \alpha w \right)\ \mathrm{d}x \\ 
& = - \theta \|\partial_t w\|_{L_2 (\Omega)}^2 - \frac{\theta}{2} \frac{\mathrm{d}}{\mathrm{d}t} \left( D \|\nabla w\|_{L_2 (\Omega)}^2 + \alpha \|w\|_{L_2 (\Omega)}^2 \right).
\end{align*}
Gathering the above four identities imply
\begin{align*}
& \frac{\mathrm{d}}{\mathrm{d}t} \int_\Omega \left( u \ln{u} - u -(u+v)w \right)\ \mathrm{d}x + \frac{1+\theta}{2} \frac{\mathrm{d}}{\mathrm{d}t} \left( D \|\nabla w\|_{L_2 (\Omega)}^2 + \alpha \|w\|_{L_2 (\Omega)}^2 \right) 
+ \frac{1}{2} \frac{\mathrm{d}}{\mathrm{d}t} \|\partial_t w\|_{L_2 (\Omega)}^2 \\
& \qquad = - \int_\Omega u |\nabla(\ln{u}-w)|^2\ \mathrm{d}x + \int_\Omega (\theta v - u)\ln{u}\ \mathrm{d}x \\
& \qquad\qquad - D \|\nabla\partial_t w\|_{L_2 (\Omega)}^2 - (1+\theta+\alpha) \|\partial_t w\|_{L_2 (\Omega)}^2.
\end{align*}
Finally, since
\begin{align*}
\frac{\mathrm{d}}{\mathrm{d}t} \int_\Omega (v \ln{(\theta v)} - v)\ \mathrm{d}x & = \int_\Omega \ln{(\theta v)} \partial_t v\ \mathrm{d}x \\
& = \int_\Omega \ln{(\theta v)} (u-\theta v)\ \mathrm{d}x
\end{align*}
by \eqref{a2PL}, we end up with
\begin{equation}
\frac{\mathrm{d}}{\mathrm{d}t} \mathcal{L}(u,v,w) + \mathcal{D}(u,v,w) =0\ , \label{cPL}
\end{equation}
where
\begin{equation}
\begin{split}
\mathcal{D}(u,v,w) & := \int_\Omega u |\nabla(\ln{u}-w)|^2\ \mathrm{d}x + \int_\Omega (\theta v - u)(\ln{(\theta v)} - \ln{u})\ \mathrm{d}x \\
& \qquad + D \|\nabla(D\Delta w - \alpha w + v)\|_{L_2 (\Omega)}^2 + (1+\theta+\alpha) \|D\Delta w - \alpha w + v\|_{L_2 (\Omega)}^2\ .
\end{split} \label{ePL}
\end{equation}
Observe that the monotonicity of the logarithm function ensures the nonnegativity of $\mathcal{D}$.
\end{proof}

A useful consequence of the availability of a Liapunov functional is the stabilization of global solutions which are bounded in a suitable functional space; that is, the cluster points of such solutions as $t\to\infty$ in an appropriate topology are necessarily stationary solutions. In that direction, we report the following result, which is similar to \cite[Theorem~5.2]{GZ1998}, \cite[Lemma~1]{Ho2002}, and \cite[Proposition~3.8]{La2019}.

\begin{proposition}\label{prop2.2}
Let $M>0$. Consider $(u_0,v_0,w_0)\in \mathcal{I}_M$ and denote the corresponding solution to \eqref{aPL}-\eqref{iPL} given by Theorem~\ref{theo1.1} by $(u,v,w)$. Assume also that 
\begin{equation}
\sup_{t\ge 0} \|u(t)\|_{L_\infty(\Omega)} < \infty. \label{apbu}
\end{equation}
Then there are a sequence $(t_j)_{j\ge 1}$ of positive times, $t_j\to\infty$, and nonnegative functions $(u_*,v_*,w_*)\in L_\infty(\Omega;\mathbb{R}^2)\times W_{2,\mb}^2(\Omega)$ such that
\begin{subequations}\label{stab}
\begin{equation}
\lim_{j\to\infty} \left[ \|u(t_j)-u_*\|_{L_2(\Omega)} + \|v(t_j)-v_*\|_{L_2(\Omega)} + \|w(t_j)-w_*\|_{W_2^1(\Omega)} \right] = 0 \label{stab1} 
\end{equation}
and
\begin{equation}
\mathcal{L}(u_*,v_*,w_*) \le \liminf_{j\to \infty} \mathcal{L}(u(t_j),v(t_j),w(t_j)), \label{stab2}
\end{equation}
\end{subequations}
where 
\begin{equation*}
u_* = \frac{\theta M}{\theta+1} \frac{e^{w_*}}{\left\| e^{w_*} \right\|_{L_1(\Omega)}}, \qquad v_* = \frac{M}{\theta+1} \frac{e^{w_*}}{\left\| e^{w_*} \right\|_{L_1(\Omega)}},
\end{equation*}
and $w_*$ is a solution to the nonlocal elliptic problem
\begin{equation*}
-D \Delta w_* + \alpha w_* = \frac{M}{\theta+1} \frac{e^{w_*}}{\left\| e^{w_*} \right\|_{L_1(\Omega)}} \;\;\text{ in }\;\; \Omega, \quad \nabla w_*\cdot \mathbf{n} = 0 \;\;\text{ on }\;\; \partial\Omega.
\end{equation*}
\end{proposition}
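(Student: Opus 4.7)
The plan is to exploit the Liapunov identity from Lemma~\ref{lem4.2} together with the hypothesis \eqref{apbu} to establish (i) compactness of the trajectory for large times, (ii) vanishing of the dissipation $\mathcal{D}$ along a subsequence of times, and (iii) identification of the limit using the explicit form of $\mathcal{D}$ in \eqref{ePL}. First, I would bootstrap \eqref{apbu} into a full parabolic regularity estimate: since $u$ is bounded in $L_\infty(\Omega)$ uniformly in $t$, the ODE \eqref{a2PL} yields via integration in time a uniform bound on $\|v(t)\|_{L_\infty(\Omega)}$; parabolic maximal regularity applied to \eqref{a3PL} then provides uniform bounds for $w(t)$ in $W^2_{p,\mathcal{B}}(\Omega)$ and for $\partial_t w(t)$ in $L_p(\Omega)$ for any $p\in(1,\infty)$, and a further bootstrap in \eqref{a1PL} gives $u(t)$ bounded in $W^1_p(\Omega)$ uniformly for $t\ge 1$. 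In particular, $\mathcal{L}(u(t),v(t),w(t))$ is bounded from below on $[0,\infty)$, and the trajectory $\{(u(t),v(t),w(t))\}_{t\ge 1}$ is relatively compact in $L_2(\Omega)\times L_2(\Omega)\times W^1_2(\Omega)$.

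Second, Lemma~\ref{lem4.2} then implies that $t\mapsto\mathcal{L}(u(t),v(t),w(t))$ is nonincreasing with finite limit and that $\int_0^\infty \mathcal{D}(u,v,w)(s)\,\mathrm{d}s<\infty$. Hence there is a sequence $t_j\to\infty$ along which $\mathcal{D}(u(t_j),v(t_j),w(t_j))\to 0$, so each of the four nonnegative contributions to \eqref{ePL} vanishes along $(t_j)$. By the compactness from step one, I extract (without relabeling) strong limits $(u(t_j),v(t_j),w(t_j))\to(u_*,v_*,w_*)$ in $L_2(\Omega)\times L_2(\Omega)\times W^1_2(\Omega)$ and a.e.~in $\Omega$, and the mass conservation \eqref{mass} passes to the limit to give $\|u_*+v_*\|_{L_1(\Omega)}=M$, which rules out the degenerate case $u_*\equiv v_*\equiv 0$.

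Third, I identify the limit from the four vanishing dissipation terms. The convergence $\|D\Delta w(t_j)-\alpha w(t_j)+v(t_j)\|_{L_2(\Omega)}\to 0$, combined with the $W^2_p$ bounds on $w(t_j)$, gives in the limit the elliptic equation $-D\Delta w_*+\alpha w_*=v_*$ with homogeneous Neumann boundary condition. Since $(\theta v-u)(\ln(\theta v)-\ln u)$ is a nonnegative pointwise integrand that vanishes only where $u=\theta v$, the a.e.~convergence forces $u_*=\theta v_*$ a.e.~in $\Omega$. To exploit $\int_\Omega u|\nabla(\ln u-w)|^2\,\mathrm{d}x\to 0$, I would rewrite it on $\{u_j>\varepsilon\}$ as $\int |\nabla(u_j e^{-w_j})|^2 e^{w_j}/u_j\,\mathrm{d}x$, use the uniform regularity together with the strong convergence of $w_j$ to pass to the limit, and combine with the strict positivity $u_*>0$ (via the elliptic equation and the Harnack inequality) to conclude that $u_* e^{-w_*}$ is constant on $\Omega$, i.e.~$u_*=\kappa e^{w_*}$ for some $\kappa>0$. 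The identity $u_*=\theta v_*$ together with $\|u_*+v_*\|_{L_1(\Omega)}=M$ then fixes $\kappa$ and yields the explicit formulas for $u_*$, $v_*$, and the nonlocal equation for $w_*$ in the statement.

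Finally, \eqref{stab2} follows from the lower semicontinuity of $\mathcal{L}$ with respect to the established modes of convergence: Fatou's lemma handles the entropies $L(u)$ and $L_\theta(v)$ (both nonnegative), strong convergence in $L_2$ and $W^1_2$ takes care of the cross-term $\int(u+v)w\,\mathrm{d}x$ and the quadratic form in $w$, and weak $W^2_2$ convergence (available from the uniform $W^2_p$ bound) yields weak $L_2$ lower semicontinuity of $\|D\Delta w-\alpha w+v\|_{L_2(\Omega)}^2$. The step I expect to be hardest is the identification $u_*=\kappa e^{w_*}$: the weight $u$ degenerates on the zero set of $u_*$ and the logarithm is singular there, so converting the vanishing of a weighted Dirichlet integral into the pointwise relation $\nabla\ln u_*=\nabla w_*$ requires a careful cutoff-and-limit argument along the lines of \cite[Proposition~3.8]{La2019}.
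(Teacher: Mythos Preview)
Your proposal is correct and follows essentially the same approach as the paper, which defers almost entirely to \cite[Proposition~3.8]{La2019} after recording the preliminary bounds \eqref{apbv} and \eqref{apbw}. Your outline is in fact a faithful unpacking of that cited argument adapted to the three-component system here (with the additional term $(\theta v-u)(\ln(\theta v)-\ln u)$ in $\mathcal{D}$ giving $u_*=\theta v_*$), and you correctly flag the passage from the vanishing weighted integral $\int u|\nabla(\ln u-w)|^2\,\mathrm{d}x$ to the relation $u_*=\kappa e^{w_*}$ as the delicate step requiring the cutoff argument from \cite{La2019}.
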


\begin{proof}
The proof is similar to that of \cite[Proposition~3.8]{La2019}, after observing that \eqref{a2PL}, \eqref{apbu}, and the positivity of $\theta$ imply that 
\begin{equation}
\sup_{t\ge 0} \|v(t)\|_{L_\infty(\Omega)} < \infty, \label{apbv}
\end{equation}
while \eqref{a3PL}, \eqref{bPL}, \eqref{mass}, and the positivity of $\alpha$ ensure that
\begin{equation}
\sup_{t\ge 0} \|w(t)\|_{L_1(\Omega)} < \infty, \label{apbw}
\end{equation}
Thanks to \eqref{apbu}, \eqref{apbv}, and \eqref{apbw}, we may argue as in \cite[Proposition~3.8]{La2019} to derive first a lower bound on $\mathcal{L}(u,v,w)$ and then the claimed stabilization.
\end{proof}

\section{Global existence}\label{s3}

This section is devoted to the proof of Theorem~\ref{theo1.1} and includes three steps: we first establish the local well-posedness of  \eqref{aPL}--\eqref{iPL} in a suitable functional setting and study the regularity of the solution for positive times. We next derive a series of estimates which excludes the occurrence of finite time blowup.
 
 \subsection{Local well-posedness}\label{s3.1}
 
We start with the local well-posedness of \eqref{aPL}--\eqref{iPL}.

\begin{proposition}\label{prop3.1}
  Let  $(u_0,v_0,w_0) \in W^1_3 (\Omega; \RR^3)$ be nonnegative. Then the system \eqref{aPL}--\eqref{iPL} has a unique nonnegative weak solution $(u,v,w)$ in $W^1_3$ defined on a maximal time interval $[0,T_m)$, with $T_m \in (0,\infty]$, satisfying
  \begin{align*}
    (u,w) & \in C \left([0,T_m);W^1_3(\Omega ; \RR^2) \right) \cap C^1 \left( [0,T_m); W^1_{3/2} (\Omega ; \RR^2)^\prime \right), \\
    v & \in C^1([0,T_m);W^1_3(\Omega)),
  \end{align*}
  and
  \begin{equation}\label{mass1}
    \| (u+v)(t) \|_{L_1(\Omega)} = M = \| u_0+v_0 \|_{L_1(\Omega)}, \qquad t \in [0,T_m).
  \end{equation}
  Furthermore, if there is $T>0$ such that
  \begin{equation}\label{e3.1.1}
    (u,v,w) \in BUC \left( [0,T] \cap [0,T_m);W^1_3(\Omega ; \RR^3) \right),
  \end{equation}
  then necessarily $T_m \ge T$.
\end{proposition}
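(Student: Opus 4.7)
The plan is to apply Amann's theory of quasilinear partially diffusive parabolic systems to the vector $(u,v,w)$, regarding $(u,w)$ as the diffusive components and $v$ as the non-diffusive one. First I would recast \eqref{aPL}--\eqref{bPL} in the appropriate abstract form: the diffusion operator on the $(u,w)$-pair decouples into two Laplacians subject to Neumann boundary conditions, hence is trivially normally parabolic. The taxis flux $-u\nabla w$ is nonlinear, but in dimension two the continuous embedding $W^1_3(\Omega) \hookrightarrow L_\infty(\Omega)$ guarantees that $(u,w) \mapsto u\nabla w$ is locally Lipschitz from $W^1_3(\Omega)^2$ into $L_3(\Omega;\RR^2)$, whose divergence takes values in $W^1_{3/2}(\Omega)^\prime$, matching the target space of $\partial_t u$. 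The coupling terms $\theta v - u$, $u - \theta v$, and $v$ are linear bounded maps on $W^1_3(\Omega)$. Amann's local existence theorem then delivers a unique maximal weak solution $(u,v,w)$ on some interval $[0,T_m)$ with the regularity stated in the proposition.

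Next I would establish nonnegativity, the cleanest route being to verify quasi-positivity of the reaction structure and to exploit the explicit formula for $v$ coupled with parabolic comparison. From \eqref{a2PL} one has $v(t,x) = e^{-\theta t}v_0(x) + \int_0^t e^{-\theta(t-s)}u(s,x)\,\mathrm{d}s$, so $v\ge 0$ once $u\ge 0$. For $w$, Duhamel's representation via the Neumann heat semigroup generated by $D\Delta - \alpha$ together with $v\ge 0$ gives $w\ge 0$. For $u$, I would test the weak formulation of \eqref{a1PL} with the negative part $u_-$; the taxis contribution is controlled via H\"older's inequality using $\nabla w \in L_3(\Omega;\RR^2)$ and $W^1_3 \hookrightarrow L_\infty$, and a Gronwall argument forces $u_-\equiv 0$. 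The apparent circularity between these three conclusions can be avoided either by a short iteration on a truncated system (replacing the reaction terms by their positive parts) together with uniqueness, or by invoking the standard invariance criterion for the nonnegative cone in partially diffusive Amann systems.

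Mass conservation \eqref{mass1} follows by summing \eqref{a1PL} and \eqref{a2PL} and integrating over $\Omega$: the divergence term vanishes thanks to \eqref{bPL} and the reaction terms cancel, yielding $\frac{\mathrm{d}}{\mathrm{d}t}\|u(t)+v(t)\|_{L_1(\Omega)} = 0$. The continuation criterion \eqref{e3.1.1} is the blow-up alternative built into Amann's framework: the solution extends past any $T>0$ for which its $W^1_3$-norm remains uniformly bounded on $[0,T]\cap[0,T_m)$.

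The step I expect to be the most delicate is not any individual computation but the careful verification of Amann's abstract hypotheses for this partially diffusive triple, in particular the interpretation of the divergence-form taxis term within the interpolation-extrapolation scale anchored at $W^1_{3/2}(\Omega)^\prime$ and the compatibility of the Neumann boundary condition with the functional setting. Once that setup is in place, nonnegativity, mass conservation, and the blow-up alternative follow by essentially standard arguments adapted to the coupled partially diffusive structure.
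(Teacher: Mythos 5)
Your outline matches the paper's high-level structure (Amann's partially diffusive framework, then nonnegativity, mass conservation, and the continuation criterion), but two steps take a genuinely different route from the paper and are worth flagging.

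\emph{The abstract setup.} You treat the principal part of the $(u,w)$-system as two uncoupled Laplacians and place $\mathrm{div}(u\nabla w)$ on the right-hand side as a quadratic forcing with values in $W^1_{3/2}(\Omega)^\prime$. The paper instead encodes the taxis directly into the diffusion matrix,
$A(V) = \begin{pmatrix} 1 & -v_1 \\ 0 & D \end{pmatrix}$,
so that $\mathcal{A}^1(V)U^1 = -\sum_j \partial_j\bigl(A(V)\partial_j U^1\bigr)$ already produces $-\mathrm{div}(\nabla u - u\nabla w)$ in its first component; upper-triangular matrices with positive diagonal are normally elliptic, so \cite[Theorem~6.4]{Am1991a} applies outright. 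Your version sits at the critical exponent for the smoothing gap $W_{3,\mathcal{B}}^{-1}(\Omega) \to W_3^1(\Omega)$ (two derivatives, exactly what the parabolic semigroup yields), and while this kind of semilinear formulation can be pushed through, the quasilinear normally-elliptic formulation is the standard and more robust trick for chemotaxis and avoids walking that tightrope.

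\emph{Nonnegativity.} The circularity you identify is real (the $\theta v$ term in \eqref{a1PL} is not controllable in sign before you know $v\ge 0$, and $v\ge 0$ comes from $u\ge 0$). Your fixes — a truncated system or an invariance criterion — would work but carry overhead. The paper sidesteps the issue with a single coupled estimate that never needs the sign of $v$ or $w$: write $N(r)=(-r)_+$, test \eqref{a1PL} with $-N(u)$ and \eqref{a2PL} with $-\theta N(v)$, and after dropping the good signed parts of $v_+$ and $u_+$, the cross terms combine with the dissipative self-terms into the perfect square $-2\|N(u)-\theta N(v)\|_{L_2(\Omega)}^2 \le 0$. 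The only ingredient needed from outside the loop is a bound on $\|\nabla w\|$ that closes the Gronwall argument; the paper obtains $w\in L_2((0,T);W^1_\infty(\Omega))$ directly from \eqref{a3PL} and semigroup regularization, using only $v\in C([0,T_m);W^1_3(\Omega))$, so no sign information is needed. Your H\"older bound with $\nabla w \in L_3$ can also be made to close via Gagliardo--Nirenberg, but you would still need to carry the $N(v)$-quantity alongside $N(u)$ in the Gronwall loop; establishing $v\ge 0$ and $w\ge 0$ first is neither available nor necessary at that stage, and $w\ge 0$ falls out afterwards from the comparison principle applied to \eqref{a3PL}.

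Mass conservation and the blow-up alternative are handled the same way in both.
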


\begin{proof}
Throughout the proof, $C$ denotes a positive constant that may vary from line to line and depends only on $\Omega$, $\theta$, $D$, $\alpha$, $\|u_0\|_{W_3^1(\Omega)}$, $\|v_0\|_{W_3^1(\Omega)}$, and $\|w_0\|_{W_3^1(\Omega)}$. The existence and uniqueness of a weak solution $(u,v,w)$ to \eqref{aPL}--\eqref{iPL} having the claimed properties, except for nonnegativity and \eqref{mass1}, are proved in a completely similar way as \cite[Proposition~2.1]{La2019}. The proof relies on the framework developed in \cite{Am1991a,Am1993} to handle systems coupling parabolic equations and ordinary differential equations. Specifically, we set $U = (u_1,u_2,u_3) := (u,w,v)$, $U^1 := (u,w)$, $U^2 := v$, and define
\begin{equation*}
A(V) := \begin{pmatrix} 1 & -v_1 \\ 0 & D \end{pmatrix}, \qquad
  S^1(V) := \begin{pmatrix} \theta v_3 -v_1 \\ v_3 - \alpha v_2 \end{pmatrix}, \qquad 
  S^2(V) := ( v_1 - \theta v_3)
  \end{equation*}
  for $V=(v_1,v_2,v_3)$. Introducing the operators
  \begin{align*}
  & \mathcal{A}^1(V)U^1 := - \sum_{j=1}^2 \partial_j \left( A(V) \partial_j U^1 \right), \quad \mathcal{B}^1(V)U^1 := \sum_{j=1}^2 A(V) \mathbf{n}\cdot \nabla U^1, \\
  & \mathcal{A}^2(V)U^2 := 0 \ , \quad \mathcal{B}^2(V)U^2 := 0, 
  \end{align*}
  the system \eqref{aPL}--\eqref{iPL} can be recast as
  \begin{align*}
  \partial_t U^1 + \mathcal{A}^1(U)U^1 + \mathcal{A}^2(U)U^2 & = S^1(U) \;\;\text{ in }\;\; (0,\infty)\times\Omega, \\
  \partial_t U^2 & = S^2(U) \;\;\text{ in }\;\; (0,\infty)\times\Omega, \\
  \mathcal{B}^1(U)U^1 + \mathcal{B}^2(U)U^2 & = 0 \;\;\text{ on }\;\; (0,\infty)\times\partial\Omega, \\
  U(0) & = (u_0,w_0,v_0) \;\;\text{ in }\;\; \Omega,
  \end{align*}
  and its well-posedness, as stated in Proposition~\ref{prop3.1}, follows from \cite[Theorem~6.4]{Am1991a}. Indeed, this result can be applied here since $(\mathcal{A}^1,\mathcal{B}^1)$ is normally elliptic by \cite[Remarks~4.1~(a)-(iii)]{Am1991a} and \cite[Condition~(6.1)]{Am1991a} is satisfied. Denoting the solution to the above system provided by \cite[Theorem~6.4]{Am1991a} by $U=(U^1,U^2)$, we set $(u,w) := U^1$ and $v:=U^2$ and thereby obtain Proposition~\ref{prop3.1}, but yet without the nonnegativity of $(u,v,w)$ and the mass conservation \eqref{mass1}.
  
  To prove the former, we first notice that, since $W_{3}^{11/6}(\Omega)$ embeds continuously in $W_\infty^1(\Omega)$ by \cite[Chapter~7, Theorem~1.2]{Pa1983} and
  \begin{align*}
  \left( L_3(\Omega) , W_{3,\mathcal{B}}^2(\Omega) \right)_{11/12,3} & \doteq W_{3,\mathcal{B}}^{11/6}(\Omega), \\
  \left( L_3(\Omega) , W_{3,\mathcal{B}}^2(\Omega) \right)_{11/24,3} & \doteq W_{3,\mathcal{B}}^{11/12}(\Omega),
  \end{align*}
  by \cite[Eq.~(5.3) and Theorem~5.2]{Am1993}, we infer from \eqref{a3PL} and regularizing properties of the semigroup in $L_3(\Omega)$ associated with the unbounded operator $-D \Delta + \alpha\mathrm{id}$ with domain $W_{3,\mathcal{B}}^2(\Omega)$ that, for $t\in [0,T_m)$,
  \begin{align*}
 \|w(t)\|_{W_\infty^1(\Omega)} & \le C \|w(t)\|_{W_3^{11/6}(\Omega)} \\ 
 & \le C e^{-\alpha t/2} (Dt)^{-11/24} \|w_0\|_{W_3^{11/12}(\Omega)} + C \int_0^t e^{-\alpha(t-s)/2} (D(t-s))^{-11/24} \|v(s)\|_{W_3^{11/12}(\Omega)} \ \mathrm{d}s\\
 & \le C t^{-11/24} + C \left( \sup_{s\in [0,t]}\left\{ \|v(s)\|_{W_3^1(\Omega)}\right\} \right) t^{13/24},
  \end{align*}
  where we have used the continuous embedding of $W_3^1(\Omega)$ in $W_3^{11/12}(\Omega)$ to obtain the last inequality. Since $v\in C([0,T_m);W_3^1(\Omega)$, the above inequality implies that, for $T\in (0,T_m)$,
  \begin{align*}
  \int_0^T \|w(t)\|_{W_\infty^1(\Omega)}^2\ \mathrm{d}t \le C T^{1/12} + C \left( \sup_{s\in [0,t]}\left\{ \|v(s)\|_{W_3^1(\Omega)}\right\} \right)^2 T^{25/12};
  \end{align*}
  that is, 
  \begin{equation}
  w \in L_2((0,T);W_\infty^1(\Omega))\ , \qquad T\in (0,T_m)\ . \label{u1}
  \end{equation}
  Let us now recall that the positive part $r_+$ of a real number is given by $r_+ := \max\{ r , 0\}$ and set $N(r) := (-r)_+$ for $r\in\mathbb{R}$. Then, 
  \begin{equation}
  N'(r)^2 = - N'(r)\ , \quad r N'(r) = N(r)\ , \quad (N N')(r) = - N(r)\ , \quad r N(r) = - N(r)^2\ . \label{u2}
  \end{equation}
  On the one hand, it follows from \eqref{a1PL}, \eqref{u2}, and Young's inequality that 
  \begin{align*}
  \frac{1}{2} \frac{\mathrm{d}}{\mathrm{d}t} \|N(u)\|_{L_2(\Omega)}^2 & = \int_\Omega N'(u) |\nabla u|^2\ \mathrm{d}x - \int_\Omega u N'(u) \nabla u \cdot \nabla w\ \mathrm{d}x \\ 
  & \qquad - \theta \int_\Omega N(u) (v_+ - N(v))\ \mathrm{d}x - \|N(u)\|_{L_2(\Omega)}^2 \\
  & \le - \|\nabla N(u)\|_{L_2(\Omega)}^2 + \int_\Omega u N'(u)^2 \nabla u \cdot \nabla w\ \mathrm{d}x \\
  & \qquad + \theta \int_\Omega N(u) N(v)\ \mathrm{d}x - \|N(u)\|_{L_2(\Omega)}^2 \\
  & \le - \|\nabla N(u)\|_{L_2(\Omega)}^2 + \int_\Omega N(u) \nabla N(u) \cdot \nabla w\ \mathrm{d}x \\
  & \qquad + \theta \int_\Omega N(u) N(v)\ \mathrm{d}x - \|N(u)\|_{L_2(\Omega)}^2 \\
  & \le - \frac{1}{2} \|\nabla N(u)\|_{L_2(\Omega)}^2 + \frac{1}{2} \|N(u)\|_{L_2(\Omega)}^2 \|\nabla w\|_{L_\infty(\Omega)}^2 \\
  & \qquad + \theta \int_\Omega N(u) N(v)\ \mathrm{d}x - \|N(u)\|_{L_2(\Omega)}^2\ . 
  \end{align*}
  On the other hand, we infer from \eqref{a2PL} and \eqref{u2} that
  \begin{align*}
  \frac{\theta}{2} \frac{\mathrm{d}}{\mathrm{d}t} \|N(v)\|_{L_2(\Omega)}^2 & = - \theta \int_\Omega N(v)  (u_+ -N(u) - \theta v)\ \mathrm{d}x \\
  & \le \theta \int_\Omega N(v) N(u)\ \mathrm{d}x - \theta^2 \|N(v)\|_{L_2(\Omega}^2\ .
  \end{align*}
  Summing up the previous two differential inequalities leads us to
  \begin{align*}
  \frac{\mathrm{d}}{\mathrm{d}t} \left( \|N(u)\|_{L_2(\Omega)}^2 + \theta \|N(v)\|_{L_2(\Omega)}^2 \right) & \le \|N(u)\|_{L_2(\Omega)}^2 \|w\|_{W_\infty^1(\Omega)}^2 - 2 \|N(u) - \theta N(v)\|_{L_2(\Omega)}^2 \\
  & \le \|w\|_{W_\infty^1(\Omega)}^2 \left( \|N(u)\|_{L_2(\Omega)}^2 + \theta \|N(v)\|_{L_2(\Omega)}^2 \right)\ .
\end{align*}  
Hence, by \eqref{u1} and the nonnegativity of $u_0$ and $v_0$,
\begin{align*}
& \|N(u(t))\|_{L_2(\Omega)}^2 + \theta \|N(v(t))\|_{L_2(\Omega)}^2 \\
& \qquad \le \left( \|N(u_0)\|_{L_2(\Omega)}^2 + \theta \|N(v_0)\|_{L_2(\Omega)}^2 \right) \exp\left\{ \int_0^t \|w(s)\|_{W_\infty^1(\Omega)}^2\ \mathrm{d}s \right\} = 0
\end{align*}
for $t\in [0,T_m)$, from which the nonnegativity of $u$ and $v$ in $[0,T_m)$ follows. That $w$ is also nonnegative in $[0,T_m)$ is next a straightforward consequence of that of $v$ and the comparison principle applied to \eqref{a3PL}.
  
We finally integrate \eqref{a1PL} and \eqref{a2PL} over $\Omega$ and deduce from the no-flux boundary conditions \eqref{bPL} that 
\begin{equation*}
\frac{\mathrm{d}}{\mathrm{d}t} \int_\Omega (u+v)\ \mathrm{d}x = \int_\Omega  \mathrm{div}(\nabla u - u \nabla w)\ \mathrm{d}x =0\ , \qquad t \in [0,T_m),
\end{equation*}
from which \eqref{mass1} follows by the nonnegativity of $u$ and $v$.
\end{proof}

\subsection{Smoothness for positive times}\label{s3.2}

Owing to the regularizing properties of the Laplace operator and the associated semigroup in $L_3(\Omega)$, the solution to \eqref{aPL}--\eqref{iPL} constructed in Proposition~\ref{prop3.1} is more regular for positive times, as reported in the next result.
 
\begin{corollary}\label{cor3.2}
Let the assumptions from Proposition~\ref{prop3.1} be fulfilled and denote the weak solution to \eqref{aPL}--\eqref{iPL} constructed in Proposition~\ref{prop3.1} by $(u,v,w)$. Then
  \begin{align}
     & u \in C\left((0,T_m);W^{2}_{3, \mb}(\Omega) \right) \cap C^1 \left( (0,T_m); L_3 (\Omega) \right), \label{e3.2.3}  \\
     & w \in C \left((0,T_m);W^{2}_{3, \mb}(\Omega) \right) \cap C^1 \left( (0,T_m); L_3 (\Omega) \right). \label{e3.2.4}
  \end{align}
\end{corollary}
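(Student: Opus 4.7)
The plan is to bootstrap the regularity established in Proposition~\ref{prop3.1} by exploiting the smoothing properties of the analytic semigroup generated by the Neumann Laplacian on $L_3(\Omega)$ with domain $W^2_{3,\mathcal{B}}(\Omega)$, along lines parallel to \cite[Proposition~2.1]{La2019}. I would treat $w$ first, since equation \eqref{a3PL} is linear in $w$ and its source $v$ is already regular, and then feed the upgraded regularity of $w$ into the equation for $u$.

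For the regularity of $w$, I would rewrite \eqref{a3PL} as $\partial_t w + (-D\Delta + \alpha\,\mathrm{id})w = v$ and use the variation of constants formula
\begin{equation*}
w(t) = e^{-t(-D\Delta + \alpha)} w_0 + \int_0^t e^{-(t-s)(-D\Delta + \alpha)} v(s)\,\mathrm{d}s.
\end{equation*}
Since $-D\Delta + \alpha\,\mathrm{id}$ generates an analytic semigroup on $L_3(\Omega)$ with domain $W^2_{3,\mathcal{B}}(\Omega)$, and since Proposition~\ref{prop3.1} ensures $v \in C^1([0,T_m); W^1_3(\Omega))$, so that $v$ is in particular locally Hölder continuous in time with values in $L_3(\Omega)$, standard analytic semigroup theory yields $w \in C((0,T_m); W^2_{3,\mathcal{B}}(\Omega))$; equation \eqref{a3PL} then directly produces $\partial_t w \in C((0,T_m); L_3(\Omega))$, which is \eqref{e3.2.4}.

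For the regularity of $u$, I would invoke the two-dimensional Sobolev embeddings $W^2_3(\Omega) \hookrightarrow W^1_\infty(\Omega)$ and $W^1_3(\Omega) \hookrightarrow L_\infty(\Omega)$, both valid since $3 > \dim \Omega = 2$. Combined with the previous step and the fact that $u \in C([0,T_m); W^1_3(\Omega))$, these imply
\begin{equation*}
-\mathrm{div}(u \nabla w) = -\nabla u \cdot \nabla w - u\,\Delta w \in C((0,T_m); L_3(\Omega)).
\end{equation*}
Equation \eqref{a1PL} thus takes the form $\partial_t u + (-\Delta + \mathrm{id})u = F$ with $F := -\mathrm{div}(u\nabla w) + \theta v \in C((0,T_m); L_3(\Omega))$ and initial data $u_0 \in W^1_3(\Omega)$, and the same analytic semigroup argument applied to the Neumann Laplacian $-\Delta + \mathrm{id}$ on $L_3(\Omega)$ yields \eqref{e3.2.3}.

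The main technical point is that parabolic smoothing acts only strictly for $t > 0$, which is precisely why \eqref{e3.2.3}--\eqref{e3.2.4} are asserted on the open interval $(0, T_m)$ rather than up to $t = 0$. A secondary subtlety---that mere continuity in time of the source is not always enough to promote a mild solution to a strong one---is handled by the usual time-shift trick: for any $\delta \in (0, T_m)$, I would restart the representation from $t = \delta$, where Proposition~\ref{prop3.1} guarantees $(u(\delta), v(\delta), w(\delta)) \in W^1_3(\Omega; \mathbb{R}^3)$ and the sources inherit the requisite Hölder time regularity, and then let $\delta \downarrow 0$.
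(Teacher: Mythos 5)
Your treatment of $w$ is fine and matches the paper's: $v\in C^1([0,T_m);W^1_3(\Omega))$ is locally Hölder in time with values in $L_3(\Omega)$, so the analytic semigroup generated by $-D\Delta+\alpha\,\mathrm{id}$ on $L_3(\Omega)$ with domain $W^2_{3,\mathcal{B}}(\Omega)$ delivers \eqref{e3.2.4} via standard maximal Hölder regularity (the paper cites \cite[Theorem~10.1]{Am1993} for exactly this).

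The argument for $u$, however, has a genuine gap. You correctly identify that $F := -\mathrm{div}(u\nabla w)+\theta v$ lies in $C((0,T_m);L_3(\Omega))$, but mere \emph{continuity} in time of $F$ is not sufficient for the mild solution of $\partial_t u + (-\Delta+\mathrm{id})u = F$ to be a strong solution in $C((0,T_m);W^2_{3,\mathcal{B}}(\Omega))\cap C^1((0,T_m);L_3(\Omega))$: one needs Hölder continuity in time (this is the classical obstruction in abstract parabolic theory; see Pazy or \cite[Theorem~10.1]{Am1993}, which requires $\rho$-Hölder sources). You acknowledge this and propose to dispose of it by a ``time-shift trick,'' but restarting at $t=\delta$ does not supply the needed Hölder regularity: the problematic terms $\nabla u\cdot\nabla w$ and $u\Delta w$ involve $u$ itself, whose time-Hölder modulus with values in spaces finer than $W^{-1}_3(\Omega)$ is precisely what has to be \emph{established}, not assumed. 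Proposition~\ref{prop3.1} only gives $u\in C([0,T_m);W^1_3)\cap C^1([0,T_m);W^{-1}_3)$, so no Hölder estimate for $u$ in $W^1_3(\Omega)$ (which you would need to make $\nabla u\cdot\nabla w$ Hölder in $L_3$) is available without further work.

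This is exactly why the paper's proof is a two-stage bootstrap rather than a one-shot variation-of-constants argument. The paper first \emph{interpolates} between the $C$- and $C^1$-regularity from Proposition~\ref{prop3.1} to obtain $u\in C^{1-\gamma}([0,T_m);W^{2\gamma-1}_{3,\mathcal{B}}(\Omega))$ for $\gamma\in(5/6,1)$; the choice $\gamma>5/6$ ensures $2\gamma-1>2/3$ so that $W^{2\gamma-1}_{3,\mathcal{B}}(\Omega)$ is an algebra, which gives $\mathrm{div}(u\nabla w)$ Hölder with values in $W^{2\gamma-2}_{3,\mathcal{B}}(\Omega)$. Feeding this into \cite[Theorem~10.1]{Am1993} on $W^{2\gamma-2}_{3,\mathcal{B}}(\Omega)$ raises the regularity of $u$, and a second interpolation followed by a second application of the same theorem, now on $L_3(\Omega)$, closes the loop and yields \eqref{e3.2.3}. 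Your proposal skips the interpolation steps entirely, which is where the time-Hölder regularity of the source actually comes from.
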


\begin{proof}
We adapt and refine the proof of \cite[Corollary~2.2]{La2019}. Let $\beta\in (0,1)$. Since $v\in C^1([0,T_m);W^{1}_{3}(\Omega))$ and $-D \Delta + \alpha\, \mathrm{id}$ generates an analytic semigroup in $L_3(\Omega)$, we infer from \eqref{a3PL} and \cite[Theorem~10.1]{Am1993} (with $\rho=\beta$, $E_0=L_3(\Omega)$, and $E_1=W^{2}_{3,\mathcal{B}}(\Omega)$) that 
\begin{equation}
w \in C^\beta((0,T_m);W^{2}_{3,\mathcal{B}}(\Omega)) \cap C^{1+\beta}((0,T_m);L_3(\Omega)). \label{et1}
\end{equation}

Next, fix $\gamma\in (5/6,1)$. On the one hand, by \eqref{in1} and Proposition~\ref{prop3.1}, 
\begin{equation}
u\in C([0,T_m);W_{3,\mathcal{B}}^1(\Omega)) \cap C^1([0,T_m);W_{3,\mathcal{B}}^{-1}(\Omega)), \label{et0}
\end{equation}
while \cite[Theorem~7.2]{Am1993} guarantees that	
\begin{equation*}
\left( W_{3,\mathcal{B}}^{-1}(\Omega) , W_{3,\mathcal{B}}^{1}(\Omega) \right)_{\gamma,3} \doteq W_{3,\mathcal{B}}^{2\gamma-1}(\Omega)
\end{equation*}
(up to equivalent norms). Then, for $0\le s\le t<T_m$,
\begin{align*}
\|u(t)-u(s)\|_{W_{3,\mb}^{2\gamma-1}(\Omega)} & \le \|u(t)-u(s)\|_{W_{3,\mb}^{-1}(\Omega)}^{1-\gamma} \|u(t)-u(s)\|_{W_{3,\mb}^{1}(\Omega)}^\gamma \\
& \le 2^\gamma (t-s)^{1-\gamma} \sup_{[s,t]} \|\partial_t u\|_{W_{3,\mb}^{-1}(\Omega)}^{1-\gamma} \sup_{[s,t]} \|u\|_{W_{3,\mb}^{1}(\Omega)}^\gamma,
\end{align*}
and we deduce from \eqref{et0} that
\begin{equation}
u \in C^{1-\gamma}([0,T_m);W_{3,\mathcal{B}}^{2\gamma-1}(\Omega)). \label{et2}
\end{equation}
On the other hand, using \eqref{et1} with $\beta=1-\gamma$ along with the continuous embedding of $W_{3,\mb}^2(\Omega)$ in $W_{3,\mb}^{2\gamma}(\Omega)$, we obtain that
\begin{equation}
w\in C^{1-\gamma}((0,T_m);W_{3,\mb}^{2\gamma}(\Omega)). \label{et3}
\end{equation}
Owing to the choice of $\gamma$ which ensures that $2\gamma-1>2/3$, the space $W_{3,\mathcal{B}}^{2\gamma-1}(\Omega)$ is an algebra and we deduce from \eqref{et2} and \eqref{et3} that $u \nabla w\in C^{1-\gamma}((0,T_m);W_{3,\mathcal{B}}^{2\gamma-1}(\Omega;\mathbb{R}^2))$. Thus,
\begin{equation}
\mathrm{div}(u\nabla w) \in C^{1-\gamma}((0,T_m);W_{3,\mathcal{B}}^{2\gamma-2}(\Omega))\ . \label{et4}
\end{equation}
Now, let $\varepsilon\in (0,T_m)$. Thanks to \cite[Theorem~8.5]{Am1993}, the realization in $W_{3,\mathcal{B}}^{2\gamma-2}(\Omega)$ of the Laplace operator with homogeneous Neumann boundary conditions generates an analytic semigroup in $W_{3,\mathcal{B}}^{2\gamma-2}(\Omega)$, its domain being $W_{3,\mathcal{B}}^{2\gamma}(\Omega)$, and it follows from \eqref{a1PL}, \eqref{et4}, and \cite[Theorem~10.1]{Am1993} (with $\rho=1-\gamma$, $E_0=W_{3,\mathcal{B}}^{2\gamma-2}(\Omega)$, and $E_1=W_{3,\mathcal{B}}^{2\gamma}(\Omega)$) that
\begin{equation}
u \in C^{1-\gamma}((\varepsilon/2,T_m);W_{3,\mathcal{B}}^{2\gamma}(\Omega)) \cap C^{2-\gamma}((\varepsilon/2,T_m);W_{3,\mathcal{B}}^{2\gamma-2}(\Omega))\ . \label{et5}
\end{equation}

Finally, fix $\eta\in (5/3-\gamma,1)$ and notice that \cite[Theorem~7.2]{Am1993} guarantees that, up to equivalent norms,
\begin{equation*}
\left( W_{3,\mathcal{B}}^{2\gamma-2}(\Omega) , W_{3,\mathcal{B}}^{2\gamma}(\Omega) \right)_{\eta,3} \doteq W_{3,\mathcal{B}}^{2(\eta+\gamma)-2}(\Omega).
\end{equation*}
Therefore, for $\varepsilon/2< s\le t<T_m$,
\begin{align*}
\|u(t)-u(s)\|_{W_{3,\mb}^{2(\eta+\gamma)-2}(\Omega)} & \le \|u(t)-u(s)\|_{W_{3,\mb}^{2\gamma-2}(\Omega)}^{1-\eta} \|u(t)-u(s)\|_{W_{3,\mb}^{2\gamma}(\Omega)}^\eta \\
& \le (t-s)^{1-\eta} \sup_{[s,t]} \|\partial_t u\|_{W_{3,\mb}^{2\gamma-2}(\Omega)}^{1-\eta} (t-s)^{\eta(1-\gamma)}  \|u\|_{C^{1-\gamma}([s,t];W_{3,\mb}^{1}(\Omega))}^\eta,
\end{align*}
and we infer from \eqref{et5}, that 
\begin{equation*}
u \in C^{1-\gamma\eta}((\varepsilon/2,T_m); W_{3,\mathcal{B}}^{2(\eta+\gamma)-2}(\Omega)).
\end{equation*}
Hence, since $2(\eta+\gamma)-2>4/3>1$, 
\begin{equation}
u \in C^{1-\gamma\eta}((\varepsilon/2,T_m); W_{3,\mathcal{B}}^{1}(\Omega))\ . \label{et6}
\end{equation}
Combining \eqref{et1} (with $\beta=1-\gamma\eta$) and \eqref{et6} and recalling that $W_{3,\mathcal{B}}^1(\Omega)$ is an algebra entail that $u\nabla w$ belongs to $C^{1-\gamma\eta}((\varepsilon/2,T_m);W_{3,\mathcal{B}}^1(\Omega;\mathbb{R}^2))$ and thus
\begin{equation}
\mathrm{div}(u\nabla w) \in C^{1-\gamma\eta}((\varepsilon/2,T_m);L_3(\Omega)). \label{et7}
\end{equation}
In view of \eqref{a1PL} and \eqref{et7}, another application of \cite[Theorem~10.1]{Am1993} (with $\rho=1-\gamma\eta$, $E_0=L_3(\Omega)$, $E_1=W_{3,\mathcal{B}}^2(\Omega)$) gives
\begin{equation*}
u \in C^{1-\gamma\eta}((\varepsilon,T_m);W_{3,\mathcal{B}}^2(\Omega))\cap C^{2-\gamma\eta}((\varepsilon,T_m);L_3(\Omega)).
\end{equation*}
Since $\varepsilon\in (0,T_m)$ is arbitrary, the proof of Corollary~\ref{cor3.2} is complete. 
\end{proof}

\subsection{Estimates and global existence}\label{s3.3}

We now prove the global existence for \eqref{aPL}--\eqref{iPL} and aim at showing that the solution $(u,v,w)$ from Proposition~\ref{prop3.1} satisfies \eqref{e3.1.1} for all $T>0$. To this end, we take advantage of the outcome of Corollary~\ref{cor3.2} which guarantees higher regularity for $(u(t),v(t),w(t))$ for $t\in (0,T_m)$ and derive estimates on $[t_0,T_m)$ for some fixed but arbitrary $t_0\in (0,T_m)$.

Let us thus fix $t_0\in (0,T_m)$ and recall that Corollary~\ref{cor3.2} ensures that
\begin{equation}
(u(t_0),w(t_0))\in W_{3,\mb}^2(\Omega;\mathbb{R}^2), \quad v(t_0)\in W_3^1(\Omega), \;\;\text{ and }\;\; \partial_t w(t_0)\in L_3(\Omega). \label{regt0}
\end{equation}
For further use, we also fix
\begin{equation}
\gamma\in (5/6,1), \label{exptheta}
\end{equation}
see Corollary~\ref{cor3.9} and Lemma~\ref{lem3.10}. For the remainder of this section, $C$ and $(C_i)_{i\ge 1}$ denote positive constants depending only on $\Omega$, $\theta$, $D$, $\alpha$, $\gamma$, $t_0$, $\|u(t_0)\|_{W_3^2(\Omega)}$, $\|v(t_0)\|_{W_3^1(\Omega)}$, and $\|w_0\|_{W_3^2(\Omega)}$. Dependence upon additional parameters is indicated explicitly. We first remark that the mass conservation \eqref{mass1}, in conjunction with the nonnegativity of $u$ and $v$, implies
\begin{equation}\label{mass2}
 \| u(t)\|_{L_1(\Omega)} \le M \qquad\mbox{and}\qquad \| v(t)\|_{L_1(\Omega)} \le M, \qquad t \in [t_0, T_m).
\end{equation}

We next derive a series of estimates. 

\begin{lemma}\label{lem3.3}
  Let $T>t_0$. There is $C_1(T) >0$ such that, for $t \in [t_0,T] \cap [t_0,T_m)$,
  \begin{align*}
    \| L(u(t))\|_{L_1(\Omega)} + \| v(t)\|_{L_2(\Omega)} + \| w(t)\|_{W^1_2(\Omega)} + \| \partial_t w(t)\|_{L_2(\Omega)} & \le C_1(T),\\
    \int_{t_0}^t \left( \|\nabla \sqrt{u}(s)\|_{L_2(\Omega)}^2 + \| \partial_t w(s)\|_{W^1_2(\Omega)}^2 \right)\ \mathrm{d}s & \le  C_1(T),
  \end{align*}
  where $L$ is defined in \eqref{eL}.
\end{lemma}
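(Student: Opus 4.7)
The strategy is to integrate the Liapunov identity from Lemma~\ref{lem4.2} between $t_0$ and $t$, obtaining
\begin{equation*}
\mathcal{L}(u(t),v(t),w(t)) + \int_{t_0}^t \mathcal{D}(u,v,w)(s)\,\mathrm{d}s = \mathcal{L}(u(t_0),v(t_0),w(t_0)).
\end{equation*}
By \eqref{regt0} the right-hand side is a finite constant depending on $t_0$. All summands in $\mathcal{L}$ except $-\int_\Omega (u+v)w\,\mathrm{d}x$ are of the form we wish to control, and the dissipation $\mathcal{D}$ already contains $D\|\nabla(D\Delta w-\alpha w+v)\|_{L_2}^2 + (1+\theta+\alpha)\|\partial_t w\|_{L_2}^2$, which after time integration yields the claimed $\int_{t_0}^t\|\partial_t w\|_{W_2^1}^2\,\mathrm{d}s \le C_1(T)$ once we have pointwise control of the remaining quantities.

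The central task is thus to estimate the cross term $\int_\Omega (u+v)w\,\mathrm{d}x$ by a small fraction of the positive contributions of $\mathcal{L}$ plus possibly $T$-dependent constants. For $\int_\Omega uw\,\mathrm{d}x$ I would use the classical Jensen/relative-entropy inequality: for any $\lambda>0$,
\begin{equation*}
\lambda\int_\Omega uw\,\mathrm{d}x \le \int_\Omega u\ln u\,\mathrm{d}x - \|u\|_{L_1}\ln\tfrac{\|u\|_{L_1}}{|\Omega|} + \|u\|_{L_1}\ln\int_\Omega e^{\lambda w}\,\mathrm{d}x,
\end{equation*}
and then apply the two-dimensional Trudinger--Moser inequality to control $\int_\Omega e^{\lambda w}\,\mathrm{d}x$ by $C\exp\bigl(\tfrac{\lambda^2}{16\pi}\|\nabla w\|_{L_2}^2 + \lambda \overline{w}\bigr)$, where $\overline{w}$ is the spatial mean of $w$. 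The term $\int_\Omega vw\,\mathrm{d}x$ is handled by Young's inequality with a small parameter, absorbing $\|w\|_{L_2}^2$ into the $\tfrac{1+\theta}{2}\alpha\|w\|_{L_2}^2$ contribution of $\mathcal{L}$. The remaining $\|v\|_{L_2}^2$ factor is treated via the Duhamel formula for $v_t = u-\theta v$ together with the two-dimensional Gagliardo--Nirenberg inequality $\|u\|_{L_2}\le C(M)(1 + \|\nabla\sqrt{u}\|_{L_2})$ and the pointwise bound $\|u\|_{L_1}\le M$ from \eqref{mass2}.

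Plugging these estimates into the Liapunov identity, choosing $\lambda$ proportional to $1/M$ so that the coefficient in front of $\|\nabla w\|_{L_2}^2$ becomes a strictly smaller fraction of $\tfrac{1+\theta}{2}D$, and grouping terms, yields an inequality of the form
\begin{equation*}
E(t) + \int_{t_0}^t \widetilde{\mathcal{D}}(s)\,\mathrm{d}s \le C + C\int_{t_0}^t E(s)\,\mathrm{d}s,
\end{equation*}
where $E(t)$ collects the four quantities to be estimated and $\widetilde{\mathcal{D}}$ is a positive remainder involving $\|\nabla\sqrt{u}\|_{L_2}^2$. Gronwall's lemma then furnishes the pointwise bounds on $[t_0,T]\cap[t_0,T_m)$, and the time-integrated bound on $\|\nabla\sqrt{u}\|_{L_2}^2$ follows from $\widetilde{\mathcal{D}}$ together with the lower bound $\int_\Omega u|\nabla(\ln u - w)|^2\,\mathrm{d}x \ge 2\|\nabla\sqrt{u}\|_{L_2}^2 - \int_\Omega u|\nabla w|^2\,\mathrm{d}x$ in $\mathcal{D}$, after controlling $\int_\Omega u|\nabla w|^2\,\mathrm{d}x$ by interpolation using the just-obtained pointwise bound on $\|\nabla w\|_{L_2}^2$.

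The main obstacle is the bookkeeping around $\int_\Omega (u+v)w\,\mathrm{d}x$: the Trudinger--Moser constant and the choice of $\lambda$ must be coordinated so that the $\|\nabla w\|_{L_2}^2$ contribution is strictly absorbable, and the Gagliardo--Nirenberg step used to control $\|v\|_{L_2}^2$ must not feed a worse-than-linear term back into $E(t)$. Once this balance is achieved, the $T$-dependence of $C_1(T)$ enters only through the standard Gronwall factor $e^{CT}$, which is acceptable for subsequent use in excluding finite-time blowup.
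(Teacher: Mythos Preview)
Your approach has a genuine gap: the Trudinger--Moser route to bounding $\int_\Omega uw\,\mathrm{d}x$ inherently carries a mass restriction, and Lemma~\ref{lem3.3} must hold for \emph{all} $M>0$ (this is what makes global existence unconditional). Concretely, from your Jensen inequality one gets
\[
\int_\Omega uw\,\mathrm{d}x \le \frac{1}{\lambda}\int_\Omega u\ln u\,\mathrm{d}x + \frac{M\lambda}{8\pi}\|\nabla w\|_{L_2(\Omega)}^2 + M\bar{w} + \frac{C}{\lambda},
\]
and to absorb both terms into the positive part of $\mathcal{L}$ you need simultaneously $1/\lambda<1$ (for the entropy) and $M\lambda/(8\pi)<(1+\theta)D/2$ (for the gradient). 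These are incompatible as soon as $M\ge 4\pi(1+\theta)D$. Taking $\lambda$ proportional to $1/M$, as you propose, makes the gradient coefficient small but forces the entropy coefficient $1/\lambda\sim M$ to exceed $1$, so the entropy term cannot be moved to the left. Allowing $T$-dependent constants does not help here, because the Liapunov identity is a pointwise-in-$t$ balance, not a differential inequality; there is no mechanism to convert the excess entropy coefficient into a Gronwall term $\int_{t_0}^t E(s)\,\mathrm{d}s$. This obstruction is exactly the structural reason behind the critical-mass dichotomy in Sections~\ref{sec4}--\ref{sec5}.

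The paper instead bypasses the Liapunov functional and Trudinger--Moser entirely. It differentiates $\|L(u)\|_{L_1}+\|L_\theta(v)\|_{L_1}$ directly (getting $-4\|\nabla\sqrt{u}\|_{L_2}^2-\int_\Omega u\Delta w$), combines this with identities for $\int_\Omega u\,\partial_t w$ obtained from \eqref{a2PL}--\eqref{a3PL}, and uses \eqref{a3PL} once more to replace $-D\int_\Omega u\Delta w$ by $\int_\Omega u(v-\alpha w-\partial_t w)$. The resulting bad term is $\int_\Omega uv$ rather than $\int_\Omega uw$, and this is estimated via Gagliardo--Nirenberg as $2\int_\Omega uv\le 2D\|\nabla\sqrt{u}\|_{L_2}^2+C(1+\|v\|_{L_2}^2)$ with \emph{no} mass restriction. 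Adding $\tfrac12\|v\|_{L_2}^2$ to the energy and applying Gronwall then closes the argument.
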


\begin{proof}
  Using \eqref{a1PL}, \eqref{a2PL}, \eqref{bPL} along with integration by parts, we obtain 
  \begin{align}\label{e3.3.1}
    & \hspace*{-20mm} \frac{\mathrm{d}}{\mathrm{d}t} \left( \| L(u)\|_{L_1(\Omega)} + \| L_\theta (v)\|_{L_1(\Omega)} \right) \nonumber \\
    &=  -\int_\Omega \frac{|\nabla u|^2}{u}\ \mathrm{d}x + \int_\Omega \nabla u \cdot \nabla w\ \mathrm{d}x + \int_\Omega (\ln u - \ln (\theta v))(\theta v -u)\ \mathrm{d}x \nonumber \\
    & =  -4 \|\nabla \sqrt{u}\|_{L_2(\Omega)}^2 - \int_\Omega u \Delta w\ \mathrm{d}x - \int_\Omega (\ln (\theta v)-\ln u)(\theta v -u)\ \mathrm{d}x \nonumber \\
    & \le  -4 \|\nabla \sqrt{u}\|_{L_2(\Omega)}^2 - \int_\Omega u \Delta w\ \mathrm{d}x\ ,
  \end{align}
  in view of the monotonicity of the logarithm. Due to \eqref{a2PL}--\eqref{bPL}, we further have
  \begin{align}\label{e3.3.2}
    \int_\Omega u \partial_t w\ \mathrm{d}x  = & \int_\Omega (\partial_t v \partial_t w + \theta v \partial_t w)\ \mathrm{d}x \nonumber \\
     = & \int_\Omega \partial_t w \left( \partial_t^2 w - D \Delta \partial_t w + \alpha \partial_t w \right)\ \mathrm{d}x
      + \theta \int_\Omega \partial_t w \left( \partial_t w - D \Delta w + \alpha w \right)\ \mathrm{d}x \nonumber \\
     = & \; \frac{1}{2} \frac{\mathrm{d}}{\mathrm{d}t} \left( \| \partial_t w \|_{L_2(\Omega)}^2 + \theta D \| \nabla w \|_{L_2(\Omega)}^2 + \theta \alpha \| w \|_{L_2(\Omega)}^2 \right)
    \nonumber \\  
    & + D \| \nabla \partial_t w \|_{L_2(\Omega)}^2 + (\alpha + \theta) \| \partial_t w \|_{L_2(\Omega)}^2.
  \end{align}
  Introducing
  \begin{equation*}
  Y := D \left(\| L(u)\|_{L_1(\Omega)} + \| L_\theta (v)\|_{L_1(\Omega)} \right) + \frac{1}{2} \left( \| \partial_t w \|_{L_2(\Omega)}^2 + \theta D \| \nabla w \|_{L_2(\Omega)}^2 + \theta \alpha \| w \|_{L_2(\Omega)}^2 \right)\ ,
  \end{equation*}
  we combine \eqref{e3.3.1} and \eqref{e3.3.2} and deduce from \eqref{a3PL} that
  \begin{align*}
  \frac{\mathrm{dY}}{\mathrm{d}t} + 4D \|\nabla\sqrt{u}\|_{L_2(\Omega)}^2 & + D \|\nabla\partial_t w\|_{L_2(\Omega)}^2 +(\alpha+\theta) \|\partial_t w\|_{L_2(\Omega)}^2 \\
  & \le \int_\Omega u\left( \partial_t w - D \Delta w \right)\ \mathrm{d}x = \int_\Omega u (v-\alpha w)\ \mathrm{d}x\ .
  \end{align*}
  Also, by \eqref{a2PL},
  \begin{equation*}
  \frac{1}{2} \frac{\mathrm{d}}{\mathrm{d}t} \| v \|_{L_2(\Omega)}^2 = \int_\Omega u v\ \mathrm{d}x - \theta \| v \|_{L_2(\Omega)}^2 \le \int_\Omega u v\ \mathrm{d}x\ .
  \end{equation*}
  Summing the previous two inequalities and using the nonnegativity of $u$ and $w$, we find
  \begin{align}
  \frac{\mathrm{d}}{\mathrm{d}t} \left( Y + \frac{1}{2} \| v \|_{L_2(\Omega)}^2 \right) 
  + 4D \|\nabla\sqrt{u}\|_{L_2(\Omega)}^2 & + D \|\nabla\partial_t w\|_{L_2(\Omega)}^2 +(\alpha+\theta) \|\partial_t w\|_{L_2(\Omega)}^2 \nonumber \\
  & \le 2 \int_\Omega uv\ \mathrm{d}x\ . \label{z1}
  \end{align}
  Since $\Omega\subset \mathbb{R}^2$, we infer from \eqref{mass2} and H\"{o}lder's, Gagliardo-Nirenberg's and Young's inequalities that
  \begin{align}
  2 \int_\Omega u v\ \mathrm{d}x &\le 2 \| u \|_{L_2(\Omega)} \| v \|_{L_2(\Omega)} = 2 \| v \|_{L_2(\Omega)} \| \sqrt{u} \|_{L_4(\Omega)}^2 \nonumber \\
  &\le C \| v \|_{L_2(\Omega)} \left( \| \nabla \sqrt{u} \|_{L_2(\Omega)} \| \sqrt{u} \|_{L_2(\Omega)} + \| \sqrt{u} \|_{L_2(\Omega)}^2 \right) \nonumber \\
  &\le C \| v \|_{L_2(\Omega)} \left( \sqrt{M} \| \nabla \sqrt{u} \|_{L_2(\Omega)} +M \right) \nonumber \\
  &\le 2D \| \nabla \sqrt{u} \|_{L_2(\Omega)}^2 + C \left( 1+  \| v \|_{L_2(\Omega)}^2 \right). \label{z2}
  \end{align}
Combining \eqref{z1} and \eqref{z2} and rearranging the terms, we conclude that
   \begin{align*}
    \frac{\mathrm{d}}{\mathrm{d}t} \left( Y + \frac{1}{2} \| v \|_{L_2(\Omega)}^2 \right) + 2D \|\nabla \sqrt{u}\|_{L_2(\Omega)}^2 & + D \| \nabla \partial_t w \|_{L_2(\Omega)}^2 + (\alpha + \theta) \| \partial_t w \|_{L_2(\Omega)}^2 \\
    &\le C \left( 1+  \| v \|_{L_2(\Omega)}^2 \right).
  \end{align*}
  Applying first Gronwall's inequality and then the time integrated version of the previous inequality, we deduce the claim in view of the positivity of all the terms involved in the left hand side of the above inequality and the finiteness of $Y(t_0)$ stemming from \eqref{regt0}.
\end{proof}

\begin{corollary}\label{cor3.4}
  Let $T>t_0$.  There is $C_2(T) >0$ such that
  \begin{equation*}
  \| w(t)\|_{W^2_2(\Omega)} \le C_2(T) \, , \qquad t \in [t_0,T] \cap [t_0,T_m)\, .
  \end{equation*}
\end{corollary}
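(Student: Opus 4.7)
The plan is to read off the $W_2^2$-estimate directly from the evolution equation~\eqref{a3PL}, using the bounds already available from Lemma~\ref{lem3.3} together with standard elliptic regularity for the Neumann Laplacian. There is no extra differential inequality to solve at this stage.

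First, I would rewrite equation~\eqref{a3PL} as
\begin{equation*}
- D \Delta w = -\partial_t w - \alpha w + v \qquad \text{in } \Omega,
\end{equation*}
complemented by the no-flux boundary condition $\nabla w\cdot \mathbf{n} = 0$ on $\partial\Omega$ from~\eqref{bPL}. Lemma~\ref{lem3.3} provides, for $t\in [t_0,T]\cap [t_0,T_m)$, uniform bounds on $\|\partial_t w(t)\|_{L_2(\Omega)}$, on $\|w(t)\|_{W_2^1(\Omega)}$ (which in particular controls $\|w(t)\|_{L_2(\Omega)}$), and on $\|v(t)\|_{L_2(\Omega)}$. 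Consequently the right-hand side of the displayed equation is bounded in $L_2(\Omega)$ uniformly on $[t_0,T]\cap[t_0,T_m)$, with a bound depending only on the constant $C_1(T)$ from Lemma~\ref{lem3.3} and the parameters $D$, $\alpha$.

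Second, I would invoke elliptic regularity for the Neumann Laplacian: there exists a constant $C=C(\Omega,D)>0$ such that any $z\in W_2^2(\Omega)$ with $\nabla z\cdot\mathbf{n}=0$ on $\partial\Omega$ satisfies
\begin{equation*}
\|z\|_{W_2^2(\Omega)} \le C\bigl( \|{-}D\Delta z\|_{L_2(\Omega)} + \|z\|_{L_2(\Omega)} \bigr).
\end{equation*}
Corollary~\ref{cor3.2} guarantees that $w(t)\in W_{3,\mb}^2(\Omega)\hookrightarrow W_{2,\mb}^2(\Omega)$ for each $t\in(0,T_m)$, so this inequality applies to $z=w(t)$. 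Substituting the pointwise-in-time bound from the previous step yields
\begin{equation*}
\|w(t)\|_{W_2^2(\Omega)} \le C\bigl( \|\partial_t w(t)\|_{L_2(\Omega)} + \alpha \|w(t)\|_{L_2(\Omega)} + \|v(t)\|_{L_2(\Omega)} + \|w(t)\|_{L_2(\Omega)} \bigr) \le C_2(T),
\end{equation*}
which is the desired estimate.

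There is no real obstacle here: the only slightly delicate point is justifying the use of the elliptic $W_2^2$-estimate, which requires knowing that $w(t)$ already lies in $W_2^2(\Omega)$ and satisfies the Neumann boundary condition; both facts are provided by Corollary~\ref{cor3.2}, so the estimate is genuine (rather than merely \textit{a priori}) and yields the stated pointwise bound on $[t_0,T]\cap[t_0,T_m)$.
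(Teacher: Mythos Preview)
Your proof is correct and essentially identical to the paper's: both rewrite \eqref{a3PL} to bound $\|\Delta w(t)\|_{L_2(\Omega)}$ via Lemma~\ref{lem3.3}, invoke Corollary~\ref{cor3.2} to ensure $w(t)\in W_{2,\mb}^2(\Omega)$, and then apply the Calderon--Zygmund/elliptic regularity estimate $\|z\|_{W_2^2(\Omega)}\le C(\|\Delta z\|_{L_2(\Omega)}+\|z\|_{L_2(\Omega)})$.
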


\begin{proof}
  Using \eqref{a3PL} along with Lemma~\ref{lem3.3}, we obtain, for $t \in [t_0,T] \cap [t_0,T_m)$,
  \begin{equation*}
  D \| \Delta w(t)\|_{L_2(\Omega)} 
  \le \| \partial_t w(t)\|_{L_2(\Omega)} + \alpha \| w(t)\|_{L_2(\Omega)} 
  + \| v(t)\|_{L_2(\Omega)} \le (2+\alpha) C_1(T).
  \end{equation*}
 Since $w(t)\in W_{3,\mb}^2(\Omega)$ for $t\in [t_0,T]\cap [t_0,T_m)$ by Corollary~\ref{cor3.2}, the claim follows from Calderon-Zygmund's estimate which guarantees that $\| z\|_{W^2_2(\Omega)} \le C \left(\| \Delta z \|_{L_2(\Omega)} + \| z\|_{L_2(\Omega)} \right)$ for all $z \in W^2_{2,\mb}(\Omega)$.
\end{proof}

\begin{lemma}\label{lem3.5}
  Let $T>t_0$ and $r>0$. There is $C_3(T,r) >0$ such that 
  $$\| u(t)\|_{L_{r+1}(\Omega)} + \| v(t)\|_{L_{r+1}(\Omega)} \le C_3(T,r) \, , \qquad
    t \in [t_0,T] \cap [t_0,T_m)\, .$$ 
\end{lemma}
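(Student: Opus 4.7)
My approach is a joint $L_p$-energy estimate for $u$ and $v$ combined with Gronwall's lemma, in which the chemotactic drift is controlled through the $L_2(\Omega)$-bound on $\Delta w$ provided by Corollary~\ref{cor3.4}. Since $\Omega$ is bounded, $L_q(\Omega)\hookrightarrow L_{r+1}(\Omega)$ whenever $q\ge r+1$, so it suffices to prove the bound for $p:=r+1$ under the additional assumption $p\ge 2$; the case $r\in(0,1)$ then follows from the case $p=2$. The initial data $u(t_0),v(t_0)$ lie in $W_3^2(\Omega)\times W_3^1(\Omega)\hookrightarrow L_\infty(\Omega)^2$ by Corollary~\ref{cor3.2}, Proposition~\ref{prop3.1} and the two-dimensional Sobolev embedding, so $\|u(t_0)\|_{L_p(\Omega)}+\|v(t_0)\|_{L_p(\Omega)}$ is finite.

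Fixing $p\ge 2$, I will multiply \eqref{a1PL} by $pu^{p-1}$, integrate over $\Omega$, use \eqref{bPL}, and transfer a derivative in the drift term to arrive at
\[
\frac{\mathrm{d}}{\mathrm{d}t}\|u\|_{L_p(\Omega)}^p + \frac{4(p-1)}{p}\|\nabla u^{p/2}\|_{L_2(\Omega)}^2 + p\|u\|_{L_p(\Omega)}^p = -(p-1)\int_\Omega u^p\Delta w\,\mathrm{d}x + p\theta\int_\Omega u^{p-1}v\,\mathrm{d}x.
\]
Multiplying \eqref{a2PL} by $pv^{p-1}$ and integrating similarly gives
\[
\frac{\mathrm{d}}{\mathrm{d}t}\|v\|_{L_p(\Omega)}^p + p\theta\|v\|_{L_p(\Omega)}^p = p\int_\Omega u v^{p-1}\,\mathrm{d}x.
\]
Young's inequality then controls both coupling terms $\int_\Omega u^{p-1}v\,\mathrm{d}x$ and $\int_\Omega uv^{p-1}\,\mathrm{d}x$ by $\|u\|_{L_p(\Omega)}^p+\|v\|_{L_p(\Omega)}^p$.

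The crucial step, which I expect to be the main obstacle, is the drift term $-(p-1)\int_\Omega u^p\Delta w\,\mathrm{d}x$. By H\"older's inequality and Corollary~\ref{cor3.4},
\[
\left|\int_\Omega u^p\Delta w\,\mathrm{d}x\right|\le \|u^{p/2}\|_{L_4(\Omega)}^2\|\Delta w\|_{L_2(\Omega)}\le C_2(T)\|u^{p/2}\|_{L_4(\Omega)}^2.
\]
The two-dimensional Gagliardo-Nirenberg inequality yields
\[
\|u^{p/2}\|_{L_4(\Omega)}^2 \le C\|\nabla u^{p/2}\|_{L_2(\Omega)}\|u^{p/2}\|_{L_2(\Omega)} + C\|u^{p/2}\|_{L_2(\Omega)}^2,
\]
and, since $\|u^{p/2}\|_{L_2(\Omega)}^2=\|u\|_{L_p(\Omega)}^p$, Young's inequality converts this into
\[
(p-1)\left|\int_\Omega u^p\Delta w\,\mathrm{d}x\right| \le \frac{2(p-1)}{p}\|\nabla u^{p/2}\|_{L_2(\Omega)}^2 + C(T,p)\|u\|_{L_p(\Omega)}^p,
\]
so the gradient contribution is absorbed into the dissipation $\frac{4(p-1)}{p}\|\nabla u^{p/2}\|_{L_2(\Omega)}^2$ on the left-hand side.

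Adding the two energy identities and applying the previous bounds will produce a differential inequality of the form
\[
\frac{\mathrm{d}}{\mathrm{d}t}\bigl(\|u\|_{L_p(\Omega)}^p+\|v\|_{L_p(\Omega)}^p\bigr) \le C(T,p)\bigl(1+\|u\|_{L_p(\Omega)}^p+\|v\|_{L_p(\Omega)}^p\bigr),
\]
and Gronwall's lemma, together with the finiteness of the data at $t_0$ established at the outset, delivers the bound on $[t_0,T]\cap[t_0,T_m)$. Everything hinges on the drift estimate, which works precisely because the two-dimensional Gagliardo-Nirenberg inequality produces only a single power of $\|\nabla u^{p/2}\|_{L_2(\Omega)}$, enabling absorption into the diffusive dissipation; the $(u,v)$-coupling through the source terms, which prevents handling the $u$-equation alone without a prior $L_p$-control of $v$, is a minor complication resolved by carrying out the two $L_p$-estimates in parallel.
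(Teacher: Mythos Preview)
Your proof is correct and follows essentially the same strategy as the paper: a joint $L_p$-energy estimate for $(u,v)$, with the drift term controlled via $\|\Delta w\|_{L_2(\Omega)}$ from Corollary~\ref{cor3.4}, the two-dimensional Gagliardo--Nirenberg inequality, and Young's inequality, followed by Gronwall. The only cosmetic differences are that the paper works directly for all $r>0$ with the weighted combination $\|u\|_{L_{r+1}(\Omega)}^{r+1}+\theta^r\|v\|_{L_{r+1}(\Omega)}^{r+1}$, which makes the reaction terms combine into $(r+1)\int_\Omega (u-\theta v)(\theta^r v^r-u^r)\,\mathrm{d}x\le 0$ by monotonicity and thus disappear entirely, whereas you use the unweighted sum and absorb the coupling via Young's inequality; and the paper does not need your preliminary reduction to $p\ge 2$.
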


\begin{proof}
 {Let $r>0$}. Using \eqref{aPL} and integration by parts gives
 \begin{align*}
 \frac{\mathrm{d}}{\mathrm{d}t} \left( \| u\|_{L_{r+1}(\Omega)}^{r+1} + \theta^r \| v\|_{L_{r+1}(\Omega)}^{r+1} \right) & = -r(r+1) \int_\Omega u^{r-1} |\nabla u|^2\ \mathrm{d}x + r(r+1) \int_\Omega u^r \nabla u \cdot \nabla w\ \mathrm{d}x \\
 & \qquad + (r+1) \int_\Omega (u-\theta v) \left( \theta^r v^r - u^r \right)\ \mathrm{d}x \\
 & \le - \frac{4r}{r+1} \| \nabla (u^{(r+1)/2})\|_{L_2(\Omega)}^2 - r \int_\Omega u^{r+1} \Delta w \ \mathrm{d}x\ .
 \end{align*}
 It now follows from H\"{o}lder's and Gagliardo-Nirenberg's inequalities that 
\begin{align*}
r \left| \int_\Omega u^{r+1} \Delta w\ \mathrm{d}x \right| & \le r \| \Delta w\|_{L_2 (\Omega)} \left\| u^{(r+1)/2} \right\|_{L_4 (\Omega)}^2 \\
& \le r C \| \Delta w\|_{L_2 (\Omega)} \left(  \left\|\nabla u^{(r+1)/2} \right\|_{L_2 (\Omega)} \left\| u^{(r+1)/2} \right\|_{L_2 (\Omega)} + \left\| u^{(r+1)/2} \right\|_{L_2 (\Omega)}^2  \right) \\
& \le \frac{2r}{r+1} \left\|\nabla u^{(r+1)/2} \right\|_{L_2 (\Omega)} ^2 + C r(r+1) \|\Delta w\|_{L_2(\Omega)}^2 \left\| u^{(r+1)/2} \right\|_{L_2 (\Omega)}^2 \\
& \qquad + C r \| \Delta w\|_{L_2 (\Omega)} \left\| u^{(r+1)/2} \right\|_{L_2 (\Omega)}^2 \\
& \le \frac{2r}{r+1} \left\|\nabla u^{(r+1)/2} \right\|_{L_2 (\Omega)} ^2 + C r(r+1) \left( 1 + \|\Delta w\|_{L_2(\Omega)}^2 \right) \| u\|_{L_{r+1}(\Omega)}^{r+1}\ .
\end{align*} 
Combining the above two inequalities with Corollary~\ref{cor3.4}, we obtain 
 \begin{align*}
\frac{\mathrm{d}}{\mathrm{d}t} \left( \| u\|_{L_{r+1}(\Omega)}^{r+1} + \theta^r \| v\|_{L_{r+1}(\Omega)}^{r+1} \right) & \le - \frac{2r}{r+1} \left\|\nabla u^{(r+1)/2} \right\|_{L_2 (\Omega)} ^2 \\
& \qquad + C r(r+1) \left( 1 + C_2(T)^2 \right) \| u\|_{L_{r+1}(\Omega)}^{r+1}.
 \end{align*}
Then Gronwall's inequality implies the claim.
\end{proof}

\begin{lemma}\label{lem3.6}
  Let $T>t_0$. There is $C_4(T) >0$ such that 
  $$\| u(t)\|_{L_\infty (\Omega)} + \| v(t)\|_{L_\infty (\Omega)} +  \| \partial_t v(t)\|_{L_\infty (\Omega)}\le C_4(T) \, , \qquad t \in [t_0,T] \cap [t_0,T_m)\, .$$ 
\end{lemma}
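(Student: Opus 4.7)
Since Lemma~\ref{lem3.5} delivers $L_p$-bounds on both $u$ and $v$ for arbitrary $p<\infty$, the plan is to bootstrap these to $L_\infty$-bounds on $u$ by semigroup techniques; the bounds on $v$ and $\partial_t v$ will then follow at once from the ODE structure of \eqref{a2PL}. The key preliminary step is to promote $\nabla w$ to $L_\infty(\Omega)$, and the choice $\gamma\in(5/6,1)$ fixed in \eqref{exptheta} is tailored for exactly this.

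First I would exploit the analytic semigroup $(e^{-\tau B})_{\tau\ge 0}$ generated on $L_3(\Omega)$ by $B:=-D\Delta+\alpha\,\mathrm{id}$ with homogeneous Neumann boundary conditions (see \cite[Theorem~8.5]{Am1993}). The variation-of-constants formula
\[
w(t) = e^{-(t-t_0)B} w(t_0) + \int_{t_0}^t e^{-(t-s)B} v(s)\,\mathrm{d}s,
\]
together with the smoothing estimate $\|e^{-\tau B} f\|_{W^{2\gamma}_{3,\mb}(\Omega)} \le C\tau^{-\gamma}\|f\|_{L_3(\Omega)}$ inherent to analytic semigroups, the bound $\|v(s)\|_{L_3(\Omega)}\le C$ from Lemma~\ref{lem3.5}, and the initial regularity $w(t_0)\in W^2_{3,\mb}(\Omega)$ supplied by Corollary~\ref{cor3.2}, yields
\[
\|w(t)\|_{W^{2\gamma}_{3,\mb}(\Omega)} \le C, \qquad t\in [t_0,T]\cap[t_0,T_m),
\]
since $\gamma<1$ makes $\tau\mapsto\tau^{-\gamma}$ locally integrable. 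Because $\gamma>5/6$ and $\Omega\subset\RR^2$, the Sobolev embedding $W^{2\gamma}_3(\Omega)\hookrightarrow W^1_\infty(\Omega)$ holds, giving $\|\nabla w(t)\|_{L_\infty(\Omega)}\le C$ on the same time interval.

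Next I would apply the analogous Duhamel formula to \eqref{a1PL}, treating the chemotactic and reaction terms as forcings:
\[
u(t) = e^{-(t-t_0)A} u(t_0) - \int_{t_0}^t e^{-(t-s)A}\nabla\cdot(u\nabla w)(s)\,\mathrm{d}s + \theta\int_{t_0}^t e^{-(t-s)A} v(s)\,\mathrm{d}s,
\]
where $A:=-\Delta+\mathrm{id}$ with Neumann boundary conditions. The initial term is uniformly bounded thanks to $u(t_0)\in W^2_{3,\mb}(\Omega)\hookrightarrow L_\infty(\Omega)$ (Corollary~\ref{cor3.2} and the 2D Sobolev embedding) and the contractivity of $e^{-\tau A}$ on $L_\infty(\Omega)$. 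For the divergence term I would invoke the standard estimate $\|e^{-\tau A}\nabla\cdot F\|_{L_\infty(\Omega)} \le C\tau^{-5/6}\|F\|_{L_3(\Omega;\RR^2)}$, which combined with
\[
\|u(s)\nabla w(s)\|_{L_3(\Omega)}\le \|\nabla w(s)\|_{L_\infty(\Omega)}\|u(s)\|_{L_3(\Omega)}\le C
\]
produces a locally integrable singularity $(t-s)^{-5/6}$; the last integral is handled similarly via $\|e^{-\tau A} g\|_{L_\infty(\Omega)} \le C\tau^{-1/2}\|g\|_{L_3(\Omega)}$ and the $L_3$-bound on $v$. Putting these ingredients together produces $\|u(t)\|_{L_\infty(\Omega)}\le C_4(T)$.

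Finally, integrating the ODE \eqref{a2PL} in time,
\[
v(t)=e^{-\theta(t-t_0)} v(t_0) + \int_{t_0}^t e^{-\theta(t-s)} u(s)\,\mathrm{d}s,
\]
and combining $v(t_0)\in W^1_3(\Omega)\hookrightarrow L_\infty(\Omega)$ with the just-derived $L_\infty$-bound on $u$ furnishes the $L_\infty$-bound on $v$; the identity $\partial_t v = u-\theta v$ then yields the bound on $\partial_t v$. The main technical obstacle is securing the divergence-semigroup estimate $\|e^{-\tau A}\nabla\cdot F\|_{L_\infty(\Omega)}\le C\tau^{-5/6}\|F\|_{L_3(\Omega;\RR^2)}$ with exponent strictly less than $1$; this is precisely where the strict lower bound $\gamma>5/6$ becomes essential (through the embedding $W^{2\gamma}_3(\Omega)\hookrightarrow W^1_\infty(\Omega)$ in $\RR^2$), and it is classical for the Neumann Laplacian on smooth bounded two-dimensional domains.
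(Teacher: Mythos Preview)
Your argument is correct, but it follows a different path from the paper's.  The paper writes \eqref{a1PL} in the form $\partial_t u = \mathrm{div}(\nabla u) + \mathrm{div}\,\mathbf{f} + g$ with $\mathbf{f}=-u\nabla w$ and $g=\theta v-u$, and then invokes a ready-made Moser--Alikakos iteration result \cite[Lemma~A.1]{TW2012}, feeding it $\mathbf{f}\in L_\infty(I;L_5(\Omega;\RR^2))$ (obtained from Corollary~\ref{cor3.4} and the embedding $W_2^2(\Omega)\hookrightarrow W_r^1(\Omega)$ for all finite $r$) and $g\in L_\infty(I;L_3(\Omega))$ (from Lemma~\ref{lem3.5}).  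The $L_\infty$-bounds on $v$ and $\partial_t v$ are then derived exactly as you do.

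Your route instead first establishes $\nabla w\in L_\infty$---which the paper postpones to Corollary~\ref{cor3.7}, proved \emph{after} Lemma~\ref{lem3.6}---and then handles $u$ by a direct Duhamel/semigroup estimate rather than citing an external iteration lemma.  This reordering is legitimate because your $W_\infty^1$-bound on $w$ relies only on the $L_3$-bound on $v$ from Lemma~\ref{lem3.5} and the regularity of $w(t_0)$ from \eqref{regt0}, neither of which needs Lemma~\ref{lem3.6}.  Your approach is more self-contained (no appeal to \cite{TW2012}), while the paper's is shorter on the page.  Two small remarks: the $L_3\to L_\infty$ smoothing exponent for $e^{-\tau A}g$ in two dimensions is $\tau^{-1/3}$ rather than $\tau^{-1/2}$, though this is harmless here; and the parameter $\gamma$ from \eqref{exptheta} is not actually used in the paper's proof of Lemma~\ref{lem3.6} (it enters later in Corollary~\ref{cor3.9} and Lemma~\ref{lem3.10}), so its appearance in your argument is incidental rather than essential.
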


\begin{proof}
  Let $T>t_0$ and $I := [t_0,T] \cap [t_0,T_m)$. In view of \eqref{a1PL} and \eqref{bPL}, $u$ satisfies 
  $$\partial_t u = \mathrm{div} \left( D \nabla u \right) + \mathrm{div}\, \mathbf{f} + g, \qquad (t,x) \in (t_0,T_m)\times \Omega,$$
  along with no-flux boundary conditions, where $D \equiv 1$, $\mathbf{f} := -u \nabla w$, and $g := \theta v -u$. According to Lemma~\ref{lem3.5},
  \begin{equation*}
  u \in L_\infty(I; L_1 (\Omega)) \;\;\text{ and }\;\; g \in L_\infty(I; L_3(\Omega))\ ,
  \end{equation*} 
  while Corollary~\ref{cor3.4}, Lemma~\ref{lem3.5}, and the continuous embedding of $W^2_2(\Omega)$ in $W^1_r (\Omega)$ for any $r \in [1, \infty)$ ensure that $\mathbf{f} \in  L_\infty(I; L_5(\Omega ; \RR^2))$. We then infer from \cite[Lemma~A.1]{TW2012} that
  $$\| u(t)\|_{L_\infty (\Omega)} \le C(T) \, , \qquad t \in [t_0,T] \cap [t_0,T_m)\, .$$ 
  Combining this estimate with \eqref{a2PL}, \eqref{regt0}, the nonnegativity of $v$, and the continuous embedding of $W_3^1(\Omega)$ in $L_\infty(\Omega)$, we further obtain
  \begin{align}
  0 \le v(t,x) & = e^{-\theta (t-t_0)} v(t_0,x) + \int_{t_0}^t u(s,x) e^{-\theta (t-s)}\ \mathrm{d}s \label{z3} \\
  & \le e^{-\theta (t-t_0)} \|v(t_0)\|_{L_\infty(\Omega)} + \int_{t_0}^t \|u(s)\|_{L_\infty(\Omega)} e^{-\theta (t-s)}\ \mathrm{d}s \nonumber \\
 & \le C \| v(t_0) \|_{W_3^1(\Omega)} + \frac{C(T) }{\theta} \le C(T) \nonumber 
 \end{align}
  for $(t,x) \in [t_0,T] \cap [t_0,T_m)$. Finally, the last part of the claim immediately follows from \eqref{a2PL}.
\end{proof}

\begin{corollary}\label{cor3.7}
  Let $T>t_0$. There is $C_5(T) >0$ such that 
  \begin{equation*}
  \| w(t)\|_{W^1_\infty (\Omega)} \le C_5(T) , \qquad t \in [t_0,T] \cap [t_0,T_m).
  \end{equation*} 
\end{corollary}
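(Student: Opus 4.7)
The plan is to combine the Duhamel representation for $w$ stemming from \eqref{a3PL} with smoothing estimates for the analytic semigroup $(T(\tau))_{\tau\ge 0}$ generated by $-D\Delta+\alpha\,\mathrm{id}$ on $L_3(\Omega)$ (with domain $W^2_{3,\mb}(\Omega)$), using that $v$ is already controlled in $L_\infty$ by Lemma~\ref{lem3.6}. With $\gamma\in(5/6,1)$ fixed as in \eqref{exptheta}, the key observation is that, in space dimension two, $W^{2\gamma}_{3,\mb}(\Omega)$ is continuously embedded in $W^1_\infty(\Omega)$ (since $2\gamma - 2/3 > 1$), so the objective reduces to estimating $\|w(t)\|_{W^{2\gamma}_3(\Omega)}$ uniformly on $[t_0,T]\cap[t_0,T_m)$.

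Starting from the Duhamel identity
\[
w(t) \;=\; T(t-t_0)\,w(t_0) + \int_{t_0}^t T(t-s)\,v(s)\ \mathrm{d}s,
\]
I would treat the two terms separately. For the initial data contribution, I use \eqref{regt0} to obtain $w(t_0)\in W^2_{3,\mb}(\Omega)\hookrightarrow W^{2\gamma}_{3,\mb}(\Omega)$ (as $\gamma<1$), so that $\|T(t-t_0)w(t_0)\|_{W^{2\gamma}_3(\Omega)}\le C\|w(t_0)\|_{W^2_3(\Omega)}$ thanks to the boundedness of $T(\tau)$ on $W^{2\gamma}_{3,\mb}(\Omega)$. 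For the forcing term, I exploit the standard smoothing estimate
\[
\|T(\tau) f\|_{W^{2\gamma}_3(\Omega)} \;\le\; C\,\tau^{-\gamma}\,e^{-\alpha\tau/2}\,\|f\|_{L_3(\Omega)}, \qquad \tau>0,
\]
available through the analyticity of $T$ and \cite[Section~5]{Am1993}. Since Lemma~\ref{lem3.6} gives $\|v(s)\|_{L_3(\Omega)}\le|\Omega|^{1/3}C_4(T)$ on $[t_0,T]\cap[t_0,T_m)$ and $\gamma<1$ makes $\tau^{-\gamma}$ integrable near zero, the resulting time integral is bounded by a constant depending only on $T$.

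Combining these bounds yields $\|w(t)\|_{W^{2\gamma}_3(\Omega)}\le C(T)$, and the embedding $W^{2\gamma}_{3,\mb}(\Omega)\hookrightarrow W^1_\infty(\Omega)$ then delivers Corollary~\ref{cor3.7}. I do not anticipate a genuine obstacle here: the argument is of the same flavour as those already used in Section~\ref{s3.1} and in the proof of Corollary~\ref{cor3.2}. The one point to be checked carefully is the compatibility between the fixed parameter $\gamma\in(5/6,1)$, which controls both the singularity $\tau^{-\gamma}$ of the smoothing estimate and the Sobolev embedding into $W^1_\infty(\Omega)$ in two dimensions; the range \eqref{exptheta} is precisely tailored so that both conditions hold simultaneously.
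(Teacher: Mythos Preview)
Your proposal is correct and follows essentially the same approach as the paper: Duhamel representation for $w$, smoothing of the analytic semigroup from $L_3(\Omega)$ into a fractional Sobolev space continuously embedded in $W^1_\infty(\Omega)$, and Lemma~\ref{lem3.6} to bound $\|v(s)\|_{L_3(\Omega)}$. The only cosmetic difference is that the paper targets the fixed space $W^{11/6}_{3,\mb}(\Omega)$ (with smoothing rate $(t-s)^{-11/12}$), whereas you target $W^{2\gamma}_{3,\mb}(\Omega)$ with the already-fixed $\gamma\in(5/6,1)$; since $11/12\in(5/6,1)$, the two arguments are interchangeable.
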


\begin{proof}
The proof is similar to that of \cite[Corollary~2.7]{La2019} and we recall it here for the sake of completeness. Owing to the continuous embedding of $W_3^{11/6}(\Omega)$ in $W_\infty^1(\Omega)$, see \cite[Theorem~7.1.2]{Pa1983}, and 
 \begin{equation*}
 \left( L_3(\Omega), W_{3,\mb}^2(\Omega) \right)_{11/12,3} \doteq W_{3,\mb}^{11/6}(\Omega),
 \end{equation*}
see \cite[Theorem~7.2]{Am1993}, it follows from \eqref{a3PL}, \eqref{regt0}, Duhamel's formula, the regularizing properties of the heat semigroup, see \cite[Theorem~V.2.1.3]{Am1995}, and Lemma~\ref{lem3.6} that, for $t\in [t_0,T]\cap [t_0,T_m)$, 
\begin{align*}
\|w(t)\|_{W_\infty^1(\Omega)} & \le C \| w(t)\|_{W_3^{11/6}(\Omega)} \\
& \le C e^{-\alpha (t-t_0)/2} \|w(t_0)\|_{W_3^{11/6}(\Omega)} + C \int_{t_0}^t e^{-\alpha(t-s)/2} (t-s)^{-11/12} \|v(s)\|_{L_3(\Omega)}\ \mathrm{d}s \\
& \le C \|w(t_0)\|_{W_3^2(\Omega)} + C C_4(T) \int_{t_0}^t  (t-s)^{-11/12}\ \mathrm{d}s \le C(T),
\end{align*}
and the proof is complete.
\end{proof}

\begin{lemma}\label{lem3.8}
   Let $T>t_0$. There is $C_6(T) >0$ such that 
  $$\| \nabla u(t)\|_{L_2 (\Omega)} + \int_{t_0}^t \| \partial_t u(s)\|_{L_2 (\Omega)}^2\ \mathrm{d}s  \le C_6(T) \, , \qquad
    t \in [t_0,T] \cap [t_0,T_m)\, .$$ 
\end{lemma}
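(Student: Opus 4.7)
The plan is to derive an energy estimate for $\nabla u$ by testing equation \eqref{a1PL} with $-\Delta u$, which is admissible in view of the improved regularity $u \in C((0,T_m); W^2_{3,\mathcal{B}}(\Omega)) \cap C^1((0,T_m); L_3(\Omega))$ provided by Corollary~\ref{cor3.2}. After integration by parts and use of the no-flux boundary condition \eqref{bPL}, this yields, for $t \in [t_0, T] \cap [t_0, T_m)$,
\begin{equation*}
\frac{1}{2}\frac{\mathrm{d}}{\mathrm{d}t}\|\nabla u\|_{L_2(\Omega)}^2 + \|\Delta u\|_{L_2(\Omega)}^2 = \int_\Omega \Delta u \, \mathrm{div}(u \nabla w) \, \mathrm{d}x - \int_\Omega \Delta u \, (\theta v - u)\, \mathrm{d}x.
\end{equation*}

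Next, I would expand $\mathrm{div}(u\nabla w) = \nabla u \cdot \nabla w + u \Delta w$ and bound each of the three resulting contributions on the right-hand side by Cauchy--Schwarz and Young's inequality so as to absorb a fraction of $\|\Delta u\|_{L_2(\Omega)}^2$ into the left-hand side. The contribution of $\nabla u \cdot \nabla w$ produces a factor $\|\nabla w\|_{L_\infty(\Omega)}^2$, which is bounded by Corollary~\ref{cor3.7}; the contribution of $u \Delta w$ produces $\|u\|_{L_\infty(\Omega)}^2 \|\Delta w\|_{L_2(\Omega)}^2$, which is controlled thanks to Lemma~\ref{lem3.6} and Corollary~\ref{cor3.4}; and the linear source term is handled with the help of the $L_\infty$ bounds on $u$ and $v$ from Lemma~\ref{lem3.6}. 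Collecting these estimates yields a differential inequality of the form
\begin{equation*}
\frac{\mathrm{d}}{\mathrm{d}t}\|\nabla u\|_{L_2(\Omega)}^2 + \|\Delta u\|_{L_2(\Omega)}^2 \le C(T) \bigl(1 + \|\nabla u\|_{L_2(\Omega)}^2\bigr).
\end{equation*}
Since \eqref{regt0} guarantees $\|\nabla u(t_0)\|_{L_2(\Omega)} < \infty$, Gronwall's lemma furnishes the pointwise-in-time bound on $\|\nabla u(t)\|_{L_2(\Omega)}$, and a subsequent integration over $[t_0, t]$ provides the companion bound $\int_{t_0}^t \|\Delta u(s)\|_{L_2(\Omega)}^2\, \mathrm{d}s \le C(T)$.

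To conclude, I would rewrite \eqref{a1PL} in the pointwise form $\partial_t u = \Delta u - \nabla u \cdot \nabla w - u \Delta w + \theta v - u$, so that the triangle inequality gives
\begin{equation*}
\|\partial_t u\|_{L_2(\Omega)} \le \|\Delta u\|_{L_2(\Omega)} + \|\nabla w\|_{L_\infty(\Omega)}\|\nabla u\|_{L_2(\Omega)} + \|u\|_{L_\infty(\Omega)} \|\Delta w\|_{L_2(\Omega)} + \theta \|v\|_{L_2(\Omega)} + \|u\|_{L_2(\Omega)}.
\end{equation*}
Squaring, integrating over $[t_0, t]$, and combining the integrated $\|\Delta u\|_{L_2(\Omega)}^2$ bound just obtained with the uniform-in-time estimates of Lemma~\ref{lem3.6} and Corollaries~\ref{cor3.4} and~\ref{cor3.7} completes the proof. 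The main obstacle has essentially been resolved upstream: the chemotaxis cross-term $\int_\Omega \Delta u \cdot u \Delta w\, \mathrm{d}x$ can only be absorbed because Lemma~\ref{lem3.6} has already delivered an $L_\infty$ bound on $u$ and Corollary~\ref{cor3.4} an $L_2$ bound on $\Delta w$, which is precisely the reason why the preceding estimates in Section~\ref{s3.3} were proved in the order given.
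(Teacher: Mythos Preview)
Your argument is correct, but it proceeds via a different test function than the paper. The paper multiplies \eqref{a1PL} by $\partial_t u$ (rather than $-\Delta u$), which directly produces
\[
\|\partial_t u\|_{L_2(\Omega)}^2 + \frac{1}{2}\frac{\mathrm{d}}{\mathrm{d}t}\|u\|_{W_2^1(\Omega)}^2
= -\int_\Omega \partial_t u\,(u\Delta w + \nabla u\cdot\nabla w)\,\mathrm{d}x + \theta\int_\Omega v\,\partial_t u\,\mathrm{d}x,
\]
and then absorbs $\tfrac12\|\partial_t u\|_{L_2(\Omega)}^2$ by Young's inequality, appealing to the same prior bounds (Lemmas~\ref{lem3.3}, \ref{lem3.6} and Corollaries~\ref{cor3.4}, \ref{cor3.7}) that you use. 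This yields the desired $\int_{t_0}^t\|\partial_t u\|_{L_2(\Omega)}^2$ bound in one stroke, without the detour through $\int_{t_0}^t\|\Delta u\|_{L_2(\Omega)}^2$ and the subsequent reconstruction of $\partial_t u$ from the equation. Your route is equally valid and has the minor bonus of delivering an $L_2((t_0,T);L_2)$ bound on $\Delta u$; the paper's choice is simply more economical for the stated conclusion. One small remark: the identity $\frac{\mathrm{d}}{\mathrm{d}t}\|\nabla u\|_{L_2(\Omega)}^2 = -2\int_\Omega \partial_t u\,\Delta u\,\mathrm{d}x$ that underlies your first display relies on $u(t)\in W^2_{3,\mathcal{B}}(\Omega)$ (so that $\nabla u\cdot\mathbf{n}=0$) together with $u\in C^1((0,T_m);L_3(\Omega))$, which is exactly what Corollary~\ref{cor3.2} provides, so your justification is sound.
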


\begin{proof}
  Multiplying \eqref{a1PL} by $\partial_t u$ and integrating over $\Omega$, we conclude from H\"{o}lder's and Young's inequalities that
  \begin{align*}
\| \partial_t u\|_{L_2 (\Omega)}^2  & + \frac{1}{2} \frac{\mathrm{d}}{\mathrm{d}t} \| u\|_{W_2^1 (\Omega)}^2 
    = - \int_\Omega \partial_t u (u \Delta w + \nabla u \cdot \nabla w)\ \mathrm{d}x + \theta \int_\Omega v \partial_t u\ \mathrm{d}x\\
    &\le \| \partial_t u\|_{L_2 (\Omega)} \| u\|_{L_\infty (\Omega)} \| \Delta w \|_{L_2 (\Omega)} 
    +  \| \partial_t u\|_{L_2 (\Omega)} \| \nabla u\|_{L_2 (\Omega)} \| \nabla w \|_{L_\infty (\Omega)} \\
    & \qquad + \theta  \| \partial_t u\|_{L_2 (\Omega)} \| v \|_{L_2 (\Omega)} \\
    &\le \frac{1}{2} \| \partial_t u\|_{L_2 (\Omega)}^2 + \frac{3}{2} \left( \| u\|_{L_\infty (\Omega)}^2 \| \Delta w \|_{L_2 (\Omega)}^2 
    +  \| \nabla u\|_{L_2 (\Omega)}^2 \| \nabla w \|_{L_\infty (\Omega)}^2 
    + \theta^2 \| v \|_{L_2 (\Omega)}^2  \right).
  \end{align*}
  Lemmas~\ref{lem3.3} and~\ref{lem3.6} and Corollaries~\ref{cor3.4} and~\ref{cor3.7} next imply that
  $$\| \partial_t u\|_{L_2 (\Omega)}^2  + \frac{\mathrm{d}}{\mathrm{d}t} \| u\|_{W_2^1(\Omega)}^2 \le C(T) \left( 1 +  \| \nabla u\|_{L_2 (\Omega)}^2 \right),$$
  so that Gronwall's inequality yields the claim.
\end{proof}

\begin{corollary}\label{cor3.9}
  Let $T>t_0$. There is $C_7(T) >0$ such that 
  \begin{equation*}
  \| u(t)\|_{W^1_3 (\Omega)} + \|u(t)\|_{W^{2\gamma}_2(\Omega)} + \| v(t)\|_{W^1_3 (\Omega)} \le C_7(T), \qquad
    t \in [t_0,T] \cap [t_0,T_m),
    \end{equation*}
    the parameter $\gamma\in (5/6,1)$ being defined in \eqref{exptheta}.
\end{corollary}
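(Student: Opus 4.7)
The plan is to bound $\|u(t)\|_{W^{2\gamma}_2(\Omega)}$ by a semigroup argument in $L_2(\Omega)$, then deduce the $W^1_3$--bound on $u$ via Sobolev embedding, and finally recover $\|v(t)\|_{W^1_3(\Omega)}$ by integrating the linear ODE \eqref{a2PL}.

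For the core step, I let $A$ denote the realization in $L_2(\Omega)$ of $-\Delta+\mathrm{id}$ with homogeneous Neumann boundary conditions, which generates an analytic semigroup with $D(A^\gamma) \doteq W^{2\gamma}_{2,\mb}(\Omega)$ by \cite[Theorems~7.2 and 8.5]{Am1993}. Rewriting \eqref{a1PL} as $\partial_t u + Au = \theta v - \mathrm{div}(u\nabla w)$ and applying Duhamel's formula, I would use the contraction property $\|e^{-tA} A^\gamma u(t_0)\|_{L_2} \le \|A^\gamma u(t_0)\|_{L_2}$ for the initial-data term (which is finite since $u(t_0) \in W^2_{3,\mb}(\Omega)\hookrightarrow D(A^\gamma)$ by \eqref{regt0}) and the fractional smoothing $\|A^\gamma e^{-tA}\|_{L_2\to L_2}\le C t^{-\gamma}$ for the source term. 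The source $\theta v - \mathrm{div}(u\nabla w) = \theta v - \nabla u\cdot\nabla w - u\Delta w$ is bounded in $L_\infty((t_0,T);L_2(\Omega))$ by combining the bounds on $\|v\|_{L_\infty}$ and $\|u\|_{L_\infty}$ from Lemma~\ref{lem3.6}, on $\|\nabla u\|_{L_2}$ from Lemma~\ref{lem3.8}, on $\|\nabla w\|_{L_\infty}$ from Corollary~\ref{cor3.7}, and on $\|\Delta w\|_{L_2}$ from Corollary~\ref{cor3.4}. Since $\gamma<1$ makes $(t-s)^{-\gamma}$ integrable on $[t_0,t]$, this will yield $\|u(t)\|_{W^{2\gamma}_2(\Omega)} \le C(T)$.

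For the $W^1_3$--bound on $u$: in two space dimensions the Sobolev embedding $W^{2\gamma}_2(\Omega)\hookrightarrow W^1_3(\Omega)$ holds provided $2\gamma - 1 \ge 1 - 2/3$, that is $\gamma\ge 2/3$, which is ensured by \eqref{exptheta}, so the preceding estimate immediately delivers $\|u(t)\|_{W^1_3(\Omega)} \le C(T)$. For $v$, integrating the ODE \eqref{a2PL} yields
\[
v(t,x) = e^{-\theta(t-t_0)} v(t_0,x) + \int_{t_0}^t e^{-\theta(t-s)} u(s,x)\, \mathrm{d}s,
\]
whence $\|v(t)\|_{W^1_3(\Omega)} \le \|v(t_0)\|_{W^1_3(\Omega)} + \theta^{-1}\sup_{s\in[t_0,t]}\|u(s)\|_{W^1_3(\Omega)} \le C(T)$, using $v(t_0)\in W^1_3(\Omega)$ from Proposition~\ref{prop3.1}.

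The main delicate point will be the proper identification of $D(A^\gamma)$ with $W^{2\gamma}_{2,\mb}(\Omega)$ for $\gamma\in(5/6,1)$, where $2\gamma>3/2$ forces the Neumann boundary condition to enter the domain characterization; this is however standard by Amann's theory, and the required compatibility of $u(t_0)$ with $D(A^\gamma)$ is already secured by the additional smoothing from Corollary~\ref{cor3.2}.
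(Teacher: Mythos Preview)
Your proof is correct and largely parallels the paper's: the $W^{2\gamma}_2$--estimate on $u$ via the $L_2$--semigroup with source $\theta v-\nabla u\cdot\nabla w-u\Delta w$ bounded in $L_2$ (using Lemmas~\ref{lem3.6}, \ref{lem3.8} and Corollaries~\ref{cor3.4}, \ref{cor3.7}) and the $W^1_3$--bound on $v$ via integration of \eqref{a2PL} are exactly the arguments the paper gives.

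The one genuine difference is how you obtain $\|u(t)\|_{W^1_3(\Omega)}$. The paper runs a second, independent semigroup estimate directly in the $L_3$--scale, with smoothing exponent $(t-s)^{-2/3}$ from $L_2$ to $\nabla$ in $L_3$, and only afterwards derives the $W^{2\gamma}_2$--bound. You instead derive the $W^{2\gamma}_2$--bound first and then simply invoke the Sobolev embedding $W^{2\gamma}_2(\Omega)\hookrightarrow W^1_3(\Omega)$, which holds in two dimensions since $2\gamma>5/3>4/3$. Your route is slightly more economical (one semigroup argument instead of two) and fully exploits the choice $\gamma>5/6$ from \eqref{exptheta}; the paper's separate $W^1_3$ argument would actually work for any $\gamma$, but since $\gamma>5/6$ is fixed anyway, nothing is lost by your shortcut.
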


\begin{proof}
  Using \eqref{a1PL} along with properties of the heat semigroup (see, e.g., \cite[Proposition~12.5]{Am1983} and
  \cite[Theorem~V.2.1.3]{Am1995}) and H\"{o}lder's inequality, we deduce from \eqref{regt0}, Corollaries~\ref{cor3.4} and~\ref{cor3.7} as well as Lemmas~\ref{lem3.5}, \ref{lem3.6} and~\ref{lem3.8} that, for $t\in [t_0,T]\cap [t_0,T_m)$, 
  \begin{align*}
    \| \nabla u(t) \|_{L_3 (\Omega)} 
    &\le C \| \nabla u(t_0) \|_{L_3 (\Omega)} + C \int_{t_0}^t (t-s)^{-2/3} \| (u \Delta w + \nabla u\cdot \nabla w - \theta v + u)(s) \|_{L_2 (\Omega)}\ \mathrm{d}s \\
    &\le C + C \int_{t_0}^t (t-s)^{- 2/3} \left( \| u(s)\|_{L_\infty (\Omega)} \| \Delta w(s) \|_{L_2 (\Omega)} + \| u(s) \|_{L_2 (\Omega)} \right)\ \mathrm{d}s \\
    &  \qquad + C \int_{t_0}^t (t-s)^{- 2/3} \left( \| \nabla u(s)\|_{L_2 (\Omega)} \| \nabla w (s)\|_{L_\infty (\Omega)} + \theta \| v(s) \|_{L_2 (\Omega)} \right)\ \mathrm{d}s \\
    & \le C + C(T) \int_{t_0}^t (t-s)^{- 2/3}\ \mathrm{d}s \le C(T).
  \end{align*}
  Combining this estimate with \eqref{regt0} and \eqref{z3} , we obtain
  \begin{equation*}
  \| \nabla v(t) \|_{L_3(\Omega)} \le e^{-\theta (t-t_0)}\| \nabla v(t_0) \|_{L_3(\Omega)} + \int_{t_0}^t \| \nabla u(s) \|_{L_3(\Omega)} e^{-\theta (t-s)}\ \mathrm{d}s \le C(T).
  \end{equation*}
  Together with Lemma~\ref{lem3.5}, the above two estimates entail the stated $W_3^1$-bounds on $u$ and $v$.  
  
  We next invoke \eqref{a1PL} along with properties of the heat semigroup (see \cite[Theorem~V.2.1.3]{Am1995}) and H\"{o}lder's inequality to deduce 
  from Corollaries~\ref{cor3.4} and~\ref{cor3.7} and Lemmas~\ref{lem3.5}, \ref{lem3.6} and~\ref{lem3.8} that, for all $t \in [t_0,T] \cap [t_0,T_m)$,
  \begin{align*}
  \| u(t) \|_{W^{2\gamma}_2 (\Omega)} 
  &\le C \| u(t_0) \|_{W^{2\gamma}_2 (\Omega)} + C \int_{t_0}^t (t-s)^{-\gamma} 
  \| ( u \Delta w + \nabla u  \cdot \nabla w - \theta v + u)(s) \|_{L_2 (\Omega)} \ \mathrm{d}s \\
  &\le C + C \int_{t_0}^t (t-s)^{-\gamma} \| u(s)\|_{L_\infty (\Omega)} \| \Delta w(s) \|_{L_2 (\Omega)}\ \mathrm{d}s \\
  &  \quad + \int_{t_0}^t (t-s)^{-\gamma} \| \nabla u(s)\|_{L_2 (\Omega)} \| \nabla w (s)\|_{L_\infty (\Omega)} \ \mathrm{d}s \\
  & \quad + \int_{t_0}^t (t-s)^{-\gamma} \left( \theta \| v(s) \|_{L_2 (\Omega)} + \| u(s) \|_{L_2 (\Omega)} \right) \ \mathrm{d}s \\
  &\le C + C(T) \int_{t_0}^t (t-s)^{-\gamma} \ \mathrm{d}s \le C(T),
  \end{align*}
  and the proof is complete.
\end{proof}

In view of Corollaries~\ref{cor3.7} and~\ref{cor3.9} there is $C_8(T)>0$ such that
\begin{equation}\label{e3.9.1}
  \| u(t)\|_{W^1_3 (\Omega)} + \| v(t)\|_{W^1_3 (\Omega)} + \| w(t)\|_{W^1_3 (\Omega)} \le C_8(T) \, , \qquad
    t \in [t_0,T] \cap [t_0,T_m)\, .
\end{equation}
Hence, according to \eqref{e3.1.1}, we shall prove H\"{o}lder estimates with respect to time in order to conclude the global existence of $(u,v,w)$.

\begin{lemma}\label{lem3.10}
  Let $T>t_0$ and $\delta := (3\gamma-2)/6\gamma$, recalling that $\gamma\in (5/6,1)$ is defined in \eqref{exptheta}. There is $C_9(T) >0$ such that 
  $$\| u(t_2)- u(t_1) \|_{W^1_3 (\Omega)} + \| v(t_2)- v(t_1)\|_{W^1_3 (\Omega)} + \| w(t_2)-w(t_1)\|_{W^1_3 (\Omega)} \le C_9(T) |t_2-t_1|^\delta$$
  for $t_1,t_2 \in [t_0,T] \cap [t_0,T_m)$.
\end{lemma}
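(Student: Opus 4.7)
The plan is to handle the three components separately, in each case interpolating between a time-regularity estimate derived from the corresponding evolution equation and the uniform space-regularity estimates of Corollaries~\ref{cor3.7} and~\ref{cor3.9}. The component $v$ is immediate: since \eqref{a2PL} reads $\partial_t v = u - \theta v$ and both $u$ and $v$ are uniformly bounded in $W_3^1(\Omega)$ on $[t_0,T]\cap [t_0,T_m)$ by Corollary~\ref{cor3.9}, $\partial_t v$ is uniformly bounded in $W_3^1(\Omega)$, so $v$ is Lipschitz---hence $\delta$-H\"older---continuous in $W_3^1(\Omega)$.

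For $w$, I would first upgrade Corollary~\ref{cor3.4} to a uniform $W_{3,\mb}^2(\Omega)$-estimate by writing \eqref{a3PL} in Duhamel form and combining \eqref{regt0} with the smoothing estimate $\|e^{\tau(D\Delta - \alpha)}z\|_{W_3^2(\Omega)} \le C\tau^{-1/2}\|z\|_{W_3^1(\Omega)}$ for the analytic semigroup generated by $-D\Delta + \alpha\,\mathrm{id}$ on $L_3(\Omega)$, together with the $W_3^1(\Omega)$-bound on $v$ from Corollary~\ref{cor3.9}. Once $w$ is bounded in $W_{3,\mb}^2(\Omega)$ uniformly on $[t_0,T]\cap [t_0,T_m)$, the equation \eqref{a3PL} forces $\partial_t w$ to be uniformly bounded in $L_3(\Omega)$, so $w$ is Lipschitz in $L_3(\Omega)$, and the multiplicative interpolation $\|z\|_{W_3^1(\Omega)} \le C\|z\|_{L_3(\Omega)}^{1/2}\|z\|_{W_3^2(\Omega)}^{1/2}$ then yields $\tfrac{1}{2}$-H\"older continuity of $w$ in $W_3^1(\Omega)$, which is stronger than the desired $\delta$-H\"older bound.

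The heart of the proof is the estimate on $u$, where the precise exponent $\delta$ is forced. Lemma~\ref{lem3.8} combined with the Cauchy--Schwarz inequality gives $\|u(t_2)-u(t_1)\|_{L_2(\Omega)} \le C(T)|t_2-t_1|^{1/2}$, while Corollary~\ref{cor3.9} provides a uniform bound on $\|u(t)\|_{W_2^{2\gamma}(\Omega)}$. Setting $\sigma := 2/(3\gamma)$, which lies in $(0,1)$ thanks to $\gamma > 5/6$, the real interpolation identity $(L_2(\Omega), W_{2,\mb}^{2\gamma}(\Omega))_{\sigma,2} \doteq W_{2,\mb}^{4/3}(\Omega)$ from \cite[Theorem~7.2]{Am1993} together with the two-dimensional Sobolev embedding $W_2^{4/3}(\Omega) \hookrightarrow W_3^1(\Omega)$ yields
\begin{equation*}
\|u(t_2) - u(t_1)\|_{W_3^1(\Omega)} \le C\,\|u(t_2) - u(t_1)\|_{L_2(\Omega)}^{1-\sigma}\,\|u(t_2) - u(t_1)\|_{W_2^{2\gamma}(\Omega)}^{\sigma} \le C(T)\,|t_2-t_1|^{(1-\sigma)/2},
\end{equation*}
and a direct computation confirms $(1-\sigma)/2 = (3\gamma-2)/(6\gamma) = \delta$.

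The main delicacy I anticipate is the borderline Sobolev embedding $W_2^{4/3}(\Omega) \hookrightarrow W_3^1(\Omega)$, which sits at the critical scaling index $4/3 - 1 = 1 - 2/3 = 1/3$; it is nevertheless continuous in dimension two, but if one wishes to steer clear of the critical case, $\sigma$ may be chosen slightly larger than $2/(3\gamma)$ at the cost of a marginally smaller H\"older exponent, which can be compensated by a small decrease of $\gamma$ within $(5/6,1)$ to recover the stated $\delta$.
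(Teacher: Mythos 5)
Your proof is correct, and for the components $u$ and $v$ it follows essentially the same argument as the paper. For $v$, you use \eqref{a2PL} and Corollary~\ref{cor3.9} to get Lipschitz continuity in $W_3^1(\Omega)$, exactly as in the paper. For $u$, the interpolation of $W_{2,\mathcal{B}}^{4/3}$ between $L_2$ and $W_{2,\mathcal{B}}^{2\gamma}$ with index $\sigma = 2/(3\gamma)$, combined with the Cauchy--Schwarz consequence of Lemma~\ref{lem3.8} and the embedding $W_2^{4/3}(\Omega)\hookrightarrow W_3^1(\Omega)$, is precisely the paper's computation leading to the exponent $\delta$; and your worry about the criticality of that embedding is unnecessary, since it is a subcritical Sobolev embedding in dimension two (the target integrability exponent $3$ is finite), so no adjustment of $\sigma$ or $\gamma$ is needed. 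The only place your route genuinely diverges from the paper is the estimate for $w$: the paper interpolates $W_3^1$ between $W_\infty^1$ (using Corollary~\ref{cor3.7}) and $W_2^1$ (using the $L_2$-in-time bound on $\partial_t w$ in $W_2^1$ from Lemma~\ref{lem3.3}) with exponents $1/3$ and $2/3$, giving a $1/3$-H\"{o}lder bound, whereas you first establish a uniform $W_{3,\mathcal{B}}^2(\Omega)$-bound on $w$ by a Duhamel argument (not stated in the paper, but it is proved in the same way as Corollary~\ref{cor3.7}, replacing the target $W_3^{11/6}$ by $W_3^2$ and the kernel $(t-s)^{-11/12}$ by the still-integrable $(t-s)^{-1/2}$, and invoking the $W_3^1$-bound on $v$ from Corollary~\ref{cor3.9}), then deduce from \eqref{a3PL} that $w$ is Lipschitz in $L_3(\Omega)$, and interpolate $W_3^1$ between $L_3$ and $W_3^2$ to obtain a $1/2$-H\"{o}lder bound. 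Both routes deliver more than the required $\delta<1/3$, so the difference is one of bookkeeping: your version costs an extra \textit{a priori} estimate on $w$ but yields a sharper H\"{o}lder exponent for that component and stays entirely within the $L_3$-scale, while the paper's is more economical in reusing already-proved bounds.
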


\begin{proof}
  The proof is similar to \cite[Lemma~2.10]{La2019}, but is recalled here for the sake of completeness. Let $T>0$ and $t_1,t_2 \in [t_0,T] \cap [t_0,T_m)$ such that $t_2>t_1$.
  
  First, by \eqref{a2PL} and \eqref{e3.9.1} we have
  \begin{equation*} 
  \| \partial_t v(t)\|_{W^1_3 (\Omega)} \le  \| u(t)\|_{W^1_3 (\Omega)} + \theta  \| v(t)\|_{W^1_3 (\Omega)} \le (1+\theta) C_8(T), \qquad t \in [t_0,T] \cap [t_0,T_m).
  \end{equation*}
  Hence,
  \begin{equation}\label{e3.10.1}
     \| v(t_2)- v(t_1)\|_{W^1_3 (\Omega)} \le \int_{t_1}^{t_2} \| \partial_t v(s)\|_{W^1_3 (\Omega)} \, \mathrm{d}s \le C(T) (t_2-t_1)\ .
  \end{equation}
  Furthermore, in view of H\"{o}lder's inequality, we obtain from Lemma~\ref{lem3.3} and Corollary~\ref{cor3.7}
  \begin{align}\label{e3.10.2}
    \| w(t_2)-w(t_1)\|_{W^1_3(\Omega)} 
    & \le C \| w(t_2)-w(t_1)\|_{W^1_\infty (\Omega)}^{1/3} \| w(t_2)-w(t_1)\|_{W^1_2 (\Omega)}^{2/3} \nonumber \\
    & \le C \left( \| w(t_1)\|_{W^1_\infty (\Omega)} + \| w(t_2)\|_{W^1_\infty (\Omega)} \right)^{1/3}
    \left( \int_{t_1}^{t_2} \| \partial_t w(s)\|_{W^1_2 (\Omega)} \, \mathrm{d}s \right)^{2/3} \nonumber \\
    &\le C (2 C_5(T))^{1/3}
    \left( \int_{t_1}^{t_2} \| \partial_t w(s)\|_{W^1_2 (\Omega)}^2 \, \mathrm{d}s \right)^{1/3} (t_2-t_1)^{1/3} \nonumber \\
    &\le C(T) (C_1(T))^{1/3} (t_2-t_1)^{1/3}.
  \end{align}
  Since $W^{4/3}_2(\Omega)$ is continuously embedded in $W^1_3(\Omega)$, interpolation inequalities (see \cite[Theorem~7.2]{Am1993}), H\"{o}lder's inequality, Lemma~\ref{lem3.8}, and Corollary~\ref{cor3.9} yield
  \begin{align}\label{e3.10.3}
    \| u(t_2)- u(t_1) \|_{W^1_3 (\Omega)} 
    &\le C \| u(t_2)- u(t_1) \|_{W^{4/3}_2(\Omega)} \nonumber \\
    &\le C \| u(t_2)- u(t_1) \|_{W^{2\gamma}_2 (\Omega)}^{2/3\gamma}
    \| u(t_2)- u(t_1) \|_{L_2 (\Omega)}^{2\delta} \nonumber \\
    &\le C \left( \| u(t_1) \|_{W^{2\gamma}_2 (\Omega)} +  \| u(t_2) \|_{W^{2\gamma}_2 (\Omega)} \right)^{2/3\gamma}  \left( \int_{t_1}^{t_2} \| \partial_t u(s) \|_{L_2 (\Omega)} \, \mathrm{d}s \right)^{2\delta} \nonumber \\
    &\le C (2C_7(T))^{2/3\gamma} \left( \int_{t_1}^{t_2} \| \partial_t u(s) \|_{L_2 (\Omega)}^2 \, \mathrm{d}s \right)^{\delta} (t_2-t_1)^{\delta} \nonumber \\
    &\le C(T) \left( C_6(T)\right)^{\delta} (t_2-t_1)^{\delta}\ .
  \end{align}
   Combining \eqref{e3.10.1}, \eqref{e3.10.2}, and \eqref{e3.10.3} completes the proof, since $0<\delta<1/3$.
\end{proof}

\begin{proof}[Proof of Theorem~\ref{theo1.1}]
  Proposition~\ref{prop3.1} and Corollary~\ref{cor3.2} imply the claim as soon as we know that $T_m = \infty$. Since $(u,v,w)\in BUC([0,t_0];W_3^1(\Omega;\mathbb{R}^3))$ by Proposition~\ref{prop3.1},  \eqref{e3.9.1} and Lemma~\ref{lem3.10} ensure that \eqref{e3.1.1} is satisfied for any $T>0$, and we indeed conclude that $T_m = \infty$.
\end{proof}

\section{Bounded solutions for small values of $M$}\label{sec4}

Let $M>0$. In this section, we assume that 
\begin{equation}\label{e4.0.1}
  (u_0,v_0,w_0) \in \mi_{M}, 
\end{equation}	
and denote the corresponding global solution to \eqref{aPL}--\eqref{iPL} by $(u,v,w)$, see Theorem~\ref{theo1.1}. Throughout this section, $b$ and $(b_i)_{i\ge 1}$ denote positive constants depending only on $\Omega$, $\theta$, $D$, $\alpha$, $u_0$, $v_0$, and $w_0$. Dependence upon additional parameters is indicated explicitly.

In order to show the boundedness of the solution to \eqref{aPL}--\eqref{iPL}, we first have a look at the evolution of the $L_1$-norms of $(u,v,w)$.

\begin{lemma}\label{lem4.1}
For all $t \ge 0$,
	\begin{align}
	  &\| (u+v)(t) \|_{L_1(\Omega)} = M = \| u_0+v_0 \|_{L_1(\Omega)}, \label{e4.1.1} \\
		&\| u(t) \|_{L_1(\Omega)} \le M \qquad\mbox{and}\qquad \| v(t) \|_{L_1(\Omega)} \le M, \label{e4.1.2} \\
		& \| w(t) \|_{L_1(\Omega)} \le \| w_0 \|_{L_1(\Omega)} + \frac{M}{\alpha}. \label{e4.1.3} 
	\end{align}
\end{lemma}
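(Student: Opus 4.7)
The proof is essentially a bookkeeping exercise combining mass conservation with an ODE comparison for the $L_1$-norm of $w$, so the plan is short and there is no serious obstacle.

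For \eqref{e4.1.1}, the identity $\|(u+v)(t)\|_{L_1(\Omega)}=M$ is simply the mass conservation already stated in \eqref{mass} of Theorem~\ref{theo1.1} (proved in Proposition~\ref{prop3.1} by integrating \eqref{a1PL}+\eqref{a2PL} over $\Omega$ and using \eqref{bPL} to kill the divergence term). Then \eqref{e4.1.2} follows at once from \eqref{e4.1.1} by the nonnegativity of $u$ and $v$ (also from Theorem~\ref{theo1.1}), since $\|u(t)\|_{L_1(\Omega)}=\int_\Omega u(t)\,\mathrm{d}x\le \int_\Omega (u+v)(t)\,\mathrm{d}x = M$, and symmetrically for $v$.

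For \eqref{e4.1.3}, the plan is to integrate \eqref{a3PL} over $\Omega$. The no-flux boundary condition $\nabla w\cdot \mathbf{n}=0$ on $\partial\Omega$ from \eqref{bPL}, combined with the divergence theorem, gives $\int_\Omega \Delta w\,\mathrm{d}x=0$, so that
\begin{equation*}
\frac{\mathrm{d}}{\mathrm{d}t}\|w(t)\|_{L_1(\Omega)} = -\alpha\|w(t)\|_{L_1(\Omega)} + \|v(t)\|_{L_1(\Omega)},
\end{equation*}
using also the nonnegativity of $w$ and $v$. Now invoking \eqref{e4.1.2} to bound $\|v(t)\|_{L_1(\Omega)}\le M$ yields the differential inequality $\frac{\mathrm{d}}{\mathrm{d}t}\|w(t)\|_{L_1(\Omega)} + \alpha\|w(t)\|_{L_1(\Omega)}\le M$. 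Integrating this linear ODE (by Gronwall, or directly using the integrating factor $e^{\alpha t}$) gives $\|w(t)\|_{L_1(\Omega)}\le e^{-\alpha t}\|w_0\|_{L_1(\Omega)} + \frac{M}{\alpha}(1-e^{-\alpha t})$, from which \eqref{e4.1.3} follows immediately by discarding the negative exponential factors.

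No step is delicate; the only point that one must double-check is that the regularity furnished by Theorem~\ref{theo1.1} is sufficient to justify differentiating $\int_\Omega w(t)\,\mathrm{d}x$ in time and applying the divergence theorem to $\Delta w(t)$, which it is, since $w\in C((0,\infty);W^2_{3,\mb}(\Omega))\cap C^1((0,\infty);L_3(\Omega))$ and $w\in C([0,\infty);W^1_{3,+}(\Omega))$ supplies continuity up to $t=0$.
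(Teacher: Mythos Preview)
Your proof is correct and follows essentially the same approach as the paper's own proof: both invoke the mass conservation \eqref{mass} for \eqref{e4.1.1}, deduce \eqref{e4.1.2} from nonnegativity, and integrate \eqref{a3PL} over $\Omega$ using \eqref{bPL} and \eqref{e4.1.2} to obtain the differential inequality $\frac{\mathrm{d}}{\mathrm{d}t}\|w\|_{L_1(\Omega)}+\alpha\|w\|_{L_1(\Omega)}\le M$, which is then integrated.
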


\begin{proof}
The identity \eqref{e4.1.1} is nothing but \eqref{mass}, and \eqref{e4.1.2} is a consequence thereof in view of the nonnegativity of $u$ and $v$. Using the nonnegativity of $v$ and $w$ as well as
\eqref{a3PL}, \eqref{bPL}, and \eqref{e4.1.2}, we obtain
$$
\frac{\mathrm{d}}{\mathrm{d}t} \| w \|_{L_1(\Omega)} + \alpha \| w \|_{L_1(\Omega)} = \| v \|_{L_1(\Omega)} \le M\ . $$
Hence
\begin{equation*}
\| w(t) \|_{L_1(\Omega)} \le e^{-\alpha t} \| w_0 \|_{L_1(\Omega)} + M \int_0^t e^{-\alpha(t-s)}\ \mathrm{d}s \le \| w_0 \|_{L_1(\Omega)} + \frac{M}{\alpha}
\end{equation*}
for all $t \ge 0$ as $\alpha >0$. 
\end{proof}

Next, as in \cite{BN1993, GZ1998, NSY1997}, we use the structure of the Liapunov functional $\ml$ defined in \eqref{dPL} and follow the strategy from \cite{La2019} (see also \cite{CS2015}). We begin with a lower bound on $\ml$ for $M$ appropriately small which relies on the Trudinger-Moser inequality and first note the following consequence thereof (see \cite[Proposition~2.3]{CY1988} and \cite[Section~2]{NSY1997}).

\begin{proposition}\label{prop4.3}
  There is $K_0 >0$ depending only on $\Omega$ such that
	$$\int_\Omega e^{|z|} \ \mathrm{d}x \le K_0 \, \exp \left(\frac{\| \nabla z\|_{L_2(\Omega)}^2}{8\pi} + \frac{\| z\|_{L_1(\Omega)}}{|\Omega|} \right) \qquad\mbox{for all } z \in W^1_2 (\Omega).$$	
\end{proposition}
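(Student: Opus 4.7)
The plan is to reduce the claim to the classical Moser--Trudinger inequality for $W^1_2(\Omega)$ functions without imposed boundary conditions, which asserts that there exists $C>0$ depending only on $\Omega$ such that
\[
\int_\Omega \exp\left(\frac{2\pi u^2}{\|\nabla u\|_{L_2(\Omega)}^2}\right) \mathrm{d}x \le C |\Omega|
\]
for every nonconstant $u \in W^1_2(\Omega)$ with $\int_\Omega u\, \mathrm{d}x = 0$. The sharp exponent here is $2\pi$ (rather than the $4\pi$ available when zero boundary values are imposed), and it is precisely what produces the $1/(8\pi)$ constant in the statement.

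First I would handle the mean-zero case. The idea is to bridge the quadratic bound above to a linear bound on $e^{|u|}$ via Young's inequality applied to $|u| = \bigl(|u|/\|\nabla u\|_{L_2(\Omega)}\bigr) \cdot \|\nabla u\|_{L_2(\Omega)}$, calibrated so that the two resulting contributions are $2\pi u^2/\|\nabla u\|_{L_2(\Omega)}^2$ and $\|\nabla u\|_{L_2(\Omega)}^2/(8\pi)$. Exponentiating this pointwise inequality and integrating against the Moser--Trudinger bound would yield
\[
\int_\Omega e^{|u|}\, \mathrm{d}x \le C|\Omega| \exp\left(\frac{\|\nabla u\|_{L_2(\Omega)}^2}{8\pi}\right)
\]
for every mean-zero $u \in W^1_2(\Omega)$ (the case $u$ constant being trivial).

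To pass to a general $z \in W^1_2(\Omega)$, I would set $\bar z := |\Omega|^{-1}\int_\Omega z\, \mathrm{d}x$ and apply the preceding step to $u := z - \bar z$, which has zero mean and satisfies $\nabla u = \nabla z$. The pointwise triangle inequality $|z| \le |u| + |\bar z|$, together with the elementary bound $|\bar z| \le \|z\|_{L_1(\Omega)}/|\Omega|$ coming from Jensen's inequality, would then give
\[
\int_\Omega e^{|z|}\, \mathrm{d}x \le e^{|\bar z|} \int_\Omega e^{|u|}\, \mathrm{d}x \le C|\Omega|\, \exp\left(\frac{\|\nabla z\|_{L_2(\Omega)}^2}{8\pi} + \frac{\|z\|_{L_1(\Omega)}}{|\Omega|}\right),
\]
so the claim follows with $K_0 := C|\Omega|$. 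There is no genuine obstacle: the only point requiring care is tuning the Young's weight to produce exactly the constant $1/(8\pi)$, which is a pure calibration step.
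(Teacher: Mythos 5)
Your proof is correct, and it recovers the standard derivation: the paper itself gives no proof of Proposition~4.3, instead citing [CY1988, Proposition~2.3] and [NSY1997, Section~2], where precisely this reduction (subtract the mean, apply the sharp Neumann Moser--Trudinger inequality with constant $2\pi$, and calibrate Young's inequality so that $|u| \le 2\pi u^2/\|\nabla u\|_{L_2(\Omega)}^2 + \|\nabla u\|_{L_2(\Omega)}^2/(8\pi)$) is carried out. Your handling of the constant case and the passage from $u=z-\bar z$ back to $z$ via $|\bar z|\le \|z\|_{L_1(\Omega)}/|\Omega|$ are both fine.
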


\begin{lemma}\label{lem4.4}
  Assume that \eqref{e4.0.1} is satisfied. There is $b_1>0$ such that, for all $t \ge 0$,
	\begin{align}
	  \ml (u(t), v(t), w(t)) & \le \ml (u_0,v_0,w_0) <\infty, \label{e4.4.1} \\
		\ml (u(t),v(t),w(t)) & \ge \frac{4\pi(1+\theta)D -M}{8\pi} \|\nabla w(t)\|_{L_2 (\Omega)}^2 + \frac{\alpha(1+\theta)}{2} \|w(t)\|_{L_2 (\Omega)}^2 \nonumber \\
		& \quad + \frac{1}{2} \|\left(D\Delta w - \alpha w + v \right)(t)\|_{L_2 (\Omega)}^2 - b_1. \label{e4.4.2}
	\end{align}
\end{lemma}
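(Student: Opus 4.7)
The plan for \eqref{e4.4.1}: finiteness of $\mathcal{L}(u_0,v_0,w_0)$ follows from the regularity of the initial datum. Indeed, since $W_3^1(\Omega)$ embeds continuously into $L_\infty(\Omega)$ in two space dimensions, $u_0, v_0$ are bounded, hence $L(u_0), L_\theta(v_0) \in L_1(\Omega)$ and $(u_0+v_0)w_0 \in L_1(\Omega)$; the remaining terms are controlled by $\|w_0\|_{W_2^2(\Omega)}$ and $\|v_0\|_{L_2(\Omega)}$, both finite since $(u_0,v_0,w_0) \in \mi_M$. The monotonicity bound then follows by integrating the identity \eqref{e4.2.1} from Lemma~\ref{lem4.2} on $(0,t)$ and using $\mathcal{D}(u,v,w) \ge 0$.

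For the lower bound \eqref{e4.4.2}, I introduce $E(t) := \|e^{w(t)}\|_{L_1(\Omega)}$ and $M_u(t) := \|u(t)\|_{L_1(\Omega)}$, $M_v(t) := \|v(t)\|_{L_1(\Omega)}$, so that $M_u + M_v = M$ by \eqref{e4.1.1}. Applying Jensen's inequality to the convex function $r \mapsto r\ln r$ with the probability density $f := e^w/E$ yields
\begin{equation*}
\int_\Omega u \ln(u/f)\, \mathrm{d}x \ge M_u \ln M_u,
\end{equation*}
which rearranges to $\int_\Omega u \ln u\, \mathrm{d}x - \int_\Omega u w\, \mathrm{d}x \ge M_u \ln(M_u/E)$. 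Together with the definition of $L$, this gives $\int_\Omega L(u)\, \mathrm{d}x - \int_\Omega u w\, \mathrm{d}x \ge M_u \ln(M_u/E) - M_u + |\Omega|$. Applying the same argument with $\tilde{v} := \theta v$ (whose $L_1$-norm equals $\theta M_v$) and dividing by $\theta$ yields $\int_\Omega L_\theta(v)\, \mathrm{d}x - \int_\Omega v w\, \mathrm{d}x \ge M_v \ln(\theta M_v/E) - M_v + |\Omega|/\theta$.

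Adding these two inequalities produces
\begin{equation*}
\int_\Omega \left( L(u) + L_\theta(v) - (u+v)w \right) \mathrm{d}x \ge M_u \ln M_u + M_v \ln(\theta M_v) - M \ln E - M + |\Omega|(1 + 1/\theta).
\end{equation*}
Since $M_u, M_v \in [0,M]$, the map $r \mapsto r\ln r$ and $r \mapsto r\ln(\theta r)$ are bounded from below on $[0,M]$ by constants depending only on $M$ and $\theta$, so the first two terms on the right are bounded below by a constant. It remains to control $-M \ln E$, for which I invoke the Trudinger--Moser-type bound of Proposition~\ref{prop4.3} applied to $w \ge 0$:
\begin{equation*}
\ln E \le \ln K_0 + \frac{\|\nabla w\|_{L_2(\Omega)}^2}{8\pi} + \frac{\|w\|_{L_1(\Omega)}}{|\Omega|}.
\end{equation*}
By \eqref{e4.1.3}, $\|w(t)\|_{L_1(\Omega)}$ is bounded by a constant depending only on $\|w_0\|_{L_1(\Omega)}$, $M$, and $\alpha$, so $-M\ln E \ge -M\|\nabla w\|_{L_2(\Omega)}^2/(8\pi) - b_1'$ for some $b_1' > 0$. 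Adding the remaining quadratic terms from the definition \eqref{dPL} of $\mathcal{L}$ and grouping the $\|\nabla w\|_{L_2(\Omega)}^2$ contributions gives the coefficient $\frac{(1+\theta)D}{2} - \frac{M}{8\pi} = \frac{4\pi(1+\theta)D - M}{8\pi}$, which is exactly \eqref{e4.4.2}.

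The main technical point is the simultaneous Jensen-style treatment of $u$ and $v$ against the common weight $e^w$: since $\|u\|_{L_1}$ and $\|v\|_{L_1}$ fluctuate in time while only their sum $M$ is conserved, one needs the observation that both $M_u \ln M_u$ and $M_v \ln(\theta M_v)$ are \emph{uniformly} bounded below on $[0,M]$ in order to produce a time-independent constant $b_1$; the rest is bookkeeping. The factor $8\pi$ in the critical mass is a direct inheritance from the Trudinger--Moser constant, and the amplification $(1+\theta)$ in $M_c = 4\pi(1+\theta)D$ comes from the prefactor of the $\frac{1}{2}(D\|\nabla w\|_{L_2}^2 + \alpha \|w\|_{L_2}^2)$ term in $\mathcal{L}$, reflecting precisely the contribution of the second subpopulation $v$ to the conserved mass.
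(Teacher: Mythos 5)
Your proof is correct and takes essentially the same route as the paper: both reduce to the relative-entropy (Jensen) bound $\int u\ln u - \int u w \ge M_u\ln(M_u/E)$ (you via convexity of $r\mapsto r\ln r$, the paper via convexity of $-\ln$, which are equivalent), then apply Proposition~\ref{prop4.3} together with the $L_1$-bound \eqref{e4.1.3} on $w$. The only cosmetic difference is that the paper keeps $L(M_u)+L_\theta(M_v)$ and discards it by nonnegativity, whereas you bound $M_u\ln M_u$ and $M_v\ln(\theta M_v)$ below on $[0,M]$; both are fine.
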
 

\begin{proof}
We first observe that \eqref{e4.0.1} guarantees that $\ml (u_0,v_0,w_0) \in \mathbb{R}$, while Theorem~\ref{theo1.1} ensures that $\ml (u(t), v(t), w(t)) \in \mathbb{R}$ for all $t>0$. 
 
 Next, using the convexity of $s\mapsto -\ln (s)$ and Jensen's inequality as well as the nonnegativity of $u$ and $v$, we obtain
	\begin{align}\label{e4.4.3}
	  0 &=  - \|u\|_{L_1(\Omega)} \ln \left( \int_\Omega \frac{u}{\|u\|_{L_1(\Omega)}} \frac{e^w \|u\|_{L_1(\Omega)}}{u \|e^w\|_{L_1(\Omega)}}\ \mathrm{d}x  \right) \nonumber \\
		&\le - \|u\|_{L_1(\Omega)} \int_\Omega \frac{u}{\|u\|_{L_1(\Omega)}} \, \ln \left( \frac{e^w \|u\|_{L_1(\Omega)}}{u \|e^w\|_{L_1(\Omega)}} \right)\ \mathrm{d}x  \nonumber \\
		&= \int_\Omega (u \ln u - uw)\ \mathrm{d}x + \|u\|_{L_1(\Omega)} \ln \left( \|e^w\|_{L_1(\Omega)} \right) - \|u\|_{L_1(\Omega)} \ln \left( \|u\|_{L_1(\Omega)} \right),
	\end{align}
	as well as
	\begin{align}\label{e4.4.4}
	  0 &=  - \|v\|_{L_1(\Omega)} \ln \left( \int_\Omega \frac{v}{\|v\|_{L_1(\Omega)}} \frac{e^w \|\theta v\|_{L_1(\Omega)}}{\theta v \|e^w\|_{L_1(\Omega)}}\ \mathrm{d}x \right) \nonumber \\
		&\le - \|v\|_{L_1(\Omega)} \int_\Omega \frac{v}{\|v\|_{L_1(\Omega)}} \, \ln \left( \frac{e^w \|\theta v\|_{L_1(\Omega)}}{\theta v \|e^w\|_{L_1(\Omega)}} \right)\ \mathrm{d}x  \nonumber \\
		&= \int_\Omega (v \ln (\theta v) - vw)\ \mathrm{d}x + \|v\|_{L_1(\Omega)} \ln \left( \|e^w\|_{L_1(\Omega)} \right) - \|v\|_{L_1(\Omega)} \ln \left( \|\theta v\|_{L_1(\Omega)} \right).
	\end{align}
	In view of the nonnegativity of $(u,v,w)$, $L$, and $L_\theta$, we infer from \eqref{dPL}, \eqref{e4.1.1}, \eqref{e4.4.3}, \eqref{e4.4.4}, and Proposition~\ref{prop4.3} that
	\begin{align}\label{e4.4.5}
	  \ml (u,v,w) & = \int_\Omega (u \ln{u} - u w )\ \mathrm{d}x + \int_\Omega (v\ln{(\theta v)} - vw)\ \mathrm{d}x - \|u\|_{L_1(\Omega)} + |\Omega| - \|v\|_{L_1(\Omega)} + \frac{|\Omega|}{\theta} \nonumber \\
	  & \quad + \frac{1+\theta}{2} \left( D \|\nabla w\|_{L_2 (\Omega)}^2 + \alpha \|w\|_{L_2 (\Omega)}^2 \right) + \frac{1}{2} \|D\Delta w - \alpha w + v\|_{L_2 (\Omega)}^2 \nonumber \\
	  & \ge -M  \ln \left( \|e^w\|_{L_1(\Omega)} \right) + L\left( \|u\|_{L_1(\Omega)} \right) 
		+ L_\theta\left( \|v\|_{L_1(\Omega)} \right) + (|\Omega|-1)\left( 1+ \frac{1}{\theta} \right) \nonumber \\
		& \quad + \frac{1+\theta}{2} \left( D \|\nabla w\|_{L_2 (\Omega)}^2 + \alpha \|w\|_{L_2 (\Omega)}^2 \right) + \frac{1}{2} \|D\Delta w - \alpha w + v\|_{L_2 (\Omega)}^2 \nonumber \\
		& \ge \frac{4\pi(1+\theta)D -M}{8\pi} \|\nabla w\|_{L_2 (\Omega)}^2 + \frac{\alpha(1+\theta)}{2} \|w\|_{L_2 (\Omega)}^2 + \frac{1}{2} \|D\Delta w - \alpha w + v\|_{L_2 (\Omega)}^2 \nonumber \\
		& \quad - M \ln K_0 - \frac{M}{|\Omega|} \|w\|_{L_1 (\Omega)} + (|\Omega|-1)\left( 1+ \frac{1}{\theta} \right).
	\end{align}
	Inserting \eqref{e4.1.3}, we obtain \eqref{e4.4.2}, while \eqref{e4.4.1} immediately follows from \eqref{e4.2.1} since $\md \ge 0$.
\end{proof}

Hence, for $M \in (0, 4\pi(1+\theta)D)$, the Liapunov functional is bounded from below and we obtain further refined estimates.

\begin{lemma}\label{lem4.5}
  Assume that \eqref{e4.0.1} is satisfied with $M \in (0, 4\pi(1+\theta)D)$. There is $b_2>0$ such that, for all $t \ge 0$,
	\begin{align*}
	  \|u(t) \ln (u(t)) \|_{L_1 (\Omega)} + \|v(t) \ln (v(t)) \|_{L_1 (\Omega)} + \|w(t) \|_{W^1_2 (\Omega)} + \| \partial_t w(t) \|_{L_2 (\Omega)} & \le b_2,\\
	  \int_0^\infty \|\partial_t w(s) \|_{W^1_2 (\Omega)}^2 \ \mathrm{d}s &\le b_2.
	\end{align*}
\end{lemma}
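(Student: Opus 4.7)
The strategy is to combine the Liapunov dissipation identity \eqref{e4.2.1} with the two-sided control of $\ml$ provided by Lemma~\ref{lem4.4}, and to absorb the cross term $\int (u+v)w$ by a Young inequality against $L(u)+L_\theta(v)$, with the Trudinger--Moser inequality taming the resulting exponential.

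First I would record that, since $\md\ge 0$, \eqref{e4.2.1} gives the monotonicity $\ml(u(t),v(t),w(t))\le \ml(u_0,v_0,w_0)$. Combining this with the lower bound \eqref{e4.4.2}, whose leading coefficient $(4\pi(1+\theta)D-M)/(8\pi)$ is \emph{strictly positive} because of the subcritical assumption $M<4\pi(1+\theta)D$, immediately yields uniform-in-$t$ bounds on $\|\nabla w(t)\|_{L_2}$, $\|w(t)\|_{L_2}$ and $\|(D\Delta w-\alpha w+v)(t)\|_{L_2}$. By \eqref{a3PL} the last quantity is $\|\partial_t w(t)\|_{L_2}$, which together with the $W^1_2$-control of $w$ delivers the claimed bound on $\|w(t)\|_{W^1_2}+\|\partial_t w(t)\|_{L_2}$.

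Next, for the entropy pieces I would start from the defining identity \eqref{dPL}, which, after dropping the nonnegative quadratic $w$-terms, yields
\begin{equation*}
\int_\Omega L(u)\,\mathrm{d}x + \int_\Omega L_\theta(v)\,\mathrm{d}x \le \ml(u_0,v_0,w_0) + \int_\Omega (u+v)w\,\mathrm{d}x.
\end{equation*}
To absorb the right-hand side, I would apply the rescaled Young inequality $ab \le \epsilon(a\ln a - a) + \epsilon e^{b/\epsilon}$ (for $a\ge 0$, $b\in\mathbb{R}$, $\epsilon>0$) with $a=u$, $b=w$, and analogously with $a=\theta v$ (compensating by a factor $1/\theta$), to obtain, for any $\epsilon\in (0,1)$,
\begin{equation*}
\int_\Omega (u+v)w\,\mathrm{d}x \le \epsilon\left( \int_\Omega L(u)\,\mathrm{d}x + \int_\Omega L_\theta(v)\,\mathrm{d}x \right) + \epsilon\left(1+\tfrac{1}{\theta}\right) \int_\Omega e^{w/\epsilon}\,\mathrm{d}x.
\end{equation*}
Proposition~\ref{prop4.3} applied to $z=w/\epsilon$, together with the $\|\nabla w(t)\|_{L_2}$-bound from the previous step and the control $\|w(t)\|_{L_1}\le \|w_0\|_{L_1}+M/\alpha$ from Lemma~\ref{lem4.1}, bounds $\int_\Omega e^{w(t)/\epsilon}\,\mathrm{d}x$ by a constant depending only on $\epsilon$ and the data. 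Choosing $\epsilon=1/2$ and absorbing produces a uniform bound on $\int_\Omega L(u)\,\mathrm{d}x+\int_\Omega L_\theta(v)\,\mathrm{d}x$. Since $\|u(t)\|_{L_1}+\|v(t)\|_{L_1}\le 2M$ and $r\mapsto r\ln r$ is bounded below by $-1/e$ on $[0,\infty)$, the identities $L(u)=u\ln u - u+1$ and $L_\theta(v)=v\ln(\theta v)-v+1/\theta$ convert this into the claimed bounds on $\|u\ln u\|_{L_1}$ and $\|v\ln v\|_{L_1}$.

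Finally, for the space-time bound on $\partial_t w$, I would integrate \eqref{e4.2.1} on $(0,t)$ and invoke the lower bound $\ml(u(t),v(t),w(t))\ge -b_1$ from Lemma~\ref{lem4.4} to deduce $\int_0^\infty \md(u,v,w)(s)\,\mathrm{d}s\le \ml(u_0,v_0,w_0)+b_1$. By \eqref{a3PL}, the last two terms in the expression \eqref{ePL} of $\md$ coincide with $D\|\nabla\partial_t w\|_{L_2}^2 + (1+\theta+\alpha)\|\partial_t w\|_{L_2}^2$, which controls $\|\partial_t w\|_{W^1_2}^2$, and the claim follows. The main obstacle is the Young--Trudinger--Moser absorption in the preceding paragraph: its feasibility depends crucially on the $\|\nabla w\|_{L_2}$-bound having been secured first, and, unlike the situation in \cite{La2019}, the presence of $v$ in the conserved mass forces the simultaneous absorption of both $L(u)$ and $L_\theta(v)$.
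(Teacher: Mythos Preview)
Your proposal is correct and follows essentially the same route as the paper: first use Lemma~\ref{lem4.4} with the subcritical coefficient to obtain the uniform $W^1_2$- and $L_2$-bounds on $w$ and $\partial_t w$, then absorb the cross term $\int_\Omega (u+v)w$ via a Young-type inequality combined with the Trudinger--Moser estimate (using the already established bound on $\|\nabla w\|_{L_2}$), and finally integrate \eqref{e4.2.1} against the lower bound on $\ml$ to get the space-time estimate. The only cosmetic difference is that the paper writes the Young inequality in the form $AB\le e^{A-1}+B\ln B$ with the specific choice $A=2w$, $B=u/2$ (respectively $B=v/2$), whereas you use the equivalent $\epsilon$-parametrized version and then set $\epsilon=1/2$.
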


\begin{proof}
  Since \eqref{a3PL} and Lemma~\ref{lem4.4} imply that
	$$\min \left\{ \frac{4\pi(1+\theta)D -M}{8\pi}, \frac{\alpha(1+\theta)}{2}, \frac{1}{2} \right\} \left( \|w(t) \|_{W^1_2 (\Omega)}^2 + \| \partial_t w(t) \|_{L_2 (\Omega)}^2\right)
	\le b_1 + \ml (u_0,v_0,w_0)$$
	and $M \in (0, 4\pi(1+\theta)D)$, we get
	\begin{equation}\label{e4.5.1}
	  \|w(t) \|_{W^1_2 (\Omega)} + \| \partial_t w(t) \|_{L_2 (\Omega)} \le b, \qquad t \ge 0. 
	\end{equation}	
  Next, \eqref{dPL}, \eqref{e4.1.1}, \eqref{e4.1.2}, \eqref{e4.4.1}, and the Young inequality $AB \le e^{A-1}+ B \ln B$ for $A,B >0$ yield
	\begin{align*}
	  & \int_\Omega \left( u \ln u + v \ln (\theta v) \right)\ \mathrm{d}x \\
		&\quad \le \ml (u, v, w) + \int_\Omega (u+v)\ \mathrm{d}x + \int_\Omega (u+v)w\ \mathrm{d}x \\
		&\quad \le \ml (u_0,v_0,w_0) + M + \int_\Omega \left( \frac{u}{2} \ln \left(\frac{u}{2} \right)  + \frac{v}{2} \ln \left(\frac{v}{2} \right) + 2 e^{2w-1}\right)\ \mathrm{d}x \\
		&\quad \le \ml (u_0,v_0,w_0) + M + \frac{1}{2} \int_\Omega \left( u \ln u  + v \ln (\theta v) \right)\ \mathrm{d}x + 2 \int_\Omega e^{2w-1}\ \mathrm{d}x - \frac{\ln\theta}{2} \int_\Omega v\ \mathrm{d}x \\
		&\quad \le \ml (u_0,v_0,w_0) + \frac{1}{2} \int_\Omega \left( u \ln u  + v \ln (\theta v) \right)\ \mathrm{d}x + 2 \int_\Omega e^{2w}\ \mathrm{d}x + M (1 + |\ln\theta|)\ . 
	\end{align*}
	In view of Proposition~\ref{prop4.3} this implies
	$$\int_\Omega \left( u \ln u + v \ln (\theta v) \right)\ \mathrm{d}x \le 4 K_0 \exp \left(\frac{\| \nabla w\|_{L_2(\Omega)}^2}{2\pi} + 2\frac{\| w\|_{L_1(\Omega)}}{|\Omega|} \right) + b\ ,$$
	so that, in view of $z \ln z \ge -1/e$ for all $z \ge 0$, we conclude from \eqref{e4.1.2}, \eqref{e4.1.3}, and \eqref{e4.5.1} that
	\begin{align}\label{e4.5.2}
	   \|u \ln u \|_{L_1 (\Omega)} & + \|v \ln v \|_{L_1 (\Omega)} \nonumber \\
	  & \le \int_\Omega \left( u \ln u + v \ln v \right)\ \mathrm{d}x +  \frac{4 |\Omega|}{e} \nonumber \\
	  & \le \int_\Omega \left( u \ln u + v \ln (\theta v) \right)\ \mathrm{d}x - \ln \theta \int_\Omega v\ \mathrm{d}x + \frac{4 |\Omega|}{e} \nonumber \\
	  & \le \int_\Omega \left( u \ln u + v \ln (\theta v) \right)\ \mathrm{d}x +M | \ln \theta | + \frac{4 |\Omega|}{e}\le b\ , \qquad t \ge 0\ .
	\end{align}
	Finally, we deduce from \eqref{a3PL}, \eqref{ePL}, and Lemmas~\ref{lem4.2} and \ref{lem4.4} that
	\begin{align*}
	  \min \{D, 1+\theta+ \alpha\} \int_0^t \|\partial_t w(s) \|_{W^1_2 (\Omega)}^2 \ \mathrm{d}s 
		& \le \int_0^t \md (u(s), v(s), w(s)) \ \mathrm{d}s \\
		& \le \ml (u_0,v_0,w_0) - \ml (u(t),v(t),w(t)) \\
		& \le \ml (u_0,v_0,w_0) +b_1 
	\end{align*}
	which, in view of \eqref{e4.5.1} and \eqref{e4.5.2}, completes the proof.
\end{proof}

Starting from the previous estimates, we derive further time-independent estimates for the solution by using the ideas from \cite{La2019} as well as the following inequality, which is
\cite[Equation~(22)]{BHN1994}:

Given $\eta >0$, there is a positive constant $\kappa_\eta$ depending only on $\eta$ and $\Omega$ such that
\begin{equation}\label{e4.5.3}
  \| z \|_{L_3 (\Omega)}^3 \le \eta \| z \|_{W^1_2 (\Omega)}^2 \| z \ln |z| \|_{L_1 (\Omega)} + \kappa_\eta \| z \|_{L_1 (\Omega)} \qquad\mbox{for all } z \in W^1_2(\Omega).
\end{equation}

\begin{lemma}\label{lem4.6}
  Assume that \eqref{e4.0.1} is satisfied with $M \in (0, 4\pi(1+\theta)D)$. There is $b_3>0$ such that, for all $t \ge 0$,
	$$\| u(t) \|_{L_2 (\Omega)} + \| v(t) \|_{L_3 (\Omega)} \le b_3\ .$$
\end{lemma}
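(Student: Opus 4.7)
The plan is to combine the evolutions of $\|u(t)\|_{L_2(\Omega)}^2$ and $\|v(t)\|_{L_3(\Omega)}^3$ into a scalar differential inequality of the form $Y' + cY \le C$ for
\begin{equation*}
Y(t) := \|u(t)\|_{L_2(\Omega)}^2 + 2\kappa \|v(t)\|_{L_3(\Omega)}^3,
\end{equation*}
with $\kappa>0$ to be chosen, so that Gronwall's inequality yields uniform boundedness. Note that $Y(0)$ is finite since $(u_0,v_0) \in W^1_3(\Omega;\RR^2)$ embeds continuously into $L_\infty(\Omega;\RR^2)$ in two dimensions.

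I would first test \eqref{a1PL} with $u$ and integrate by parts using \eqref{bPL}, rewriting $\int u \nabla u \cdot \nabla w = -\tfrac{1}{2}\int u^2 \Delta w$, then substitute $\Delta w = \tfrac{1}{D}(\partial_t w + \alpha w -v)$ from \eqref{a3PL} and discard the sign-good contribution $-\tfrac{\alpha}{2D}\int u^2 w \le 0$. Multiplying \eqref{a2PL} by $v^2$ and integrating yields $\tfrac{1}{3}\tfrac{\mathrm{d}}{\mathrm{d}t}\|v\|_{L_3(\Omega)}^3 + \theta \|v\|_{L_3(\Omega)}^3 = \int u v^2\,\mathrm{d}x$. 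Summing produces
\begin{equation*}
\tfrac{\mathrm{d}Y}{\mathrm{d}t} + 2\|\nabla u\|_{L_2(\Omega)}^2 + 2\|u\|_{L_2(\Omega)}^2 + 6\kappa\theta \|v\|_{L_3(\Omega)}^3 \le \tfrac{1}{D}\bigl|\smallint u^2\partial_tw\bigr| + \tfrac{1}{D}\smallint u^2 v + 2\theta \smallint uv + 6\kappa \smallint uv^2.
\end{equation*}

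The four right-hand side terms are then bounded via Hölder, Young, and the $L_1$--$L_3$ interpolations $\|u\|_{L_2(\Omega)}^2 \le M^{1/2}\|u\|_{L_3(\Omega)}^{3/2}$ and $\|v\|_{L_2(\Omega)}^2 \le M^{1/2}\|v\|_{L_3(\Omega)}^{3/2}$ (using $\|u\|_{L_1(\Omega)}, \|v\|_{L_1(\Omega)} \le M$ from Lemma~\ref{lem4.1}), together with the two-dimensional Gagliardo-Nirenberg inequality $\|u\|_{L_4(\Omega)}^2 \le C(\|\nabla u\|_{L_2(\Omega)}\|u\|_{L_2(\Omega)} + \|u\|_{L_2(\Omega)}^2)$ and the uniform bound $\|\partial_t w\|_{L_2(\Omega)} \le b_2$ from Lemma~\ref{lem4.5}. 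With the Young weights carefully scaled in the cross terms $\int u^2 v$, $\int uv^2$, $\int uv$, the cumulative $\|v\|_{L_3(\Omega)}^3$ contribution on the right stays strictly below $6\kappa\theta$ and is absorbed into the left, leaving a residual of the form $A\|u\|_{L_3(\Omega)}^3 + \epsilon \|\nabla u\|_{L_2(\Omega)}^2 + C$. Since $\|u\ln u\|_{L_1(\Omega)}$ is bounded by Lemma~\ref{lem4.5}, the inequality \eqref{e4.5.3} applied to $u$ gives $\|u\|_{L_3(\Omega)}^3 \le \eta b_2 \|u\|_{W^1_2(\Omega)}^2 + \kappa_\eta M$, and choosing $\eta, \epsilon$ small enough that $A\eta b_2 + \epsilon < 2$ produces
\begin{equation*}
\tfrac{\mathrm{d}Y}{\mathrm{d}t} + c_1 \|u\|_{L_2(\Omega)}^2 + c_3 \|v\|_{L_3(\Omega)}^3 \le C''
\end{equation*}
for positive $c_1, c_3, C''$. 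Since $c_1\|u\|_{L_2(\Omega)}^2 + c_3\|v\|_{L_3(\Omega)}^3 \ge c Y$ with $c := \min\{c_1, c_3/(2\kappa)\} > 0$, Gronwall gives $Y(t) \le Y(0) + C''/c$, from which the claim follows.

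The main obstacle is the delicate balancing of Young's inequality weights. The chemotaxis identity forces both $\int u^2 v$ and the dangerous $\int u^2 \partial_t w$ onto the right-hand side, while the reaction term $\int uv^2$ appearing in the $v$-equation couples the two components nontrivially at the level of $L_3$. One must choose $\kappa$ and the Young parameters so that the cumulative $\|v\|_{L_3(\Omega)}^3$ surplus is strictly dominated by the intrinsic $6\kappa\theta\|v\|_{L_3(\Omega)}^3$ dissipation, while leaving enough room for \eqref{e4.5.3} to absorb the residual $\|u\|_{L_3(\Omega)}^3$ into the $u$-dissipation $\|u\|_{W_2^1(\Omega)}^2$. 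The $L_1$--$L_3$ interpolation above is essential to convert the $\|u\|_{L_2(\Omega)}^2$ contributions produced by the $\partial_t w$ estimate into a form that \eqref{e4.5.3} can absorb, rather than leaving an unbounded linear growth term.
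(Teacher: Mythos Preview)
Your proposal is correct and follows essentially the same route as the paper: test \eqref{a1PL} with $u$ and \eqref{a2PL} with $v^2$, substitute $\Delta w$ via \eqref{a3PL}, combine into a single functional $Y=\|u\|_{L_2}^2+\mathrm{const}\cdot\|v\|_{L_3}^3$, control $\|u\|_{L_3}^3$ through \eqref{e4.5.3} using the $\|u\ln u\|_{L_1}$ bound from Lemma~\ref{lem4.5}, and close by Gronwall. The only cosmetic differences are that the paper takes $\kappa=1$, handles $2\theta\int uv-2\|u\|_{L_2}^2$ by completing the square, and absorbs the residual $\|u\|_{L_2}^2$ at the end via the Gagliardo--Nirenberg estimate $\|u\|_{L_2}^4\le b(\|\nabla u\|_{L_2}^2 M^2+M^4)$ rather than your $L_1$--$L_3$ interpolation feeding back into \eqref{e4.5.3}.
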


\begin{proof}
It follows from \eqref{aPL}, \eqref{e4.1.2}, H\"{o}lder's and Young's inequalities, and the nonnegativity of $u$ and $w$ 
\begin{align*}
\frac{\mathrm{d}}{\mathrm{d}t} \| u \|_{L_2 (\Omega)}^2 & + 2 \| \nabla u \|_{L_2 (\Omega)}^2 = \int_\Omega \left( - u^2 \Delta w + 2\theta uv - 2u^2\right)\ \mathrm{d}x \\
&\le  \frac{1}{D} \int_\Omega u^2\left(v-\alpha w - \partial_t w \right)\ \mathrm{d}x - 2 \left\| u - \frac{\theta v}{2} \right\|_{L_2(\Omega)}^2 + \frac{\theta^2}{2} \| v\|_{L_2(\Omega)}^2 \\
&\le \frac{1}{D} \| v \|_{L_3 (\Omega)} \| u \|_{L_3 (\Omega)}^2 + \frac{1}{D} \| \partial_t w\|_{L_2 (\Omega)} \| u \|_{L_4 (\Omega)}^2 + \frac{2\theta}{3} \| v \|_{L_3 (\Omega)}^3 + \frac{\theta^4}{3} \\
&\le \theta \| v \|_{L_3 (\Omega)}^3 + \frac{2}{3\theta^{1/2} D^{3/2}} \| u \|_{L_3 (\Omega)}^3 + \frac{1}{D} \| \partial_t w\|_{L_2 (\Omega)} \| u \|_{L_4 (\Omega)}^2 + b \ .
\end{align*}
Next, Gagliardo-Nirenberg's inequality yields
\begin{align*}
\frac{\mathrm{d}}{\mathrm{d}t} \| u \|_{L_2 (\Omega)}^2 + 2 \| \nabla u \|_{L_2 (\Omega)}^2 & \le \theta \|v\|_{L_3(\Omega)}^3 + b \left( 1 + \|u\|_{L_3(\Omega)}^3 \right) \\
& \quad + b \| \partial_t w\|_{L_2 (\Omega)} \left( \| \nabla u \|_{L_2 (\Omega)} \| u \|_{L_2 (\Omega)} + \| u \|_{L_2 (\Omega)}^2 \right) \\
& \le \theta \| v \|_{L_3 (\Omega)}^3 + b \left( 1 + \| u \|_{L_3 (\Omega)}^3 \right) + \frac{1}{2} \| \nabla u \|_{L_2 (\Omega)}^2 \\
& \quad + b \left(\| \partial_t w\|_{L_2 (\Omega)} + \| \partial_t w\|_{L_2 (\Omega)}^2\right) \| u \|_{L_2 (\Omega)}^2\ .
\end{align*}
In view of Lemma~\ref{lem4.5}, this implies
\begin{equation}\label{e4.6.1}
\frac{\mathrm{d}}{\mathrm{d}t} \| u \|_{L_2 (\Omega)}^2 + \frac{3}{2} \| \nabla u \|_{L_2 (\Omega)}^2 \le \theta \| v \|_{L_3 (\Omega)}^3 + b \left( 1 + \| u \|_{L_3 (\Omega)}^3 + b_2(1+b_2) \| u \|_{L_2 (\Omega)}^2 \right)  \ .
\end{equation}
Moreover, using \eqref{a2PL} along with Young's inequality, we obtain
\begin{equation*}
\frac{1}{3} \frac{\mathrm{d}}{\mathrm{d}t} \| v \|_{L_3 (\Omega)}^3 + \theta \| v \|_{L_3 (\Omega)}^3 = \int_\Omega uv^2\ \mathrm{d}x \le \frac{4}{3\theta^2} \| u \|_{L_3 (\Omega)}^3 +  \frac{\theta}{3} \| v \|_{L_3 (\Omega)}^3\ .
\end{equation*}
Hence,
\begin{equation}\label{e4.6.2}
\frac{\mathrm{d}}{\mathrm{d}t} \| v \|_{L_3 (\Omega)}^3 + 2\theta \| v \|_{L_3 (\Omega)}^3 \le \frac{4}{\theta^2} \| u \|_{L_3 (\Omega)}^3\ .
\end{equation}
Defining $Y := \| u \|_{L_2 (\Omega)}^2 + \| v \|_{L_3 (\Omega)}^3$, we deduce from \eqref{e4.6.1}, \eqref{e4.6.2} along with \eqref{e4.1.2}, \eqref{e4.5.3}, and Lemma~\ref{lem4.5} that, for $\eta >0$,
\begin{align*}
\frac{\mathrm{d}Y}{\mathrm{d}t} + \frac{3}{2} \| \nabla u \|_{L_2 (\Omega)}^2 + \theta \| v \|_{L_3 (\Omega)}^3 &\le b \left( 1 + \| u \|_{L_3 (\Omega)}^3 + \| u \|_{L_2 (\Omega)}^2 \right) \\
&\le b \left( 1 + \eta b_2 \| u \|_{W^1_2 (\Omega)}^2 + \kappa_\eta M + \| u \|_{L_2 (\Omega)}^2 \right) \\
&\le \eta b_4 \| \nabla u \|_{L_2 (\Omega)}^2 + b \left( 1 + \| u \|_{L_2 (\Omega)}^2 +\kappa_\eta \right)\ .
\end{align*}
Choosing $\eta := 1/2b_4>0$, we find
\begin{align*}
 \frac{\mathrm{d}Y}{\mathrm{d}t} + \| \nabla u \|_{L_2 (\Omega)}^2 + \theta \| v \|_{L_3 (\Omega)}^3
\le b \left( 1 + \| u \|_{L_2 (\Omega)}^2 \right)\ .
\end{align*}
Owing to the Gagliardo-Nirenberg and Young inequalities along with \eqref{e4.1.2}, 
\begin{align*}
\|u\|_{L_2(\Omega)}^4 & \le b \left( \|\nabla u\|_{L_2(\Omega)}^2 M^2 + M^4 \right)\ , \\
\|u\|_{L_2(\Omega)}^2 & \le b_5 \|u\|_{L_2(\Omega)}^4 + \frac{1}{b_5}\ ,
\end{align*}
so that, using once more Young's inequality, 
\begin{align*}
\frac{\mathrm{d}Y}{\mathrm{d}t} + \|u\|_{L_2(\Omega)}^2 + b_5 \|u\|_{L_2(\Omega)}^4 + \theta \| v \|_{L_3 (\Omega)}^3 & \le \frac{\mathrm{d}Y}{\mathrm{d}t} + 2b_5 \|u\|_{L_2(\Omega)}^4 + \theta \| v \|_{L_3 (\Omega)}^3 + \frac{1}{b_5} \\
& \le b \left( 1 + \| u \|_{L_2 (\Omega)}^2 \right) + \frac{1}{b_5} \le b_5 \|u\|_{L_2(\Omega)}^4 + b \ .
\end{align*}
Consequently,
\begin{align*}
 \frac{\mathrm{d}Y}{\mathrm{d}t} + \min\{1,\theta\} Y \le b\ , \qquad t>0\ ,
\end{align*}
from which Lemma~\ref{lem4.6} follows after integration with respect to time.
\end{proof}

\begin{corollary}\label{cor4.7}
 Assume that \eqref{e4.0.1} is satisfied with $M \in (0, 4\pi(1+\theta)D)$. There is $b_6 >0$ such that, for all $t \ge 0$,
\begin{equation*}
\|\Delta w(t)\|_{L_2(\Omega)} + \| w(t) \|_{W^1_3 (\Omega)}  \le b_6.
\end{equation*}
\end{corollary}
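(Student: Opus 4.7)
The plan is to exploit the parabolic equation \eqref{a3PL} algebraically, solving for $D\Delta w$ in terms of quantities already controlled by Lemmas~\ref{lem4.5} and~\ref{lem4.6}. Specifically, \eqref{a3PL} rearranges to
\begin{equation*}
D \Delta w = \partial_t w + \alpha w - v\ ,
\end{equation*}
so that the triangle inequality in $L_2(\Omega)$ gives
\begin{equation*}
D \|\Delta w(t)\|_{L_2(\Omega)} \le \|\partial_t w(t)\|_{L_2(\Omega)} + \alpha \|w(t)\|_{L_2(\Omega)} + \|v(t)\|_{L_2(\Omega)}\ , \qquad t\ge 0\ .
\end{equation*}
The first two terms on the right are bounded by $b_2$ via Lemma~\ref{lem4.5}, while H\"older's inequality together with Lemma~\ref{lem4.6} yields $\|v(t)\|_{L_2(\Omega)} \le |\Omega|^{1/6} b_3$. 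This furnishes a uniform bound on $\|\Delta w(t)\|_{L_2(\Omega)}$.

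Next, since $w(t)\in W^2_{3,\mb}(\Omega)\subset W^2_{2,\mb}(\Omega)$ by Theorem~\ref{theo1.1}, the Calder\'on-Zygmund estimate already invoked in the proof of Corollary~\ref{cor3.4} applies and gives
\begin{equation*}
\|w(t)\|_{W_2^2(\Omega)} \le C \bigl( \|\Delta w(t)\|_{L_2(\Omega)} + \|w(t)\|_{L_2(\Omega)} \bigr) \le b\ , \qquad t\ge 0\ ,
\end{equation*}
the last inequality using once more Lemma~\ref{lem4.5}. Finally, because $\Omega\subset\mathbb{R}^2$, the Sobolev embedding $W_2^2(\Omega)\hookrightarrow W_3^1(\Omega)$ delivers the claimed bound on $\|w(t)\|_{W_3^1(\Omega)}$.

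There is no substantial obstacle here: the estimate is a purely linear consequence of \eqref{a3PL} combined with the $L_2$-bounds on $w$, $\partial_t w$, $v$ obtained previously and standard elliptic regularity plus a two-dimensional Sobolev embedding. The only minor point to watch is ensuring that $v(t)\in L_2(\Omega)$ uniformly in time, which follows trivially from the $L_3$-bound of Lemma~\ref{lem4.6} on the bounded domain $\Omega$.
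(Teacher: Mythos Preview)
Your bound on $\|\Delta w\|_{L_2(\Omega)}$ is identical to the paper's: rearrange \eqref{a3PL}, apply the triangle inequality, and feed in Lemmas~\ref{lem4.5} and~\ref{lem4.6}. For the $W^1_3$-estimate, however, you take a different route. You upgrade the $\Delta w$-bound to a full $W^2_2$-estimate via the Calder\'on--Zygmund inequality (exactly as in Corollary~\ref{cor3.4}) and then invoke the two-dimensional embedding $W^2_2(\Omega)\hookrightarrow W^1_3(\Omega)$, which the paper itself cites in the proof of Lemma~\ref{lem3.6}. The paper instead argues directly from Duhamel's formula for \eqref{a3PL}, using the smoothing of the heat semigroup together with the uniform $L_3$-bound on $v$ from Lemma~\ref{lem4.6}. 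Both arguments are correct. Yours is more elementary and recycles tools already in place; the paper's semigroup estimate is consistent with the technique used repeatedly elsewhere (Corollary~\ref{cor3.7}, the proof of Theorem~\ref{theo1.2}) and avoids passing through the intermediate $W^2_2$-bound. One small caveat: the Calder\'on--Zygmund estimate as stated in Corollary~\ref{cor3.4} requires $w(t)\in W^2_{2,\mb}(\Omega)$, which Theorem~\ref{theo1.1} guarantees only for $t>0$; at $t=0$ the bound is immediate from $w_0\in W^2_2(\Omega)$ anyway, so this is harmless.
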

\begin{proof}
First, by \eqref{a3PL}, Lemma~\ref{lem4.5}, and Lemma~\ref{lem4.6}, 
\begin{align*}
D \|\Delta w\|_{L_2(\Omega)} & = \|\partial_t w + \alpha w - v\|_{L_2(\Omega)} \le \|\partial_t w\|_{L_2(\Omega)} + \|\alpha w\|_{L_2(\Omega)} + \|v\|_{L_2(\Omega)} \\
& \le (1+\alpha) b_2 + |\Omega|^{1/6} b_3.
\end{align*}
	
Next, let $t>0$. We infer from \eqref{a3PL}, Duhamel's formula, the regularizing effect of the heat semigroup, and Lemma~\ref{lem4.6} that
\begin{align*}
\| w(t)\|_{W^1_3 (\Omega)} & \le b e^{-\alpha t/2} \| w_0\|_{W^1_{3} (\Omega)} + b \int_0^t e^{-\alpha(t-s)/2} (t-s)^{-1/2} \| v(s)\|_{L_{3} (\Omega)}\ \mathrm{d}s \\
& \le b \left( 1 + b_3 \int_0^\infty e^{-\alpha s/2} s^{-1/2}\ \mathrm{d}s \right),
\end{align*}
which completes the proof.
\end{proof}

The previous result allows us to obtain better $L_p$-estimates for $u$.

\begin{lemma}\label{lem4.8}
Assume that \eqref{e4.0.1} is satisfied with $M \in (0, 4\pi(1+\theta)D)$. For any $p \in [1,\infty)$, there is $b_7(p)>0$  such that, for all $t \ge 0$,
	$$\| u(t) \|_{L_p (\Omega)} \le b_7(p)\ .$$
\end{lemma}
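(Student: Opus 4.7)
For $p \in [1,2]$ the claim follows from Lemma~\ref{lem4.1}, Lemma~\ref{lem4.6}, and standard interpolation between $L_1$ and $L_2$, so I focus on $p > 2$. The strategy is to derive a time-independent ODE inequality for a suitable energy functional involving $\|u\|_{L_p}^p$ and $\|v\|_{L_p}^p$, in the spirit of Lemma~\ref{lem3.5} but now leveraging the uniform bounds of Corollary~\ref{cor4.7} and Lemma~\ref{lem4.1}. Testing \eqref{a1PL} by $p u^{p-1}$ and \eqref{a2PL} by $p \theta^{p-1} v^{p-1}$, summing, and exploiting the sign-favourable cross-reaction term $p \int (\theta v - u)(u^{p-1} - \theta^{p-1} v^{p-1}) \le 0$, one obtains for $Y_p := \|u\|_{L_p}^p + \theta^{p-1} \|v\|_{L_p}^p$
$$\frac{\mathrm{d} Y_p}{\mathrm{d} t} + \frac{4(p-1)}{p}\|\nabla u^{p/2}\|_{L_2(\Omega)}^2 \le -(p-1) \int_\Omega u^p \Delta w\ \mathrm{d}x.$$

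By Cauchy--Schwarz and Corollary~\ref{cor4.7}, the right-hand side is bounded by $(p-1) b_6 \|u^{p/2}\|_{L_4(\Omega)}^2$. The two-dimensional Gagliardo--Nirenberg inequality $\|g\|_{L_4}^2 \le C(\|\nabla g\|_{L_2}^{3/2}\|g\|_{L_1}^{1/2} + \|g\|_{L_1}^2)$ applied to $g = u^{p/2}$, combined with the $L_1$--$L_p$ interpolation $\|u\|_{L_{p/2}} \le M^{1/(p-1)} \|u\|_{L_p}^{(p-2)/(p-1)}$ (which relies on $\|u\|_{L_1} \le M$ from Lemma~\ref{lem4.1}) and Young's inequality, lets me absorb a portion of $\|\nabla u^{p/2}\|_{L_2}^2$ into the left-hand side and reduces the advection term to a sublinear power of $\|u\|_{L_p}^p$. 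A further application of the Gagliardo--Nirenberg inequality $\|g\|_{L_2}^2 \le C(\|\nabla g\|_{L_2}\|g\|_{L_1} + \|g\|_{L_1}^2)$, together with Young's inequality, yields the coercivity estimate $\|u\|_{L_p}^p \le \eta \|\nabla u^{p/2}\|_{L_2}^2 + C(\eta)\|u\|_{L_p}^{\sigma p}$ with $\sigma := (p-2)/(p-1) \in (0,1)$; absorbing the remaining sublinear terms via Young produces
$$\frac{\mathrm{d} Y_p}{\mathrm{d} t} + c_1(p,M,b_6)\|u\|_{L_p}^p \le c_2(p,M,b_6).$$

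To handle the fact that $Y_p$ contains $\|v\|_{L_p}^p$, which is not directly dissipated by the above, I superpose an ODE estimate derived from \eqref{a2PL}: testing with $p v^{p-1}$ and applying Young's inequality with parameter $\theta/2$ gives
$$\frac{\mathrm{d}}{\mathrm{d} t}\|v\|_{L_p}^p + \frac{p \theta}{2}\|v\|_{L_p}^p \le C(p,\theta)\|u\|_{L_p}^p.$$
Forming $\tilde{Y}_p := Y_p + \lambda \|v\|_{L_p}^p$ with $\lambda > 0$ small enough that $\lambda C(p,\theta) < c_1/2$, the two inequalities combine into $\frac{\mathrm{d}\tilde{Y}_p}{\mathrm{d} t} + c_3 \tilde{Y}_p \le c_2$, and Gronwall's inequality delivers the uniform bound on $\|u\|_{L_p}$ and $\|v\|_{L_p}$. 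The main obstacle is the absence of direct dissipation for $\|v\|_{L_p}^p$ in the combined identity coming from Lemma~\ref{lem3.5}; the superposition with the $v$-ODE restores it at the cost of an $\|u\|_{L_p}^p$ term that is readily absorbed by the coercivity provided by the Gagliardo--Nirenberg inequalities.
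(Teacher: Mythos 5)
Your proposal is correct and the overall skeleton mirrors the paper's: test \eqref{a1PL} against $pu^{p-1}$ and \eqref{a2PL} against a suitable multiple of $v^{p-1}$, use the sign of the cross-reaction term $(\theta v-u)(u^{p-1}-(\theta v)^{p-1})\le 0$, control the chemotaxis term via $\|\Delta w\|_{L_2}\le b_6$ and Gagliardo–Nirenberg, then close an ODE for a combined Lyapunov quantity. The genuine difference is where you anchor the Gagliardo–Nirenberg interpolation. The paper uses the $L_2$-bound $\|u\|_{L_2}\le b_3$ from Lemma~\ref{lem4.6}, so that $\|u^{p/2}\|_{L_{4/p}}=\|u\|_{L_2}^{p/2}$ is directly a constant and no sublinearity argument is needed: both GN applications produce bounded quantities on the spot. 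You instead anchor at $L_1$ via the mass bound $\|u\|_{L_1}\le M$ (Lemma~\ref{lem4.1}), which makes the $p>2$ case self-contained in the sense that Lemma~\ref{lem4.6} is not needed; the price is that the lower-order GN term becomes $\|u\|_{L_{p/2}}^{p/2}\lesssim\|u\|_{L_p}^{p\sigma/2}$ with $\sigma=(p-2)/(p-1)\in(0,1)$, and you must invoke one extra Young absorption of the resulting sublinear power $\|u\|_{L_p}^{p\sigma}$ to close the estimate. This is correct (the strict inequality $\sigma<1$ holds for every finite $p>2$), just a step longer than the paper's route. Your separate superposition of the $v$-ODE with the small parameter $\lambda$ achieves exactly what the paper gets by choosing the coefficient $1+\theta^{p-1}$ rather than $\theta^{p-1}$ for $\|v\|_{L_p}^p$ from the start; this is a cosmetic reorganization. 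In short: same strategy, a slightly more elementary (mass-only) GN anchor at the cost of one additional absorption step.
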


\begin{proof}
We fix $p > 2$. Using \eqref{a1PL}, \eqref{bPL}, H\"{o}lder's inequality, and Corollary~\ref{cor4.7}, we have
\begin{align}\label{e4.8.1}
\frac{1}{p} \frac{\mathrm{d}}{\mathrm{d}t} \| u \|_{L_p (\Omega)}^p &= -(p-1) \int_\Omega u^{p-2} |\nabla u|^2\ \mathrm{d}x + (p-1) \int_\Omega u^{p-1} \nabla u \cdot \nabla w\ \mathrm{d}x + \int_\Omega u^{p-1} (\theta v - u)\ \mathrm{d}x \nonumber \\
& = -\frac{4(p-1)}{p^2} \| \nabla (u^{p/2}) \|_{L_2 (\Omega)}^2 - \frac{p-1}{p} \int_\Omega u^p \Delta w\ \mathrm{d}x + \int_\Omega u^{p-1} (\theta v - u)\ \mathrm{d}x \nonumber \\
& \le -\frac{4(p-1)}{p^2} \| \nabla (u^{p/2}) \|_{L_2 (\Omega)}^2 + \frac{p-1}{p} b_6 \| u^{p/2} \|_{L^4(\Omega)}^2 + \int_\Omega u^{p-1} (\theta v - u)\ \mathrm{d}x .
\end{align}
Also, it follows from \eqref{a2PL} and Young's inequality that
\begin{align}\label{z4}
\frac{1+\theta^{p-1}}{p} \frac{\mathrm{d}}{\mathrm{d}t} \| v \|_{L_p (\Omega)}^p & = - \int_\Omega (\theta v)^{p-1} (\theta v - u)\ \mathrm{d}x - \theta \|v\|_{L_p(\Omega)}^p + \int_\Omega u v^{p-1}\ \mathrm{d}x \nonumber \\
& \le - \int_\Omega (\theta v)^{p-1} (\theta v - u)\ \mathrm{d}x - \frac{\theta}{p} \|v\|_{L_p(\Omega)}^p + \frac{1}{p \theta^{p-1}} \|u\|_{L_p(\Omega)}^p.
\end{align}
Next, in view of the Gagliardo-Nirenberg inequality (see, e.g., \cite[Lemma~2.3]{LL2016} for a version involving $L_q$-spaces for $q >0$), $p>2$, Lemma~\ref{lem4.6}, and Young's inequality, we further obtain 
\begin{align}\label{e4.8.2}
\frac{p-1}{p} b_6 \| u^{p/2} \|_{L_4 (\Omega)}^2  & \le b(p) \left( \| \nabla (u^{p/2}) \|_{L_2 (\Omega)}^{(p-1)/p} \| u^{p/2} \|_{L_{4/p} (\Omega)}^{1/p} + \| u^{p/2} \|_{L_{4/p} (\Omega)}  \right)^2 \nonumber \\
&\le b(p) \left( \| \nabla (u^{p/2}) \|_{L_2 (\Omega)}^{2(p-1)/p} \|u\|_{L_2(\Omega)} + \|u\|_{L_2(\Omega)}^p  \right) \nonumber \\
&\le b(p) \left( b_3 \| \nabla (u^{p/2}) \|_{L_2 (\Omega)}^{2(p-1)/p} + b_3^p \right)  \nonumber \\
&\le \frac{2(p-1)}{p^2} \| \nabla (u^{p/2}) \|_{L_2 (\Omega)}^2 + b(p).
\end{align}
Combining \eqref{e4.8.1}, \eqref{z4}, and \eqref{e4.8.2} and using the monotonicity of $z\mapsto z^{p-1}$ on $[0,\infty)$, we deduce that
\begin{equation}\label{z5}
\frac{1}{p} \frac{\mathrm{d}y}{\mathrm{d}t} + \frac{2(p-1)}{p^2} \| \nabla (u^{p/2}) \|_{L_2(\Omega)}^2 + \frac{\theta}{p} \|v\|_{L_p(\Omega)}^p \le b_8(p) \left( 1 + \| u \|_{L_p (\Omega)}^p \right)\ , 
\end{equation}
with $y:= \| u \|_{L_p (\Omega)}^p + (1+\theta^{p-1}) \| v \|_{L_p (\Omega)}^p$. Using once more Lemma~\ref{lem4.6} and Gagliardo-Nirenberg's and Young's inequalities gives
\begin{align*}
\| u\|_{L^p(\Omega)}^p & = \| u^{p/2} \|_{L_2 (\Omega)}^2 \le b(p) \left( \| \nabla (u^{p/2}) \|_{L_2 (\Omega)}^{(p-2)/p} \| u^{p/2} \|_{L_{4/p} (\Omega)}^{2/p} + \| u^{p/2} \|_{L_{4/p} (\Omega)}  \right)^2 \\
& \le b(p) \left( \| \nabla (u^{p/2}) \|_{L_2 (\Omega)}^{2(p-2)/p} \| u^{p/2} \|_{L_{4/p} (\Omega)}^{4/p} + \| u^{p/2} \|_{L_{4/p} (\Omega)}^2  \right)\\
& \le b(p) \left( b_3^2 \| \nabla (u^{p/2}) \|_{L_2 (\Omega)}^{2(p-2)/p} + b_3^p \right) \\
& \le \frac{(p-1)}{p^2 b_8(p)} \| \nabla (u^{p/2}) \|_{L_2 (\Omega)}^2 + b(p)\ ,
\end{align*}
and we infer from \eqref{z5} that
\begin{align*}
& \frac{1}{p} \frac{\mathrm{d}y}{\mathrm{d}t} + \frac{p-1}{p^2} \| \nabla (u^{p/2}) \|_{L_2(\Omega)}^2 + b_8(p) \| u\|_{L^p(\Omega)}^p + \frac{\theta}{p} \|v\|_{L_p(\Omega)}^p \\
& \le \frac{1}{p} \frac{\mathrm{d}y}{\mathrm{d}t} + \frac{2(p-1)}{p^2} \| \nabla (u^{p/2}) \|_{L_2(\Omega)}^2 + \frac{\theta}{p} \|v\|_{L_p(\Omega)}^p + b(p) \\
& \le b_8(p) \left( 1 + \| u \|_{L_p (\Omega)}^p \right) + b(p) \le \frac{(p-1)}{p^2} \| \nabla (u^{p/2}) \|_{L_2 (\Omega)}^2 + b(p)\ .
\end{align*}
Hence, 
\begin{equation*}
\frac{\mathrm{d}y}{\mathrm{d}t} + \min\left\{ p b_8(p) , \frac{\theta}{1+\theta^{p-1}} \right\} y \le b(p)\ , \qquad t\ge 0\ .
\end{equation*}
Integration with respect to time of the above differential inequality completes the proof, after noticing that $u_0\in L_p (\Omega)$ due to the continuous embedding of $W^1_3 (\Omega)$ in $L_\infty(\Omega)$.
\end{proof}

We are now in a position to prove Theorem~\ref{theo1.2}.

\begin{proof}[Proof of Theorem~\ref{theo1.2}]

\noindent (a) We fix $p\in (2,3)$ and $\xi\in (2/p,1)$. Since
\begin{equation*}
\left( W_{p,\mb}^{-1}(\Omega) , W_{p,\mb}^1(\Omega) \right)_{(1+\xi)/2,p} \doteq W_{p,\mb}^\xi(\Omega)
\end{equation*}
by \cite[Theorem~7.2]{Am1993} and writing \eqref{a1PL} as
\begin{equation*}
\partial_t u - \Delta u + u = \theta v - \mathrm{div}(u\nabla w) \qquad\;\; \text{ in }\;\; (0,\infty)\times \Omega,
\end{equation*} 
we infer from Duhamel's formula and \cite[Theorem~V.2.1.3]{Am1995} that, for $t\ge 0$, 
\begin{align*}
\|u(t)\|_{W_p^\xi(\Omega)} & \le b e^{-t/2} \|u_0\|_{W_p^\xi(\Omega)} + b \int_0^t (t-s)^{-(1+\xi)/2} e^{-(t-s)/2} \| \theta v(s) - \mathrm{div}(u\nabla w)(s) \|_{W_{p,\mb}^{-1}(\Omega)}\ \mathrm{d}s \\ 
& \le b \|u_0\|_{W_3^1(\Omega)} + b \int_0^t (t-s)^{-(1+\xi)/2} e^{-(t-s)/2} \left( \|v(s)\|_{L_p(\Omega)} + \|(u\nabla w)(s) \|_{L_p(\Omega)} \right)\ \mathrm{d}s.
\end{align*}
We next deduce from Lemma~\ref{lem4.6}, Corollary~\ref{cor4.7}, Lemma~\ref{lem4.8}, and H\"older's inequality that 
\begin{align*}
\|u(t)\|_{W_p^\xi(\Omega)} & \le b + b \int_0^t (t-s)^{-(1+\xi)/2} e^{-(t-s)/2} \Big( |\Omega|^{(3-p)/3} \|v(s)\|_{L_3(\Omega)} \\ 
& \hspace{5cm} + \| u(s)\|_{L_{3p/(3-p)}(\Omega)} \| \nabla w(s)\|_{L_3(\Omega)} \Big)\ \mathrm{d}s \\
& \le b + b \left[ b_3 + b_6 b_7(3p/(3-p)) \right] \int_0^t s^{-(1+\xi)/2} e^{-s/2}\ \mathrm{d}s \le b.
\end{align*}
Since $\xi>2/p$, the space $W_p^\xi(\Omega)$ is continuously embedded in $L_\infty(\Omega)$ and we deduce from the above estimate that
\begin{equation*}
\| u(t)\|_{L_\infty (\Omega)} \le b_9, \qquad t\ge 0.
\end{equation*}
Moreover, from \eqref{a2PL} and the comparison principle, we obtain
\begin{equation*}
\| v(t)\|_{L_\infty (\Omega)} \le \max \left\{ \frac{b_9}{\theta}, \| v_0 \|_{L_\infty (\Omega)} \right\}, \qquad t\ge 0,
\end{equation*}
which, together with Corollary~\ref{cor4.7} and the continuous embedding of $W_3^1(\Omega)$ in $L_\infty(\Omega)$, completes the proof of part~(a).

\medskip

\noindent (b) As $w$ is radially symmetric, an improved version of Proposition~\ref{prop4.3} is valid. Namely, in view of \cite[Theorem~2.1]{NSY1997}, for any $\eta >0$, there is $K(\eta) >0$ 
		  depending only on $\eta$ and $\Omega$ such that
		  $$\int_\Omega e^{w} \ \mathrm{d}x \le K(\eta) \, \exp \left( \left(\eta + \frac{1}{16\pi} \right)\| \nabla w\|_{L_2(\Omega)}^2 + \frac{2\| w\|_{L_1(\Omega)}}{|\Omega|} \right).$$
		We then proceed as in the derivation of \eqref{e4.4.5} with the help of this estimate with $\eta = (8\pi(1+\theta)D -M)/(32\pi M)$ and \eqref{e4.1.3} to deduce that
			\begin{align*}
	  \ml (u,v,w) & \ge -M  \ln \left( \|e^w\|_{L_1(\Omega)} \right) + L\left( \|u\|_{L_1(\Omega)} \right) 
		+ L_\theta\left( \|v\|_{L_1(\Omega)} \right) + (|\Omega|-1) \left( 1+ \frac{1}{\theta} \right)\\
		& \quad + \frac{1+\theta}{2} \left( D \|\nabla w\|_{L_2 (\Omega)}^2 + \alpha \|w\|_{L_2 (\Omega)}^2 \right) + \frac{1}{2} \|D\Delta w - \alpha w + v\|_{L_2 (\Omega)}^2  \\
		& \ge \frac{8\pi(1+\theta)D -(1+ 16\pi \eta) M}{16\pi} \|\nabla w\|_{L_2 (\Omega)}^2 + \frac{\alpha(1+\theta)}{2} \|w\|_{L_2 (\Omega)}^2  \\
		& \quad + \frac{1}{2} \|D\Delta w - \alpha w + v\|_{L_2 (\Omega)}^2 - M \ln (K(\eta)) - \frac{2M}{|\Omega|} \|w\|_{L_1 (\Omega)} + (|\Omega|-1) \left( 1+ \frac{1}{\theta} \right) \\
		&\ge \frac{8\pi(1+\theta)D -M}{32\pi} \|\nabla w\|_{L_2 (\Omega)}^2 + \frac{\alpha(1+\theta)}{2} \|w\|_{L_2 (\Omega)}^2 + \frac{1}{2} \|D\Delta w - \alpha w + v\|_{L_2 (\Omega)}^2 - b\ .
	\end{align*}
	Using this improved version of \eqref{e4.4.2} in the remaining part of Section~\ref{sec4}, we finish the proof of part~(b). 
\end{proof}

\section{Unbounded solutions for large mass}\label{sec5}

Given $M>0$, we denote by $\ms_M$ the set of nonnegative stationary solutions $(u_\ast, v_\ast,w_\ast) \in W^2_{2, \mb} (\Omega; \RR^3)$ to \eqref{aPL} satisfying $\| u_\ast +v_\ast \|_{L_1 (\Omega)} = M$. In view of \eqref{a2PL}, this requires $u_\ast = \theta v_\ast$, which implies, together with \eqref{a1PL}, that
$$u_\ast = c \frac{e^{w_\ast}}{\| e^{w_\ast}\|_{L_1(\Omega)}}$$ for some $c > 0$, which is determined by the mass constraint. Hence, we define $\ms_M$ in the following way:

$(u_\ast, v_\ast,w_\ast) \in \ms_M$ if
\begin{align}
  & (u_\ast, v_\ast,w_\ast) \in W^2_{2, \mb} (\Omega; \RR^3), \qquad u_\ast, v_\ast, w_\ast \ge 0 \mbox{ in } \Omega\ , \label{e5.0.1} \\
  & u_\ast = \frac{\theta M}{\theta +1} \frac{e^{w_\ast}}{\| e^{w_\ast}\|_{L_1(\Omega)}}\ , \qquad 
  v_\ast = \frac{M}{\theta +1} \frac{e^{w_\ast}}{\| e^{w_\ast}\|_{L_1(\Omega)}}\ , \label{e5.0.2} \\
  &-D \Delta w_\ast + \alpha w_\ast = \frac{M}{\theta +1} \frac{e^{w_\ast}}{\| e^{w_\ast}\|_{L_1(\Omega)}} \quad\mbox{ in } \Omega\ , \qquad 
  \nabla w_\ast \cdot \mathbf{n} =0 \quad\mbox{ on } \partial \Omega\ . \label{e5.0.3}
\end{align}

As in \cite{Ho2002, HW2001, SS2001}, we begin with a lower bound for the Liapunov function $\mathcal{L}$ on $\mathcal{S}_M$ for appropriate values of the mass $M$.
	
\begin{proposition}\label{prop5.1}
  \begin{enumerate}
   \item[(a)] If $M \in (4\pi(1+\theta)D, \infty) \setminus \left(4\pi(1+\theta)D \NN \right)$, then 
     $$\mu_M := \inf\limits_{(u_\ast, v_\ast,w_\ast) \in \ms_M} \ml (u_\ast, v_\ast,w_\ast) > -\infty\ .$$
   \item[(b)] If $\Omega = B_R(0)$ for some $R>0$ and $M \in (8\pi(1+\theta)D, \infty)$, then 
     $$\mu_M := \inf\limits_{(u_\ast, v_\ast,w_\ast) \in \ms_{M, rad}} \ml (u_\ast, v_\ast,w_\ast) > -\infty\ ,$$
     where $\ms_{M, rad} := \{ (u_\ast, v_\ast,w_\ast) \in \ms_M \: : \: u_\ast, v_\ast, w_\ast \mbox{ are radially symmetric}\}$. 
  \end{enumerate}
\end{proposition}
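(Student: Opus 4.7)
The strategy is to adapt to the present splitted-population setting the approach developed in \cite{La2019} for a related Keller--Segel system, taking advantage of the fact that on $\ms_M$ the last term in the Liapunov functional \eqref{dPL} vanishes by \eqref{e5.0.3}, and that $u_\ast = \theta v_\ast$ by \eqref{e5.0.2}.

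First, I would compute $\ml$ in closed form on $\ms_M$. Using $u_\ast = \theta v_\ast$, the pointwise identity
$L(u_\ast) + L_\theta (v_\ast) = (u_\ast + v_\ast)\ln u_\ast - (u_\ast+v_\ast) + 1 + 1/\theta$
combined with the fact that $\ln u_\ast - w_\ast$ equals the constant $\ln(\theta M/(1+\theta)) - \ln \|e^{w_\ast}\|_{L_1(\Omega)}$ yields, after integration over $\Omega$, the explicit formula
\begin{equation*}
\ml (u_\ast,v_\ast,w_\ast) = \gamma_0 - M\ln \|e^{w_\ast}\|_{L_1(\Omega)} + \frac{1+\theta}{2}\left( D\|\nabla w_\ast\|_{L_2(\Omega)}^2 + \alpha\|w_\ast\|_{L_2(\Omega)}^2\right),
\end{equation*}
where $\gamma_0 = \gamma_0(M,\theta,|\Omega|) \in \RR$ is an explicit constant. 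Testing \eqref{e5.0.3} against $1$ also provides the uniform bound $\|w_\ast\|_{L_1(\Omega)} = M/(\alpha(1+\theta))$ for any $(u_\ast,v_\ast,w_\ast) \in \ms_M$.

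Next, I would argue by contradiction. Assuming $\mu_M = -\infty$, there is a sequence $(u_{\ast,n},v_{\ast,n},w_{\ast,n}) \in \ms_M$ (respectively $\ms_{M,rad}$) along which $\ml \to -\infty$. To the equation \eqref{e5.0.3}, which is of nonlocal Liouville type with source $v_{\ast,n}$ of fixed $L_1$-mass $M/(1+\theta)$, I would apply the Brezis--Merle concentration-compactness alternative in the form used in \cite{NSY1997} and \cite{La2019}. Along a subsequence one obtains either (i) $(w_{\ast,n})$ bounded in $L_\infty(\Omega)$, in which case $\|e^{w_{\ast,n}}\|_{L_1(\Omega)}$ is bounded above and the displayed formula yields a finite lower bound for $\ml$, contradicting $\ml \to -\infty$; or (ii) $w_{\ast,n} \to -\infty$ locally uniformly on $\overline{\Omega}\setminus \{x_1,\dots,x_k\}$ and $v_{\ast,n} \rightharpoonup \sum_{i=1}^k m_i \delta_{x_i}$ in the sense of measures, with the Ma--Wei mass quantization $m_i = 8\pi D$ if $x_i\in \Omega$ and $m_i = 4\pi D$ if $x_i \in \partial \Omega$.

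To conclude, I would combine the mass conservation $\sum_i m_i = M/(1+\theta)$ with the quantization. For part~(a), this forces $M/(1+\theta) = 4\pi D(2k_i + k_b)$ for integers $k_i, k_b \ge 0$ with $k_i + k_b \ge 1$, hence $M \in 4\pi(1+\theta)D\,\NN$, contradicting the hypothesis. For part~(b), radial symmetry forbids boundary concentration (which, by rotational invariance, could not be isolated at finitely many points), while the only admissible interior blow-up point is the origin, giving a single $m_1 = 8\pi D$ and hence $M = 8\pi(1+\theta)D$, contradicting the strict inequality $M>8\pi(1+\theta)D$. The step I expect to be the main obstacle is the rigorous justification of alternative (ii), specifically the Ma--Wei mass quantization with no residual regular part, for the Neumann problem associated with the nonlocal equation \eqref{e5.0.3}; the extra zeroth-order term $\alpha w_\ast$ and the normalization by $\|e^{w_\ast}\|_{L_1(\Omega)}$ should nevertheless not alter the local blow-up analysis near each concentration point, as already handled for the analogous problem in \cite{La2019}.
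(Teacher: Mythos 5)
Your opening computation agrees with the paper's: both establish the closed form
\begin{equation*}
\ml(u_\ast,v_\ast,w_\ast) = M\ln\!\left(\frac{\theta M}{\theta+1}\right) - M\ln\|e^{w_\ast}\|_{L_1(\Omega)} - M + |\Omega|\left(1+\frac{1}{\theta}\right) + \frac{1+\theta}{2}\left( D\|\nabla w_\ast\|_{L_2(\Omega)}^2 + \alpha\|w_\ast\|_{L_2(\Omega)}^2\right)
\end{equation*}
and the normalization $\|w_\ast\|_{L_1(\Omega)}=M/(\alpha(1+\theta))$. After this point you diverge from the paper. The paper introduces the mean-zero shift $W:=w_\ast - M/(\alpha(1+\theta)|\Omega|)$, rewrites $\ml$ as $\frac{M}{|\Omega|}\mathcal{F}(W)$ plus explicit constants where $\mathcal{F}$ is exactly the Horstmann--Wang free energy, observes that $W$ solves the corresponding normalized elliptic problem \eqref{e5.1.6}, and then cites \cite[Lemma~3.5]{HW2001} (resp.\ \cite[Corollary~3.7 \& Remark~3.8]{HW2001}) for the lower bound $\mathcal{F}(W)\ge -\mu$. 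You instead propose to re-prove that lower bound from scratch, by contradiction plus a Brezis--Merle concentration-compactness alternative and Ma--Wei mass quantization. This is a legitimate and conceptually clean alternative (and is in fact what the cited HW lemma does internally), but it replaces a one-line citation by a substantial amount of elliptic blow-up analysis; the paper's route buys brevity and rigor by off-loading this to the reference, at the price of a change of variables to match the HW normalization.

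As you present it, there are real gaps beyond the one you flag. Your alternative (ii) asserts $w_{\ast,n}\to -\infty$ locally uniformly on $\overline{\Omega}\setminus\{x_1,\dots,x_k\}$; this cannot happen for the un-normalized sequence since $w_{\ast,n}\ge 0$. The concentration alternative must be applied to a shifted sequence such as $\tilde w_n := w_{\ast,n} - \ln\|e^{w_{\ast,n}}\|_{L_1(\Omega)}$, and even then one first has to argue that $\ml\to -\infty$ forces $\|e^{w_{\ast,n}}\|_{L_1(\Omega)}\to\infty$ and $\|\nabla w_{\ast,n}\|_{L_2(\Omega)}\to\infty$ (which follows from the closed form together with the Trudinger--Moser bound and the $L_1$-normalization of $w_\ast$). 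Second, the raw Brezis--Merle alternative only gives $m_i\ge 4\pi D$, not the sharp dichotomy $m_i\in\{4\pi D,8\pi D\}$ with no residual regular part; obtaining that for the Neumann problem with the additional zeroth-order term $\alpha w_\ast$ and the nonlocal normalization requires a Li--Shafrir/Ma--Wei type argument adapted to this setting, and a boundary reflection argument, neither of which is standard off-the-shelf. Third, in part~(b) the claim that the only admissible radial scenario is a single bubble of mass exactly $8\pi D$ at the origin (rather than a tower of masses $8\pi D k$) also needs justification; it is precisely this that makes the radial threshold a single value $8\pi(1+\theta)D$ rather than a sequence. In short, the strategy is sound and would recover the paper's result, but it is incomplete where the paper is complete by citation; unless you are prepared to carry out the full blow-up analysis, you should normalize as the paper does and invoke \cite[Lemma~3.5]{HW2001} and \cite[Corollary~3.7]{HW2001} directly.
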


\begin{proof}
\noindent (a) Let $(u_\ast, v_\ast,w_\ast) \in \ms_M$. Then, in view of \eqref{e5.0.2}, \eqref{e5.0.3}, and the mass constraint $\|u_\ast + v_\ast\|_{L_1(\Omega)} =M$, we deduce from \eqref{dPL} that 
     \begin{align}\label{e5.1.2}
        \ml (u_\ast, v_\ast,w_\ast) &= \int_\Omega \left( u_\ast \ln u_\ast - u_\ast +1 + v_\ast \ln(\theta v_\ast) - v_\ast + \frac{1}{\theta}- (u_\ast+v_\ast)w_\ast \right)\ \mathrm{d}x \nonumber \\
        & \quad + \frac{1+\theta}{2} \left( D \|\nabla w_\ast\|_{L_2 (\Omega)}^2 + \alpha \|w_\ast\|_{L_2 (\Omega)}^2 \right) + \frac{1}{2} \|D\Delta w_\ast- \alpha w_\ast + v_\ast\|_{L_2 (\Omega)}^2 \nonumber \\
        & = \int_\Omega \left( (u_\ast +v_\ast) \ln u_\ast -(u_\ast+v_\ast) w_\ast \right)\ \mathrm{d}x - M + |\Omega| \left( 1 + \frac{1}{\theta} \right) \nonumber \\
        & \quad + \frac{1+\theta}{2} \left( D \|\nabla w_\ast\|_{L_2 (\Omega)}^2 + \alpha \|w_\ast\|_{L_2 (\Omega)}^2 \right) \nonumber \\
        &= \int_\Omega (u_\ast +v_\ast) \left( \ln \left( \frac{\theta M}{\theta+1} \right) - \ln\left( \|e^{w_\ast}\|_{L_1(\Omega)} \right) \right)\ \mathrm{d}x -M + |\Omega| \left( 1+ \frac{1}{\theta} \right) \nonumber \\
        & \quad + \frac{1+\theta}{2} \left( D \|\nabla w_\ast\|_{L_2 (\Omega)}^2 + \alpha \|w_\ast\|_{L_2 (\Omega)}^2 \right) \nonumber \\
        &= M \ln \left( \frac{\theta M}{\theta +1} \right) - M \ln \left(\| e^{w_\ast}\|_{L_1(\Omega)} \right)
        -M + |\Omega| \left( 1+ \frac{1}{\theta} \right) \nonumber \\
        & \quad + \frac{1+\theta}{2} \left( D \|\nabla w_\ast\|_{L_2 (\Omega)}^2 + \alpha \|w_\ast\|_{L_2 (\Omega)}^2 \right) \ .
     \end{align}
     As \eqref{e5.0.1} and \eqref{e5.0.3} imply 
     $$w_\ast \ge 0 \quad\mbox{in } \Omega \qquad\mbox{ and }\qquad \| w_\ast\|_{L_1(\Omega)} = \frac{M}{\alpha(\theta +1)}\ ,$$
     we define 
     \begin{equation}\label{e5.1.3}
       W:= w_\ast - \frac{M}{\alpha(\theta +1)|\Omega|} = w_\ast - \frac{1}{|\Omega|} \int_\Omega w_\ast(x) \ \mathrm{d}x\ . 
     \end{equation}
     Rewriting \eqref{e5.1.2} in terms of $W$ leads to
     \begin{align}\label{e5.1.4}
       \ml (u_\ast, v_\ast,w_\ast) 
       &= \frac{1+\theta}{2} \left( D \|\nabla W\|_{L_2 (\Omega)}^2 + \alpha \|W\|_{L_2 (\Omega)}^2 
       + \alpha|\Omega| \frac{M^2}{\alpha^2(\theta +1)^2|\Omega|^2}\right) \nonumber \\
       & \quad - M \ln \left(\| e^W\|_{L_1(\Omega)} \right) - \frac{M^2}{\alpha(\theta +1)|\Omega|} +  M \ln \left( \frac{\theta M}{\theta +1} \right)
       \nonumber \\ & \quad -M + |\Omega| \left( 1+ \frac{1}{\theta} \right) \nonumber \\
       &= \frac{M}{|\Omega|} \mf (W) - M \ln \left( |\Omega|\right) 
       - \frac{M^2}{2\alpha(\theta +1)|\Omega|} +  M \ln \left( \frac{\theta M}{\theta +1} \right) \nonumber \\
       &\quad -M + |\Omega| \left( 1+ \frac{1}{\theta} \right)\ ,
     \end{align}
     where
     \begin{equation}\label{e5.1.5}
        \mf (W) := \frac{(1+\theta)|\Omega|}{2M} \left( D \|\nabla W\|_{L_2 (\Omega)}^2 + \alpha \|W\|_{L_2 (\Omega)}^2 \right) 
        - |\Omega| \ln \left( \frac{ \| e^W\|_{L_1(\Omega)}}{|\Omega|} \right).
     \end{equation}
     Moreover, in view of \eqref{e5.0.3} and \eqref{e5.1.3}, $W$ is a solution to
     \begin{equation}\label{e5.1.6}
       -D \Delta W + \alpha W = \frac{M}{(\theta +1)|\Omega|} \left( \frac{|\Omega| e^{W}}{\| e^{W}\|_{L_1(\Omega)}} -1 \right) \quad\mbox{ in } \Omega\ , \qquad 
        \nabla W\cdot \mathbf{n} =0 \quad\mbox{ on } \partial \Omega\ ,
     \end{equation}
     along with $\int_\Omega W(x) \ \mathrm{d}x =0$. Hence, due to \eqref{e5.1.5}, \eqref{e5.1.6} along with 
     $M \in (4\pi(1+\theta)D, \infty) \setminus \left(4\pi(1+\theta)D \NN \right)$, we are in a position to apply \cite[Lemma~3.5]{HW2001} and conclude that there
     exists $\mu \ge 0$ which does not depend on $W$ such that 
     \begin{equation}\label{e5.1.7}
        \mf(W) \ge -\mu\ .
     \end{equation}
     Combining the latter with \eqref{e5.1.4} completes the proof of assertion~(a).
    
\medskip
\noindent (b) The proof is the same as that of assertion~(a), we only use \cite[Corollary~3.7 \& Remark~3.8]{HW2001} instead of \cite[Lemma~3.5]{HW2001} to deduce the 
     lower bound \eqref{e5.1.7} for any $M \in (8\pi(1+\theta)D, \infty)$.
\end{proof}

As in \cite{Ho2002, HW2001, SS2001}, the next step is to show that $\mathcal{L}$ is not bounded from below on the set $\mathcal{I}_M$ of initial conditions defined in \eqref{in} as soon as $M$ exceeds a specific threshold value. The argument given below is however more involved, due to the additional positive term in $\mathcal{L}$.
	
\begin{proposition}\label{prop5.2}
Let $M>0$.
	\begin{enumerate}
		\item[(a)] If $M \in (4\pi(1+\theta)D, \infty)$,then 
		$$\inf\limits_{(u, v,w) \in \mathcal{I}_{M}} \mathcal{L}(u,v,w) = -\infty.$$
		\item[(b)] If $\Omega = B_R(0)$ for some $R>0$ and $M \in (8\pi(1+\theta)D, \infty)$, then 
		$$\inf\limits_{(u, v,w) \in \mathcal{I}_{M,rad}} \mathcal{L}(u,v,w) = -\infty,$$
		where $\mathcal{I}_{M, rad} := \{ (u, v,w) \in \mathcal{I}_{M} \: : \: u, v, w \mbox{ are radially symmetric}\}$. 
	\end{enumerate}
\end{proposition}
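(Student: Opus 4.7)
The plan is to exhibit, for each $\varepsilon\in (0,1)$, an explicit triple $(u_\varepsilon,v_\varepsilon,w_\varepsilon)\in \mathcal{I}_M$ along which $\mathcal{L}$ is driven to $-\infty$, using a concentration argument modeled on the one in \cite{HW2001,La2019}. The key idea, absent from the analysis of \cite{La2019}, is that the extra quadratic term $\frac{1}{2}\|D\Delta w-\alpha w+v\|_{L_2(\Omega)}^2$ in $\mathcal{L}$ can be made to vanish identically by coupling $v_\varepsilon$ and $w_\varepsilon$ through the elliptic equation, while splitting the mass in the same ratio $\theta:1$ between $u_\varepsilon$ and $v_\varepsilon$ as for the stationary profiles of Proposition~\ref{prop2.2}. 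Concretely, pick $x_0\in\partial\Omega$ for part~(a) and $x_0=0$ for part~(b), a smooth nonnegative radial bump $\rho\in C_c^\infty(\mathbb{R}^2)$ supported in $B_1(0)$ with $\int_{\mathbb{R}^2}\rho\,\mathrm{d}x=1$, and define
\begin{equation*}
\varphi_\varepsilon(x) := \frac{\varepsilon^{-2}\rho((x-x_0)/\varepsilon)}{\int_\Omega \varepsilon^{-2}\rho((y-x_0)/\varepsilon)\,\mathrm{d}y}, \quad v_\varepsilon := \frac{M}{1+\theta}\varphi_\varepsilon, \quad u_\varepsilon := \theta v_\varepsilon,
\end{equation*}
and let $w_\varepsilon\in W^2_{2,+}(\Omega)$ be the unique (nonnegative, by the maximum principle) solution of $-D\Delta w_\varepsilon+\alpha w_\varepsilon=v_\varepsilon$ with Neumann boundary data. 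Then $(u_\varepsilon,v_\varepsilon,w_\varepsilon)\in \mathcal{I}_M$ (and is radial in case~(b)).

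With this choice, the last term in $\mathcal{L}$ vanishes, and testing the elliptic equation against $w_\varepsilon$ gives $D\|\nabla w_\varepsilon\|_{L_2(\Omega)}^2+\alpha\|w_\varepsilon\|_{L_2(\Omega)}^2=\int_\Omega v_\varepsilon w_\varepsilon\,\mathrm{d}x$. Using $u_\varepsilon=\theta v_\varepsilon$ and $u_\varepsilon+v_\varepsilon=(1+\theta)v_\varepsilon$, an elementary rearrangement reduces the Liapunov functional to
\begin{equation*}
\mathcal{L}(u_\varepsilon,v_\varepsilon,w_\varepsilon) = (1+\theta)\int_\Omega v_\varepsilon \ln(\theta v_\varepsilon)\,\mathrm{d}x - \frac{1+\theta}{2}\int_\Omega v_\varepsilon w_\varepsilon\,\mathrm{d}x + C(M,\theta,\Omega),
\end{equation*}
where $C$ collects the $O(1)$ contributions. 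The first step is then to compute, as $\varepsilon\to 0$, the two leading asymptotics
\begin{equation*}
\int_\Omega v_\varepsilon\ln(\theta v_\varepsilon)\,\mathrm{d}x = \frac{2M}{1+\theta}|\ln\varepsilon|+O(1),
\end{equation*}
which follows from the scaling of $\varphi_\varepsilon$, together with a careful expansion of $\int_\Omega v_\varepsilon w_\varepsilon\,\mathrm{d}x$ via the Neumann Green's function $G$ of $-D\Delta+\alpha$.

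The hard part is the latter asymptotic. Writing $w_\varepsilon(x)=\int_\Omega G(x,y)v_\varepsilon(y)\,\mathrm{d}y$ and using the well-known logarithmic singularity of $G$, namely $G(x,y)=-\kappa(x_0)\ln|x-y|+O(1)$ near $x_0$ with $\kappa(x_0)=\tfrac{1}{\pi D}$ at a smooth boundary point $x_0\in\partial\Omega$ (reflection doubles the interior constant) and $\kappa(0)=\tfrac{1}{2\pi D}$ in the radial/interior case, one obtains
\begin{equation*}
\int_\Omega v_\varepsilon w_\varepsilon\,\mathrm{d}x = \kappa(x_0)\left(\frac{M}{1+\theta}\right)^2 |\ln\varepsilon|+O(1).
\end{equation*}
Inserting both expansions yields, in case~(a),
\begin{equation*}
\mathcal{L}(u_\varepsilon,v_\varepsilon,w_\varepsilon)=\frac{M|\ln\varepsilon|}{2\pi D(1+\theta)}\bigl[4\pi(1+\theta)D-M\bigr]+O(1),
\end{equation*}
which tends to $-\infty$ as soon as $M>4\pi(1+\theta)D$; in case~(b), the factor $\kappa(0)=\tfrac{1}{2\pi D}$ replaces $\tfrac{1}{\pi D}$ and the threshold doubles to $8\pi(1+\theta)D$. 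The main technical obstacle will be to justify the singular expansion of $\int v_\varepsilon w_\varepsilon\,\mathrm{d}x$ with a sharp constant and an $O(1)$ remainder, for which one can exploit the explicit convolution representation together with classical estimates on $G$ (splitting $G$ into its fundamental-solution singular part and a smooth remainder, and using the scaling properties of $\rho_\varepsilon$); all other pieces, including checking $(u_\varepsilon,v_\varepsilon,w_\varepsilon)\in\mathcal{I}_M$ and the regularity of $w_\varepsilon$ through standard elliptic theory, are routine.
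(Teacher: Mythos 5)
Your proposal is correct in outline, and it is a genuinely different route from the one in the paper. The paper does not pick a generic bump and solve the elliptic equation; instead it uses the Moser--Trudinger extremals $\xi_\eta(x)=2\ln\left(\eta/(\eta^2+\pi|x|^2)\right)$, whose exceptional identity $-\Delta\xi_\eta = 8\pi e^{\xi_\eta}$ is exploited twice: first to force the singular part of $D\Delta w_\eta + v_\eta$ to cancel (by the very specific choice $V_\eta = 8\pi D\|e^{\xi_\eta}\|_{L_1(\Omega)}$, which leaves only the bounded $\alpha^2\|\Xi_\eta+\nu_1\|_{L_2}^2$ in the quadratic term), and second to reduce $\mathcal{L}(u_\eta,v_\eta,w_\eta)$ to $\frac{M}{|\Omega|}\mathcal{F}(\Xi_\eta) + O(1)$ where $\mathcal{F}$ is the free-energy functional of Horstmann--Wang, for which the divergence $\mathcal{F}(\Xi_\eta)\to -\infty$ is quoted from \cite[Section~3]{HW2001} and \cite[Lemma~2]{Ho2002}. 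Your idea of simply \emph{defining} $w_\varepsilon$ as the Neumann solution of $-D\Delta w_\varepsilon+\alpha w_\varepsilon = v_\varepsilon$ is conceptually cleaner: the quadratic term vanishes identically, not just asymptotically, and the algebra collapsing $\mathcal{L}$ to $(1+\theta)\int v_\varepsilon\ln(\theta v_\varepsilon) - \frac{1+\theta}{2}\int v_\varepsilon w_\varepsilon + C$ is exactly right (I verified $C = -M+|\Omega|(1+1/\theta)$). The price you pay is that you must then prove the sharp Green's function asymptotics yourself rather than quoting a reference, and this is the one place where your sketch is still loose: at a boundary point the correct decomposition is $G(x,y) = -\tfrac{1}{2\pi D}\left[\ln|x-y|+\ln|x-y^*|\right] + h(x,y)$ with $h$ bounded (not ``fundamental solution plus smooth remainder'' as you write later), and the doubling of the constant to $\tfrac{1}{\pi D}$ comes from the fact that \emph{both} logarithmic kernels, integrated against the concentrating bump $v_\varepsilon\otimes v_\varepsilon$, contribute $\left(\tfrac{M}{1+\theta}\right)^2|\ln\varepsilon| + O(1)$ after the scaling $x=\varepsilon\tilde x$, $y=\varepsilon\tilde y$ (the $O(1)$ remainder uses local integrability of $\ln|\tilde x - \tilde y^*|$, which holds since $|\tilde x - \tilde y^*|$ stays bounded on the rescaled support). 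If you state and prove this Green's function lemma carefully, with the smooth-boundary hypothesis controlling the remainder $h$ near $x_0$, your argument closes; you also need, as you note, to check $\int\rho\ln\rho$ is finite, which is automatic for $\rho\in C_c^\infty$. Your approach has the merit of being self-contained and transparent about where the thresholds $4\pi(1+\theta)D$ and $8\pi(1+\theta)D$ come from; the paper's approach is shorter because it outsources the divergence to a black-box lemma and avoids any explicit Green's function analysis.
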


\begin{proof}

\medskip
 
\noindent (a) As $\partial \Omega$ is smooth, upon a translation and a rotation, we may assume without loss of generality that $0\in \partial\Omega$, $\mathbf{n}(0) = (0,-1)^T$, 
        and that there exist $a_0 \in (0,1)$ and $\zeta \in C^2([-a_0,a_0])$ such that we have 
  \begin{equation*}
    \Omega \cap B_{a_0}(0) = \{ x \in B_{a_0}(0) \: : \: x_2 > \zeta (x_1) \} \quad\mbox{and}\quad 
    \partial\Omega \cap B_{a_0}(0) = \{ x \in B_{a_0}(0) \: : \: x_2 = \zeta (x_1) \}.
  \end{equation*}
  We first claim that there is $\bar{\omega}\in C([0,a_0])$ such that
	\begin{subequations}
	\begin{equation}
	\bar{\omega}(0)=0, \quad \bar{\omega}\ge 0, \label{o1}
	\end{equation}
	and, for all $a\in (0,a_0)$,
	\begin{equation}
	\sigma_a \subset \Omega \cap B_a(0) \subset \Sigma_a, \label{o2}
	\end{equation}
	\end{subequations}
where
\begin{align*}
\sigma_a & := \left\{ x=(r\cos(\omega),r\sin(\omega))\ : \ r\in [0,a), \ \omega\in (\bar{\omega}(a),\pi-\bar{\omega}(a)) \right\}, \\
\Sigma_a & := \left\{ x=(r\cos(\omega),r\sin(\omega))\ : \ r\in [0,a), \ \omega\in [0,\pi+\bar{\omega}(a)) \cup (2\pi-\bar{\omega}(a),2\pi) \right\}.
\end{align*}
Indeed, in view of $0 \in \partial \Omega$ we have $\zeta(0)=0$, while $\mathbf{n}(0) = (0,-1)^T$ implies 
$\zeta^\prime (0) =0$. Hence, with $A := \| \zeta^{\prime \prime} \|_{C ([-a_0,a_0])} /2 \in [0, \infty)$, a Taylor expansion implies 
\begin{equation*}
  |\zeta(s)| = |\zeta(s)-\zeta(0) - \zeta^\prime (0) s| \le A s^2 \le A a |s| \qquad\mbox{for all } s \in [-a,a]
\end{equation*}
and any $a \in (0,a_0]$.
Combining these properties of $\Omega$ and $\zeta$, we deduce that 
\begin{equation*}
 \{ x \in B_a(0) \: : \: x_2 > Aa|x_1| \} \subset \Omega \cap B_a(0) \subset \{ x \in B_a(0) \: : \: x_2 > -Aa|x_1| \}.
\end{equation*}
Hence,  \eqref{o1} and \eqref{o2} are satisfied for any $a \in (0,a_0)$ with the continuous function 
$\bar{\omega} (a) := \arctan(A a)$, $a \in [0,a_0]$.

Next, for $\eta\in (0,1)$ and $x\in\Omega$, define 
\begin{equation*}
\xi_\eta(x) := 2 \ln\left( \frac{\eta}{\eta^2 + \pi |x|^2} \right) \;\;\text{ and }\;\; \Xi_\eta(x) := \xi_\eta(x) - \frac{1}{|\Omega|} \int_\Omega \xi_\eta(y)\ \mathrm{d}y.
\end{equation*}
Clearly, $\xi_\eta$ belongs to $W_3^2(\Omega)$ and, for $(x,\eta)\in \Omega\times (0,1)$, 
\begin{align}
\nabla\xi_\eta(x) & = - \frac{4\pi x}{\eta^2 + \pi |x|^2}, \nonumber \\ 
D^2\xi_\eta(x) & = - \frac{4\pi}{\eta^2 + \pi |x|^2} \mathrm{id} + \frac{8\pi^2}{(\eta^2 + \pi |x|^2)^2} x\otimes x, \nonumber \\
- \Delta\xi_\eta(x) & = \frac{8\pi\eta^2}{(\eta^2 + \pi |x|^2)^2} = 8\pi e^{\xi_\eta(x)}. \label{up1}
\end{align}
In view of \eqref{o1} and $M>4\pi (1+\theta) D > 4\pi D$, we next fix $a\in (0,a_0)$ and $\eta_0\in (0,1)$ sufficiently small such that
\begin{subequations}\label{oo345}
\begin{align}
& \bar{\omega}(a) < \max\left\{ \frac{\pi}{4} ,\frac{M-4\pi D}{32D} \right\}, \label{o3} \\
& \eta_0^2 + \pi a^2 < 1, \label{o4} \\
& \eta_0^2 < \frac{M-4\pi D}{32D |\Omega|} \pi a^4, \label{o5}
\end{align}
\end{subequations}
and derive additional estimates on $\xi_\eta$ for $\eta\in (0,\eta_0)$. First, by \eqref{o2},
\begin{align*}
\left\| e^{\xi_\eta}\right\|_{L_1(\Omega)} & = \int_{\Omega\cap B_a(0)} \frac{\eta^2}{(\eta^2 + \pi |x|^2)^2}\ \mathrm{d}x + \int_{\Omega\cap B_a(0)^c} \frac{\eta^2}{(\eta^2 + \pi |x|^2)^2}\ \mathrm{d}x \\
& \le \int_{\Sigma_a} \frac{\eta^2}{(\eta^2 + \pi |x|^2)^2}\ \mathrm{d}x + \int_{\Omega\cap B_a(0)^c} \frac{\eta^2}{(\eta^2 + \pi a^2)^2}\ \mathrm{d}x \\
& \le \frac{\pi+2\bar{\omega}(a)}{2\pi} \left[ - \frac{\eta^2}{\eta^2 + \pi r^2} \right]_{r=0}^{r=a} + \frac{\eta^2 |\Omega|}{\pi^2 a^4} \\
& \le \frac{1}{2} + \frac{\bar{\omega}(a)}{\pi} + \frac{\eta_0^2 |\Omega|}{\pi^2 a^4},
\end{align*}
and
\begin{align*}
\left\| e^{\xi_\eta}\right\|_{L_1(\Omega)} & \ge \int_{\Omega\cap B_a(0)} \frac{\eta^2}{(\eta^2 + \pi |x|^2)^2}\ \mathrm{d}x \ge \int_{\sigma_a} \frac{\eta^2}{(\eta^2 + \pi |x|^2)^2}\ \mathrm{d}x \\
& =  \frac{\pi-2\bar{\omega}(a)}{2\pi} \left[ - \frac{\eta^2}{\eta^2 + \pi r^2} \right]_{r=0}^{r=a} = \frac{\pi - 2 \bar{\omega}(a)}{2\pi} \frac{\pi a^2}{\eta^2+\pi a^2} \\
& \ge \frac{\pi - 2 \bar{\omega}(a)}{2\pi} \frac{\pi a^2}{\eta_0^2+\pi a^2}.
\end{align*}
Hence, using \eqref{oo345},
\begin{equation}
\frac{\pi a^2}{4} < \left\| e^{\xi_\eta}\right\|_{L_1(\Omega)} < \frac{1}{2} + \frac{M-4\pi D}{16\pi D} = \frac{M+4\pi D}{16\pi D}. \label{oo1}
\end{equation}

We next turn to $\Xi_\eta$ and first derive a lower bound for $\eta\in (0,\eta_0)$. To this end, we compute
\begin{align*}
I_\eta & := \frac{2}{|\Omega|} \int_\Omega \ln\left( \eta^2 + \pi |y|^2 \right)\ \mathrm{d}y \\
& = \frac{2}{|\Omega|} \int_{\Omega\cap B_a(0)} \ln\left( \eta^2 + \pi |y|^2 \right)\ \mathrm{d}y + \frac{2}{|\Omega|} \int_{\Omega\cap B_a(0)^c} \ln\left( \eta^2 + \pi |y|^2 \right)\ \mathrm{d}y \\
& \ge \frac{2}{|\Omega|} \int_{\Omega\cap B_a(0)} \ln\left( \eta^2 + \pi |y|^2 \right)\ \mathrm{d}y + \frac{2}{|\Omega|} \int_{\Omega\cap B_a(0)^c} \ln\left( \pi a^2 \right)\ \mathrm{d}y.
\end{align*}
Since $\eta^2+\pi |y|^2 \le \eta_0^2 + \pi a^2 < 1$ for $(\eta,y)\in (0,\eta_0)\times \Sigma_a$ by \eqref{o4}, we infer from \eqref{o2} and \eqref{o3} that
\begin{align*}
I_\eta & \ge \frac{2}{|\Omega|} \int_{\Sigma_a} \ln\left( \eta^2 + \pi |y|^2 \right)\ \mathrm{d}y + \frac{4}{|\Omega|} \int_{\Omega\cap B_a(0)^c} \ln{a}\ \mathrm{d}y \\
& \ge \frac{\pi + 2\bar{\omega}(a)}{\pi |\Omega|} \left[ (\eta^2+\pi r^2) \ln{(\eta^2+\pi r^2)} - \eta^2 - \pi r^2 +1 \right]_{r=0}^{r=a} + \frac{4}{|\Omega|} \int_\Omega \ln{a}\ \mathrm{d}y \\
& \ge \frac{\pi + 2\bar{\omega}(a)}{\pi |\Omega|} \left( - 2 \eta^2 \ln{\eta} + \eta^2 - 1 \right) - 4 |\ln{a}| \ge - \frac{\pi + 2\bar{\omega}(a)}{\pi |\Omega|} - 4 |\ln{a}| \\
& \ge - \frac{1}{|\Omega|} \left( 1 + \frac{M}{16\pi D} \right) - 4 |\ln{a}|.
\end{align*}
Consequently, for $x\in \Omega$,
\begin{equation}
\Xi_\eta(x) = -2 \ln\left( \eta^2 + \pi |x|^2 \right) + I_\eta \ge - \nu_1, \label{oo2}
\end{equation}
with $R:= \mathrm{diam}(\Omega)/2$ and
\begin{equation*}
 \nu_1 := 2 \ln\left( 1 + 4\pi R^2 \right) + \frac{1}{|\Omega|} \left( 1 + \frac{M}{16\pi D} \right) + 4 |\ln{a}|.
\end{equation*}
Finally,
\begin{align}
\|\Xi_\eta\|_{L_2(\Omega)}^2 & = \int_\Omega \left(  -2 \ln\left( \eta^2 + \pi |x|^2 \right) + I_\eta \right)^2\ \mathrm{d}x = 4 \int_\Omega  \left( \ln\left( \eta^2 + \pi |x|^2 \right) \right)^2\ \mathrm{d}x - |\Omega| I_\eta^2 \nonumber \\
& \le 4 \int_{B_{2R}(0)} \left( \ln\left( \eta^2 + \pi |x|^2 \right) \right)^2\ \mathrm{d}x = 8\pi \int_0^{2R} r \left( \ln\left( \eta^2 + \pi r^2 \right) \right)^2\ \mathrm{d}r\nonumber \\
& = 4 \left[ (\eta^2 + \pi r^2) \left( \ln\left( \eta^2 + \pi r^2 \right) \right)^2 - 2 (\eta^2 + \pi r^2) \ln\left( \eta^2 + \pi r^2 \right) + 2(\eta^2+\pi r^2) \right]_{r=0}^{r=2R} \nonumber \\
& \le \nu_2^2, \label{oo3}
\end{align}
where
\begin{equation*}
\nu_2^2 := 4 \left[ (1 + 4\pi R^2) \left( \ln\left( 1 + 4 \pi R^2 \right) \right)^2 - 2 (1 + 4 \pi R^2) \ln\left( 1 + 4 \pi R^2 \right) + 2(1+4 \pi R^2) \right].
\end{equation*}

Now, for $\eta\in (0,\eta_0)$, we set
\begin{subequations}\label{oo4}
\begin{equation}
u_\eta := U_\eta \frac{e^{\xi_\eta}}{\|e^{\xi_\eta}\|_{L_1(\Omega)}}, \quad v_\eta := V_\eta \frac{e^{\xi_\eta}}{\|e^{\xi_\eta}\|_{L_1(\Omega)}}, \quad w_\eta = \Xi_\eta + \nu_1, \label{oo4a}
\end{equation}
with
\begin{equation}
U_\eta := M - 8\pi D \| e^{\xi_\eta}\|_{L_1(\Omega)}, \quad V_\eta := 8\pi D \| e^{\xi_\eta}\|_{L_1(\Omega)}. \label{oo4b}
\end{equation}
\end{subequations} 
We first observe that \eqref{oo1} and the lower bound on $M$ guarantee that 
\begin{equation}
U_\eta = \|u_\eta\|_{L_1(\Omega)} \in \left[2\pi\theta D ,M \right] \;\;\text{ and }\;\; V_\eta = \|v_\eta\|_{L_1(\Omega)} \in \left[ 2\pi^2 a^2 D, \frac{2+\theta}{2(1+\theta)} M \right], \label{oo9}
\end{equation}
while $w_\eta\ge 0$ by \eqref{oo2}, so that the triple $(u_\eta,v_\eta,w_\eta)$ defined in \eqref{oo4} belongs to $\mathcal{I}_M$. Also, 
\begin{equation*}
u_\eta = U_\eta \frac{e^{w_\eta}}{\|e^{w_\eta}\|_{L_1(\Omega)}} \;\;\text{ and }\;\; v_\eta = V_\eta \frac{e^{w_\eta}}{\|e^{w_\eta}\|_{L_1(\Omega)}}.
\end{equation*}

Next, on the one hand, 
\begin{align}
\int_\Omega u_\eta \ln u_\eta\ \mathrm{d}x &= \ln(U_\eta) \int_\Omega u_\eta\ \mathrm{d}x + \int_\Omega u_\eta w_\eta\ \mathrm{d}x - \ln \left( \| e^{w_\eta}\|_{L_1(\Omega)} \right) \int_\Omega u_\eta\ \mathrm{d}x \nonumber \\
&=  \int_\Omega u_\eta w_\eta\ \mathrm{d}x + U_\eta \ln(U_\eta) - U_\eta\ln \left( \| e^{w_\eta}\|_{L_1(\Omega)} \right) \label{oo5}
\end{align}
and
\begin{align}
\int_\Omega v_\eta \ln( \theta v_\eta)\ \mathrm{d}x &= \ln(\theta V_\eta) \int_\Omega v_\eta\ \mathrm{d}x + \int_\Omega v_\eta w_\eta\ \mathrm{d}x - \ln \left( \| e^{w_\eta}\|_{L_1(\Omega)} \right) \int_\Omega v_\eta\ \mathrm{d}x \nonumber \\
&=  \int_\Omega v_\eta w_\eta\ \mathrm{d}x + V_\eta \ln(\theta V_\eta) - V_\eta\ln \left( \| e^{w_\eta}\|_{L_1(\Omega)} \right). \label{oo6}
\end{align}
On the other hand, by \eqref{up1}, \eqref{oo3}, and \eqref{oo4},
\begin{align}
\|D\Delta w_\eta- \alpha w_\eta + v_\eta\|_{L_2(\Omega)}^2 & = \left\| D\Delta \xi_\eta + V_\eta \frac{e^{\xi_\eta}}{\left\| e^{\xi_\eta} \right\|_{L_1(\Omega)}} - \alpha \Xi_\eta - \alpha \nu_1 \right\|_{L_2(\Omega)}^2 \nonumber \\
& = \left\| D\Delta \xi_\eta + 8\pi D e^{\xi_\eta}  - \alpha \Xi_\eta - \alpha \nu_1 \right\|_{L_2(\Omega)}^2 \nonumber \\
& = \alpha^2 \left\|  \Xi_\eta + \nu_1 \right\|_{L_2(\Omega)}^2  \le 2 \alpha^2 \left( \left\|  \Xi_\eta \right\|_{L_2(\Omega)}^2 + |\Omega| \nu_1^2 \right) \nonumber \\
& \le 2\nu_3^2 := 2 \alpha^2 \left( \nu_2^2 + |\Omega| \nu_1^2 \right). \label{oo7}
\end{align}
We then infer from \eqref{oo5}, \eqref{oo6}, and \eqref{oo7} that
\begin{align}
\mathcal{L}(u_\eta, v_\eta,w_\eta) & = \int_\Omega \left( u_\eta \ln u_\eta - u_\eta +1 + v_\eta \ln(\theta v_\eta) - v_\eta + \frac{1}{\theta}- (u_\eta+v_\eta)w_\eta \right) \nonumber \\
& \quad + \frac{1+\theta}{2} \left( D \|\nabla w_\eta\|_{L_2(\Omega)}^2 + \alpha \|w_\eta\|_{L_2(\Omega)}^2 \right) 
+ \frac{1}{2} \|D\Delta w_\eta- \alpha w_\eta + v_\eta\|_{L_2(\Omega)}^2 \nonumber \\
& \le U_\eta \ln(U_\eta) - U_\eta +|\Omega| + V_\eta \ln(\theta V_\eta) - V_\eta + \frac{|\Omega|}{\theta} - (U_\eta + V_\eta)\ln \left( \| e^{w_\eta}\|_{L_1(\Omega)} \right) \nonumber \\
& \quad + \frac{1+\theta}{2} \left( D \|\nabla \Xi_\eta\|_{L_2(\Omega)}^2 + \alpha \|\Xi_\eta+\nu_1\|_{L_2(\Omega)}^2 \right) + \nu_3^2 \nonumber \\
& = U_\eta \ln(U_\eta) - U_\eta +|\Omega| + V_\eta \ln(\theta V_\eta) - V_\eta + \frac{|\Omega|}{\theta} - M \ln \left( \| e^{\Xi_\eta}\|_{L_1(\Omega)} \right) - M \nu_1 \nonumber \\
& \quad + \frac{1+\theta}{2} \left( D \|\nabla \Xi_\eta\|_{L_2(\Omega)}^2 + \alpha \|\Xi_\eta\|_{L_2(\Omega)}^2  + \alpha |\Omega| \nu_1^2 \right)  + \nu_3^2 \nonumber \\
& = \frac{M}{|\Omega|} \mathcal{F}(\Xi_\eta) + \mathcal{R}_\eta, \label{oo8}
\end{align}
where
\begin{equation*}
\mathcal{F}(\Xi_\eta) = \frac{(1+\theta)|\Omega|}{2M} \left( D \|\nabla \Xi_\eta\|_{L_2(\Omega)}^2 + \alpha \|\Xi_\eta\|_{L_2(\Omega)}^2 \right) - |\Omega| \ln \left( \frac{ \| e^{\Xi_\eta}\|_{L_1(\Omega)}}{|\Omega|} \right),
\end{equation*}
see \eqref{e5.1.5}, and
\begin{align*}
\mathcal{R}_\eta & := - M \ln(|\Omega|) + U_\eta \ln(U_\eta) - U_\eta +|\Omega| + V_\eta \ln(\theta V_\eta) - V_\eta + \frac{|\Omega|}{\theta} \\
& \quad - M \nu_1 + \frac{(1+\theta) \alpha |\Omega|}{2} \nu_1^2  + \nu_3^2.
\end{align*}
According to \cite[Section~3]{HW2001}, 
\begin{equation*}
\lim_{\eta\to 0} \mathcal{F}(\Xi_\eta) = -\infty,
\end{equation*}
while \eqref{oo9} ensures that $\sup_{\eta\in (0,\eta_0)} \mathcal{R}_\eta<\infty$. In view of these properties, it readily follows from \eqref{oo8} that $\mathcal{L}(u_\eta,v_\eta,w_\eta)\to -\infty$ as $\eta\to 0$, as claimed.

\medskip

\noindent (b) We recall that $\Omega=B_R(0)$ in that case. As above, for $\eta\in (0,1)$ and $x\in\Omega$, we define 
\begin{equation*}
\xi_\eta(x) := 2 \ln\left( \frac{\eta}{\eta^2 + \pi |x|^2} \right)  \;\;\text{ and }\;\; \Xi_\eta(x) := \xi_\eta(x)  - \frac{1}{|\Omega|} \int_\Omega \xi_\eta(y)\ \mathrm{d}y.
\end{equation*}
Then $\xi_\eta\in W_3^2(\Omega)$ and 
\begin{align*}
\left\| e^{\xi_\eta} \right\|_{L_1(\Omega)} & = \int_\Omega \frac{\eta^2}{(\eta^2 + \pi |x|^2)^2} \ \mathrm{d}x = \eta^2 \int_0^R \frac{2\pi r}{(\eta^2 + \pi r^2)^2} \ \mathrm{d}r \nonumber \\
& = - \left[ \frac{\eta^2}{\eta^2+\pi r^2}  \right]_{r=0}^{r=R} = \frac{\pi R^2}{\eta^2+\pi R^2}.
\end{align*}
Hence,
\begin{equation}
\frac{\pi R^2}{1 + \pi R^2} \le \left\| e^{\xi_\eta} \right\|_{L_1(\Omega)} \le 1. \label{up4}
\end{equation}
Furthermore,
\begin{align}
I_\eta & := \frac{2}{\pi R^2} \int_\Omega \ln\left( \eta^2 + \pi |y|^2 \right)\ \mathrm{d}y = \frac{4}{R^2} \int_0^R \ln\left( \eta^2 + \pi r^2 \right) r\ \mathrm{d}r \nonumber \\
& = \frac{2}{\pi R^2} \left[ \left( \eta^2 + \pi r^2 \right) \ln\left( \eta^2 + \pi r^2 \right) - \left( \eta^2 + \pi r^2 \right) + 1 \right]_{r=0}^{r=R} \nonumber \\
& \ge \frac{2}{\pi R^2} \left( -2 \eta^2 \ln{\eta} + \eta^2 - 1 \right) \ge - \frac{2}{\pi R^2} , \label{up2}
\end{align}
so that
\begin{equation}
\Xi_\eta(x) \ge - 2 \ln\left( \eta^2 + \pi R^2 \right) + I_\eta \ge - \nu_4 := - 2 \ln\left( 1 + \pi R^2 \right) - \frac{2}{\pi R^2}. \label{up3}
\end{equation}
Finally, 
\begin{align}
\|\Xi_\eta\|_{L_2(\Omega)}^2 & = \int_\Omega \left[ - 2 \ln\left( \eta^2 + \pi |x|^2 \right) + I_\eta \right]^2\ \mathrm{d}x = 4 \int_\Omega \left( \ln\left( \eta^2 + \pi |x|^2 \right) \right)^2\ \mathrm dx - |\Omega| I_\eta^2 \nonumber \\
& \le 4 \left[ \left( \eta^2 + \pi r^2 \right) \left[\ln\left( \eta^2 + \pi r^2 \right)\right]^2 - 2 \left( \eta^2 + \pi r^2 \right) \ln\left( \eta^2 + \pi r^2 \right) + 2\left( \eta^2 + \pi r^2 \right) \right]_{r=0}^{r=R} \nonumber \\
& \le \nu_5^2, \label{up5}
\end{align}
where
\begin{equation*}
\nu_5^2 := 4  \left[ \left( 1^2 + \pi R^2 \right) \left[\ln\left( 1 + \pi R^2 \right)\right]^2 - 2 \left( 1 + \pi R^2 \right) \ln\left( 1 + \pi R^2 \right) + 2\left( 1 + \pi R^2 \right) \right].
\end{equation*}

Now, as above, we set 
\begin{subequations}\label{up6}
\begin{equation}
u_\eta := U_\eta \frac{e^{\xi_\eta}}{\|e^{\xi_\eta}\|_{L_1(\Omega)}}\ , \quad v_\eta := V_\eta \frac{e^{\xi_\eta}}{\|e^{\xi_\eta}\|_{L_1(\Omega)}}\ , \ w_\eta = \Xi_\eta + \nu_4, \label{up6a}
\end{equation}
with
\begin{equation}
U_\eta := M - 8\pi D \| e^{\xi_\eta}\|_{L_1(\Omega)}, \quad V_\eta := 8\pi D \| e^{\xi_\eta}\|_{L_1(\Omega)}, \label{up6b}
\end{equation}
\end{subequations}
and deduce from \eqref{up4} and the lower bound on $M$ that
\begin{equation}
U_\eta\in \left[ \frac{\theta M}{1+\theta}, M \right]\;\;\text{ and }\;\; V_\eta\in \left[ \frac{8 \pi^2 R^2 D}{1+\pi R^2}, \frac{M}{1+\theta} \right]. \label{up100}
\end{equation}
In particular, owing to \eqref{up6}, \eqref{up100}, and the regularity of $\xi_\eta$, the triple $(u_\eta,v_\eta,w_\eta)$ belongs to $\mathcal{I}_{M,rad}$ for all $\eta\in (0,1)$. We next compute $\mathcal{L}(u_\eta, v_\eta,w_\eta)$ as in the previous case and argue as in the proof of \eqref{oo8} with the help of \eqref{up1}, \eqref{up3}, and \eqref{up5} to obtain that
\begin{equation}
\mathcal{L}(u_\eta, v_\eta,w_\eta) \le \frac{M}{|\Omega|} \mathcal{F}(\Xi_\eta) + \mathcal{R}_\eta \ , \label{up10}
\end{equation}
where
\begin{equation*}
\mathcal{F}(\Xi_\eta) = \frac{(1+\theta)|\Omega|}{2M} \left( D \|\nabla \Xi_\eta\|_{L_2(\Omega)}^2 + \alpha \|\Xi_\eta\|_{L_2(\Omega)}^2 \right) - |\Omega| \ln \left( \frac{ \| e^{\Xi_\eta}\|_{L_1(\Omega)}}{|\Omega|} \right)
\end{equation*}
as before and
\begin{align*}
\mathcal{R}_\eta & := - M \ln(|\Omega|) + U_\eta \ln(U_\eta) - U_\eta +|\Omega| + V_\eta \ln(\theta V_\eta) - V_\eta + \frac{|\Omega|}{\theta} \\
& \quad - M \nu_4 + \frac{(1+\theta)\alpha |\Omega|}{2} \nu_4^2 + \alpha^2 \left( \nu_5^2 + |\Omega| \nu_4^2 \right).
\end{align*}
According to \cite[Lemma~2]{Ho2002}, 
\begin{equation*}
\lim_{\eta\to 0} \mathcal{F}(\Xi_\eta) = - \infty\ ,
\end{equation*}
while \eqref{up100} implies that $\sup_{\eta\in (0,1)} \mathcal{R}_\eta < \infty$. It then readily follows from \eqref{up10} and the above properties that $\mathcal{L}(u_\eta,v_\eta,w_\eta) \longrightarrow - \infty$ as $\eta\to 0$ and the proof of (b) is complete.
\end{proof}

\begin{proof}[Proof of Theorem~\ref{theo1.3}]
\noindent (a) Gathering the outcome of Proposition~\ref{prop2.2},  Proposition~\ref{prop5.1}~(a) and Proposition~\ref{prop5.2}~(a), we argue as in \cite{Ho2002, HW2001, La2019, SS2001}, see also \cite{IY2013}, to conclude that, for $M\in (4\pi(1+\theta)D,\infty)\setminus (4\pi (1+\theta) D \mathbb{N})$, there are initial conditions in $\mathcal{I}_M$ for which the first component of the corresponding solution to \eqref{aPL}-\eqref{iPL} cannot be bounded in $L_\infty((0,\infty)\times \Omega)$ and thus infringes \eqref{apbu}.

\medskip

\noindent (b) In that case, combining Proposition~\ref{prop2.2},  Proposition~\ref{prop5.1}~(b) and Proposition~\ref{prop5.2}~(b) with the above mentioned references leads to the claim.
\end{proof}


\bibliographystyle{siam}
\bibliography{MPB}

\end{document}